\documentclass[a4paper, 11pt]{amsart}

\let\oldtocsection=\tocsection

\let\oldtocsubsection=\tocsubsection

\let\oldtocsubsubsection=\tocsubsubsection

\renewcommand{\tocsection}[2]{\hspace{0em}\oldtocsection{#1}{#2}}
\renewcommand{\tocsubsection}[2]{\hspace{2em}\oldtocsubsection{#1}{#2}}
\renewcommand{\tocsubsubsection}[2]{\hspace{2em}\oldtocsubsubsection{#1}{#2}}

\newcommand{\Cn}{\textup{\textrm{\v{c}}}}

\title{{On the squares functor and the Gaitsgory--Rozenblyum conjectures}}
\author{F\'elix Loubaton and Jaco Ruit}
\address{Max-Planck-Institut f\"ur Mathematik, Vivatsgasse 7, Bonn, Germany}
\email{loubaton@mpim-bonn.mpg.de}
\address{Max-Planck-Institut f\"ur Mathematik, Vivatsgasse 7, Bonn, Germany}
\email{ruit@mpim-bonn.mpg.de} 

\usepackage{multicol}
\usepackage{amssymb}
\usepackage{amsfonts}
\usepackage{amsthm}
\usepackage{float}
\usepackage{kpfonts} 
\usepackage[mathscr]{eucal}

\usepackage[english]{babel}

\usepackage{tikz-cd}{}  
\usetikzlibrary{backgrounds}
\usetikzlibrary{patterns}
\usepackage{enumitem}
\usepackage[linktocpage]{hyperref}
\usepackage{aliascnt}
\usepackage{varioref}
\usepackage{xstring}
\usepackage{xcolor}
\usepackage{bbm}
\usepackage{breakurl}
\usepackage{todonotes}
\usepackage{mathdots}
\usepackage{float}

\usepackage[left=3cm,right=3cm,top=3.2cm,bottom=3.2cm]{geometry} 

\newtheorem{thmintro}{Theorem}

\newcommand{\inewtheorem}[2]{
	\newaliascnt{#1}{thmintro}
	\newtheorem{#1}[#1]{#2}
	\aliascntresetthe{#1}
}
\inewtheorem{propintro}{Proposition}
\inewtheorem{corintro}{Corollary}

\newtheorem{theorem}{Theorem}
\newcommand{\jnewtheorem}[2]{
	\newaliascnt{#1}{theorem}
	\newtheorem{#1}[#1]{#2}
	\aliascntresetthe{#1}
}
\numberwithin{theorem}{section}
\jnewtheorem{lemma}{Lemma}
\jnewtheorem{proposition}{Proposition}
\jnewtheorem{corollary}{Corollary}

\theoremstyle{definition}
\jnewtheorem{definition}{Definition}
\jnewtheorem{example}{Example}
\jnewtheorem{remark}{Remark}
\jnewtheorem{construction}{Construction}
\jnewtheorem{notation}{Notation}
\jnewtheorem{convention}{Convention}

\labelformat{theorem}{Theorem #1}
\labelformat{lemma}{Lemma #1}
\labelformat{proposition}{Proposition #1}
\labelformat{corollary}{Corollary #1}
\labelformat{definition}{Definition #1}
\labelformat{example}{Example #1}
\labelformat{section}{Section #1}
\labelformat{subsection}{Subsection #1}
\labelformat{equation}{(#1)}
\labelformat{remark}{Remark #1}
\labelformat{chapter}{Chapter #1}
\labelformat{construction}{Construction #1}  
\labelformat{conjecture}{Conjecture #1}
\labelformat{notation}{Notation #1}
\labelformat{convention}{Convention #1}
\labelformat{thmintro}{Theorem #1}
\labelformat{corintro}{Corollary #1}
\labelformat{propintro}{Corollary #1}

\numberwithin{table}{subsection}
\labelformat{table}{Table #1}

\hypersetup{
	pdftitle={On the squares functor and the Gaitsgory--Rozenblyum conjectures},
	pdfauthor={F\'elix Loubaton and Jaco Ruit},
	bookmarksnumbered=true,     
	bookmarksopen=true,         
	bookmarksopenlevel=1,       
	colorlinks=true,   
	linkcolor = {cyan!60!black},
	urlcolor = {cyan!60!black},
	citecolor = {cyan!60!black},   
	pdfencoding=unicode,      
	pdfstartview=Fit,           
	pdfpagemode=UseOutlines,    
	pdfpagelayout=OneColumn,
	breaklinks
}

\newcommand{\op}{\mathrm{op}}
\newcommand{\co}{\textup{\textrm{2-op}}}
\newcommand{\map}{\mathrm{Map}}
\def\colim{\qopname\relax m{colim}}

\newcommand{\fun}{\mathrm{Fun}}

\newcommand{\Cat}{\mathrm{Cat}}

\renewcommand{\S}{\mathscr{S}}
\renewcommand{\D}{\mathscr{D}}
\renewcommand{\P}{\mathscr{P}}
\newcommand{\Q}{\mathscr{Q}}
\newcommand{\E}{\mathscr{E}}

\newcommand{\Gr}{\mathrm{Gr}}

\newcommand{\tp}{\mathrm{t}}

\newcommand{\DbliCat}{\mathrm{Dbl}\infty\Cat}
\newcommand{\CDbliCat}{\mathrm{CDbl}\infty\Cat}

\newcommand{\PSh}{\mathrm{PSh}}
\newcommand{\Sq}{\mathrm{Sq}}	
\newcommand{\id}{\mathrm{id}}
\newcommand{\vop}{\mathrm{vop}}

\newcommand{\hop}{\mathrm{hop}}
\newcommand{\vrectangle}{{\ooalign{\lower.3ex\hbox{$\sqcup$}\cr\raise.4ex\hbox{$\sqcap$}}}}

\newcommand{\Str}{\mathrm{Str}}

\newcommand{\Filt}{\mathrm{Filt}}

\newcommand{\C}{\mathscr{C}}
\newcommand{\uvar}{ \_ }

\newcommand{\Gaunt}{\mathrm{Gaunt}}

\begin{document}

\begin{abstract}
In the seminal work of Gaitsgory and Rozenblyum on derived algebraic geometry, eight conjectures regarding the theory of $(\infty,2)$-categories are stated. This paper aims to clarify the status of these claims, and to provide a proof for the last remaining open one. Along the way, we demonstrate the universal property of the so-called squares functor, a construction that plays an important role in the $(\infty,2)$-categorical foundations of Gaitsgory--Rozenblyum.
\end{abstract}

\maketitle

\setcounter{tocdepth}{1}

\tableofcontents

\section{Introduction}

In the appendix of the celebrated work of Gaitsgory and Rozenblyum on derived algebraic geometry \cite{GR}, the necessary foundations of $(\infty,2)$-categories are developed. These specifically concern the Gray tensor product, a fundamental operation in the theory of $(\infty,2)$-categories, and the squares functor that provides a non-trivial way to create a double $\infty$-category from a $(\infty,2)$-category by using the Gray tensor product. There are eight statements in the $(\infty,2)$-categorical foundations of \cite{GR} that are conjectured by Gaitsgory and Rozenblyum. Since then, mathematicians have made efforts to provide proofs of these conjectures. The goal of this paper is to clarify the status of the conjectures, as well as to provide a demonstration of the last unproven conjecture, so that we may conclude that all conjectures are now resolved. We will also provide a complete picture of the squares construction by demonstrating its universal property, generalizing a similar result for strict double categories by Grandis and Par\'e \cite{GrandisPare}.

We will continue this introduction by providing an informal overview of the results that we will cover in this paper. The precise statements and definitions can be found in the text.

\subsection{$(\infty,2)$-Categories and double $\infty$-categories.} Throughout this paper, we will study and use $(\infty,2)$-categories and double $\infty$-categories. These are two different generalizations of $\infty$-categories  that both add a notion of non-invertible two-dimensional cells. Imprecisely, an $(\infty,2)$-category $\C$ consists of the following data:
\begin{itemize}
    \item a space of objects,
    \item a space of arrows or $1$-cells $$\begin{tikzcd}
	a & b,
	\arrow["f", from=1-1, to=1-2]
\end{tikzcd}$$
    \item a space of $2$-cells
$$
\begin{tikzcd}
	a & b.
	\arrow[""{name=0, anchor=center, inner sep=0}, "f", bend left=40, from=1-1, to=1-2]
	\arrow[""{name=1, anchor=center, inner sep=0}, "g"', bend right=40, from=1-1, to=1-2]
	\arrow[Rightarrow, from=0, to=1, shorten <=3pt, shorten >=3pt]
\end{tikzcd}$$
\end{itemize}
In contrast, a double $\infty$-category $\P$ has two directions of 1-cells, and consists of the following data:
\begin{itemize}
    \item a space of objects,
    \item a space of horizontal arrows or 1-cells $$\begin{tikzcd}
	a & b,
	\arrow["F", from=1-1, to=1-2]
\end{tikzcd}$$
    \item a space of vertical arrows or 1-cells
$$\begin{tikzcd}
	a \\
	b,
	\arrow["f"', from=1-1, to=2-1]
\end{tikzcd}$$
    \item a space of $2$-cells
$$\begin{tikzcd}
        a \arrow[r,"F"name=f]\arrow[d, "f"'] & b \arrow[d, "g"] \\
        c \arrow[r,"G"'name=t] & d.
        \arrow[from=f,to=t, Rightarrow, shorten <= 6pt, shorten >= 6pt]
    \end{tikzcd}
$$

\end{itemize}
Moreover, the cells of $(\infty,2)$-categories and double $\infty$-categories have various coherent composition laws, and satisfy certain \textit{completeness} or \textit{univalence} conditions. We will recall the precise definitions of these structures in \ref{section:prelims}.

\subsection{The Gray tensor product for $(\infty,2)$-categories.}
\label{subsection:introduction on Gray}
The Gray tensor product is  a fundamental operation allowing to define, control, and study \textit{lax}
phenomena occurring naturally in the study of $(\infty,2)$-categories. Informally, given a concept in category theory (e.g.\ natural transformations or colimits), the lax
variant is obtained by replacing all commutative diagrams appearing in the definition of this concept by
diagrams that commute up to cells that are a priori \textit{not}  invertible. In this sense, the Gray tensor product may be viewed as
the lax variant of the cartesian product. It is usually denoted by the symbol $\otimes$.
For example, the cartesian product $[1] \times [1]$ corresponds to the free-living commutative square
\[\begin{tikzcd}
	00 & 10 \\
	01 & 11,
	\arrow[from=1-1, to=1-2]
	\arrow[from=1-1, to=2-1]
	\arrow[from=1-2, to=2-2]
	\arrow["\circlearrowleft"{description}, draw=none, from=2-1, to=1-2]
	\arrow[from=2-1, to=2-2]
\end{tikzcd}\]
while the Gray tensor product $[1]\otimes [1]$ is the free-living \textit{lax commutative square}
\[\begin{tikzcd}
	00 & 10 \\
	01 & 11.
	\arrow[from=1-1, to=1-2]
	\arrow[from=1-1, to=2-1]
	\arrow[from=1-2, to=2-2]
	\arrow[ Rightarrow, from=1-2, to=2-1]
	\arrow[from=2-1, to=2-2]
\end{tikzcd}\]

The construction and combinatorics of the Gray tensor product are non-trivial, and there are many ways to define this tensor product. We provide a non-exhaustive list of constructions:

\begin{enumerate}
    \item[$\otimes_V$] The Gray tensor product on the model of \textit{2-complicial sets} of $(\infty,2)$-categories defined by Verity in \cite{verity2008weak}.
    \item[$\otimes_{GR}$] The Gray tensor product on $(\infty,2)$-categories defined by Gaitsgory--Rozenblyum in \cite{GR}.
    \item[$\otimes_{GHL}$] The Gray tensor product on the model of \textit{scaled simplicial sets} of $(\infty,2)$-categories defined by Gagna--Harpaz--Lanari in \cite{gagna2021gray}.
    \item[$\otimes$] The Gray tensor product on the model of \textit{2-quasi-categories} for $(\infty,2)$-categories, that arise as certain set-valued presheaves on $\Theta_2$ (see \ref{def:glob sums}), defined by Maehara in \cite{maehara2021gray}. This will be the one used in this paper, but interpreted directly for presheaves on $\Theta_2$ valued in \textit{spaces}.
    \item[$\otimes_{DKM}$] The Gray tensor product on \textit{$2$-comical sets} defined by Doherty-- Kapulkin--Maehara in \cite{doherty2023equivalence}.
    \item[$\otimes_{CM}$] The Gray tensor product on $(\infty,2)$-categories defined by Campion--Maehara in \cite{campionmaehara}.
    \item[$\otimes_{C}$] The Gray tensor product on $(\infty,2)$-categories defined by Campion in \cite{campion2023gray}.
    \item[$\otimes_L$] The Gray tensor product on $(\infty,\omega)$-categories $\otimes_L^{\omega}$ defined by the first author in \cite{Effectivity} induces a Gray tensor product on $(\infty,2)$-categories after \textit{intelligent $2$-truncation} (see \cite[Definition 1.1.5]{Effectivity}).
\end{enumerate}

The multiplicity of these definitions arises both from the fact that there are many different models of $(\infty,2)$-categories, most of which admit a description of a Gray tensor product, and from the fact that within a chosen model, there are many a priori very different descriptions of this operation.
Fortunately, all these definitions are compared, and thus correspond to the same operation at the level of the $(\infty,1)$-category of $(\infty,2)$-categories:

\[\begin{tikzcd}
	{\otimes_{CM}} & {\otimes_C} & {\otimes_{DKM}} & {\otimes_{GR}} \\
	{\otimes} & {\otimes_L} & {\otimes_V} & {\otimes_{GHL}}
	\arrow["{(3)}"', equals, from=1-2, to=2-2]
	\arrow["{(7)}", equals, from=1-4, to=2-4]
	\arrow["{(2)}"', equals, from=2-1, to=2-2]
	\arrow["{(5)}"', equals, from=2-3, to=1-3]
	\arrow["{(4)}", equals, from=2-3, to=2-2]
	\arrow["{(6)}"', equals, from=2-3, to=2-4]
	\arrow["{(1)}"', equals, from=1-1, to=2-1]
\end{tikzcd}\]
\begin{multicols}{2}
    \begin{enumerate}
        \item \cite[Remark 3.4]{campionmaehara}
        \item \ref{prop:comparaison of gray tensor product}
        \item \cite[Remark 1.4.19]{Effectivity}
        \item \cite[Remark 1.4.19]{Effectivity} 
        \item \cite[Theorem 6.5]{campion2020comical}
        \item \cite[Corollary 2.11]{gagna2021gray}
        \item \cite[Theorem 6.26]{abellan2023comparing}
    \end{enumerate}
\end{multicols}

\noindent As we will show in \ref{prop:auto of gray}, the Gray tensor product admits no non-trivial automorphisms, and so all of the above identifications are necessarily unique.

In \cite{GR}, the following statements are conjectured about the Gray tensor product. Taking advantage of the various descriptions of this operation, these are all proved.
\begin{table}[H]
	\centering
	\renewcommand{\arraystretch}{1.2}
	\makebox[\textwidth]{%
	\begin{tabular}{| c | p{6cm} | p{5cm} |}
		\hline
		\textbf{Conjecture} & \textbf{Description} & \textbf{Status} \\
		\hline
		Proposition 10.3.2.9 & The Gray tensor product is associative. &  This was shown by Verity in \cite[Lemma 131]{verity2008weak}.  \\	\hline 
		Proposition 10.3.2.6 & The Gray tensor product commutes with colimits in both variables. & This was shown by Ozornova, Rovelli and Verity in \cite[Corollary 2.6]{ozornova2020gray}, building on the work of Verity \cite{verity2008weak}. \\
		\hline 
		Proposition 10.3.3.5 & The iterated Gray tensor product of simplices is a (strict) 2-category. &  This was shown by Maehara in \cite[Corollary 7.11]{maehara2021gray}.  \\
		\hline
	\end{tabular}}
	\caption{The Gaitsgory--Rozenblyum conjectures, part 1.}\label{table:gray conjectures}
\end{table}
\noindent It should be noted that the first two conjectures are proven for almost all of the different definitions of the Gray tensor product by the authors who introduced them. We have chosen to focus on Verity's Gray tensor product here because it is, to our knowledge, the first definition of a Gray tensor product in a homotopical setting.

\subsection{The squares functor.} The \textit{squares construction} was originally introduced by Ehresmann \cite{ehresmann} for strict double categories. Its $\infty$-categorical incarnation plays an important role in the $(\infty,2)$-categorical set-up of Gaitsgory--Rozenblyum, where it was defined in \cite[Subsection 10.4.1]{GR}. If $\C$ is an $(\infty,2)$-category, then the double $\infty$-category $\Sq(\C)$ of squares in $\C$ is loosely described as follows:
\begin{itemize}
    \item its objects are those of $\C$,
    \item its horizontal and vertical arrows are given by the arrows of $\C$,
    \item its 2-cells correspond to lax commutative squares in $\C$, so that there is a one-to-one correspondence as pictured below:
    \[ 
    \begin{tikzcd}
        a \arrow[r,"F"name=f]\arrow[d, "f"'] & b \arrow[d, "g"] \\
        c \arrow[r,"G"'name=t] & d
        \arrow[from=f,to=t, Rightarrow, shorten <= 6pt, shorten >= 6pt]
    \end{tikzcd}
    ~~ \text{in $\Sq(\C)$} \quad \Leftrightarrow  \quad \begin{tikzcd}
        a \arrow[r,"F"]\arrow[d, "f"'] & |[alias=f]|b \arrow[d, "g"] \\
        |[alias=t]|c \arrow[r,"G"'] & d
        \arrow[from=f,to=t, Rightarrow, shorten <= 6pt, shorten >= 6pt]
    \end{tikzcd}
    ~~ \text{in $\C$}.
    \]
\end{itemize}
This is functorial in $\C$, and we will provide a precise definition in \ref{section:cech and sq}. In fact, we will first introduce a {relative} version of the above construction that was considered by the first author in \cite{Effectivity}, and can be viewed as a 2-categorification of the \v{C}ech nerve. This is a slightly more general version than the relative squares construction $\Sq^\mathrm{Pair}$ that was earlier defined in \cite[Subsection 10.4.3]{GR} for pairs of $(\infty,2)$-categories.

In \cite{GR}, the following conjectures about the squares functor appear:
\begin{table}[H]
	\centering
	
	\renewcommand{\arraystretch}{1.2}
	\makebox[\textwidth]{%
	\begin{tabular}{| c | p{6cm} | p{5cm} |}
		\hline
		\textbf{Conjecture} & \textbf{Description} & \textbf{Status} \\
		\hline 
		Theorem 10.4.1.3 & The functor $\Sq$ is fully faithful. & This was shown by Abell\'an in \cite[Theorem 5]{abellan2023comparing}. \\
		\hline
		Theorem 10.4.3.5 & The functor $\Sq^\mathrm{Pair}$ is fully faithful. &  This was shown by the first author in \cite[Proposition 3.4.22]{Effectivity}.   \\
		\hline 
		Theorem 10.5.2.3 & The identification of the essential image of $\Sq$ and $\Sq^\mathrm{Pair}$. & This was shown by the first author in \cite[Proposition 3.4.23]{Effectivity}. \\
		\hline
		Theorem 10.4.6.3 & The \textit{cubes functor} that is closely related to $\Sq$, is fully faithful. & This was shown by the first author in \cite[Proposition 3.4.22]{Effectivity}.  \\
		\hline
	\end{tabular}}
	\caption{The Gaitsgory--Rozenblyum conjectures, part 2.}
\end{table}

\subsection{The universal property of squares} We will further complete the picture of the squares construction by demonstrating its universal property.
Note that one may view the squares construction as a non-trivial way to produce a double $\infty$-category out of an $(\infty,2)$-category $\C$. Besides, there are two trivial ways in which one may view $\C$ as a double $\infty$-category. Namely, one can consider its \textit{vertical inclusion}, denoted by $\C_v$, that is loosely described as follows:
\begin{itemize}
    \item its objects are those of $\C$,
    \item its vertical arrows are given by the arrows of $\C$,
    \item its horizontal arrows are identities,
    \item its $2$-cells correspond to those of $\C$, so that there is a one-to-one correspondence as pictured below: \[ 
    \begin{tikzcd}
        a \arrow[r, equal, ""name=f]\arrow[d, "f"'] & a \arrow[d, "g"] \\
        b \arrow[r,equal, ""'name=t] & b
        \arrow[from=f,to=t, Rightarrow, shorten <= 6pt, shorten >= 6pt]
    \end{tikzcd}
    ~~ \text{in $\C_v$} \quad \Leftrightarrow  \quad \begin{tikzcd}
	a & b
	\arrow[""{name=0, anchor=center, inner sep=0}, "g", bend left=40, from=1-1, to=1-2]
	\arrow[""{name=1, anchor=center, inner sep=0}, "f"', bend right=40, from=1-1, to=1-2]
	\arrow[Rightarrow, from=0, to=1, shorten <=3pt, shorten >=3pt]
\end{tikzcd}
    ~~ \text{in $\C$}.
    \]
\end{itemize}
Dually, the \textit{horizontal inclusion} of $\C$, denoted $\C_h$, is defined similarly by interchanging the horizontal and vertical arrows. We will discuss the details in \ref{subsection:dbl cats}. The ways in which we may view $\C$ as a double $\infty$-category are related by a span of functors
$$\C_v\to \Sq(\C)\leftarrow \C_h$$
between double $\infty$-categories; see \ref{section:cech and sq}.

The first contribution of this paper will be to demonstrate the universal property of this span, which was conjectured in \cite[Section 2.7]{JacoThesis} before. It states that the double $\infty$-category $\Sq(\C)$ is obtained by freely adding so-called \textit{companions} to $\C_v$. The notion of companions will be recalled in \ref{def:comp}. They were first introduced in the context of strict double categories by Grandis and Par\'e \cite{GrandisPare}, and later considered by Gaitsgory and Rozenblyum \cite{GR} for double $\infty$-categories to describe the essential image of $\Sq$.  Precisely, we will show the following in \ref{subsection:uni prop}:

\begin{thmintro}\label{thmintro:uni prop}
    Let $\C$ be an $(\infty,2)$-category, and $\Q$ be a double $\infty$-category. Then the canonical map $\C_v \to \Sq(\C)$ induces a monomorphism 
     $$
    \map_{\DbliCat}(\Sq(\C), \Q) \to \map_{\DbliCat}(\C_v, \Q)
    $$
    whose image is given by the functors $\C_v \to \Q$ that carry every arrow in $\C$ to a vertical arrow of $\Q$ that admits a companion horizontal arrow.
    
    Dually and under the assumption that $\Q$ is a locally complete (see \ref{subsection:dbl cats}), the canonical map $\C_h \to \Sq(\C)$ induces a monomorphism 
     $$
    \map_{\DbliCat}(\Sq(\C), \Q) \to \map_{\DbliCat}(\C_h, \Q)
    $$
    whose image is given by the functors $\C_h \to \Q$ that carry every arrow in $\C$ to a horizontal arrow of $\Q$ that is the companion of a vertical arrow.
\end{thmintro}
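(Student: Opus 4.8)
My plan rests on the fact, recalled in \ref{def:comp}, that companions are essentially unique: for a fixed vertical arrow $f$ of a double $\infty$-category $\Q$, the space of horizontal arrows equipped with the structure exhibiting them as a companion of $f$ is either empty or contractible. In particular, ``admitting a companion'' is a \emph{property} of a vertical arrow, and a functor of double $\infty$-categories automatically preserves companions. Granting this, I would split the vertical statement into two parts: that the restriction map is a monomorphism, and that its image is the indicated subspace. The inclusion of the image \emph{into} the functors satisfying the companion condition is then immediate, since the canonical map $\C_v \to \Sq(\C)$ sends every arrow of $\C$ to a vertical arrow of $\Sq(\C)$ that, by construction (see \ref{section:cech and sq}), admits a companion — namely its horizontal incarnation — and this property is preserved by postcomposition with any functor $\Sq(\C) \to \Q$.

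For the monomorphism, the plan is to show that $\C_v \to \Sq(\C)$ is an epimorphism in $\DbliCat$. The point is that $\Sq(\C)$ is generated from $\C_v$ by freely adjoining companions: every horizontal arrow of $\Sq(\C)$ is the companion of the vertical arrow attached to the same arrow of $\C$, and every square of $\Sq(\C)$ is obtained from a $2$-cell of $\C$ — that is, a square of $\C_v$ — by pasting with the defining companion cells. Since a functor out of $\Sq(\C)$ preserves companions and companions are unique, such a functor is determined by its restriction to $\C_v$. I would make this precise by exhibiting $\C_v \to \Sq(\C)$ as lying in the saturation of the single companion-adjunction map $[1]_v \to \Sq([1])$, which is an epimorphism by the direct analysis of the walking arrow carried out below; as epimorphisms in $\DbliCat$ are stable under pushout and under colimits in the arrow category, this yields the desired monomorphism on mapping spaces.

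The substance of the theorem is the surjectivity onto the companion-admitting functors, i.e.\ the construction of an extension. Here I would first treat the universal case $\C = [1]$: using the explicit, finite description of $\Sq([1])$ one checks directly that $\map_{\DbliCat}(\Sq([1]), \Q)$ is the space of vertical arrows of $\Q$ admitting a companion, which is the assertion for the walking arrow and also supplies the base epimorphism used above. For general $\C$, the strategy is to assemble these companions coherently over all of $\C$, using the Gray-tensor description $\Sq(\C)_{m,n} \simeq \map([m]\otimes[n], \C)$ from \ref{section:cech and sq} to organize the extension levelwise. I expect this coherent assembly to be the main obstacle: one must produce a genuine map of double $\infty$-categories, not merely an assignment on low-dimensional cells, compatible with horizontal and vertical composition and with all $2$-cells of $\C$ simultaneously. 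Concretely, I would exhibit $\C_v \to \Sq(\C)$ as the reflection of $\C_v$ into those double $\infty$-categories whose vertical arrows in the image admit companions — a ``free companion completion'' — so that the extension and its uniqueness, and hence also the monomorphism, become the defining universal property; the Gray-product combinatorics, together if needed with a reduction to the strict case of Grandis--Par\'e \cite{GrandisPare}, are what make this reflection identifiable with $\Sq$.

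Finally, the dual statement would follow by transposing double $\infty$-categories, which interchanges horizontal and vertical arrows and identifies the transpose of $\Sq(\C)$ with the squares of a suitable dual of $\C$. Under this transpose the condition ``the image vertical arrow admits a companion'' becomes ``the image horizontal arrow is the companion of a vertical arrow'', matching the asserted image. The one asymmetry is that companions carry a handedness, so that the transpose of $\Q$ is again a well-behaved double $\infty$-category — in particular one in which the completeness needed to detect companions holds — precisely when $\Q$ is locally complete in the sense of \ref{subsection:dbl cats}; this is the role of that hypothesis, and with it the horizontal case reduces formally to the vertical one.
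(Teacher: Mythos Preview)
Your outline correctly identifies the architecture --- monomorphism plus image characterization, with the dual handled by transposition --- but the decisive step, surjectivity onto the companion-admitting functors, is left as a gap you yourself flag as ``the main obstacle''. The suggestions you offer for closing it are not methods: ``coherently assembling'' the companions over all of $\C$ is a restatement of the problem, and exhibiting $\C_v \to \Sq(\C)$ as the free companion completion is circular, since that \emph{is} the theorem. The proposed saturation argument for the monomorphism has the same defect: you would need to write $\C_v \to \Sq(\C)$ as a transfinite composite of cobase changes of $[1]_v \to \Sq([1])$, and producing that presentation is again a coherence problem of the same flavor as the extension step. A reduction to Grandis--Par\'e would require a comparison between the strict and $\infty$-categorical squares constructions that is not available a priori.

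The paper takes a genuinely different route that avoids all of this combinatorics. The vertical statement is deduced from the more general \ref{thm:uni prop of cech} about directed \v{C}ech nerves, whose proof rests on the structural input \cite[Theorem 3.4.1]{Effectivity}: the sub double $\infty$-category $\tau_{\mathrm{vcomp}}\Q$ spanned by companion-admitting vertical arrows is always of the form $\Cn(f)$ for some eso filtration $f$, and $\Cn$ is fully faithful on those. Both the monomorphism and the extension then fall out of the adjunction $|{-}| \dashv \Cn$ in one line. For the horizontal statement your transpose idea is right in spirit but incomplete as stated: the transpose of $\Q$ need not be a double $\infty$-category even when $\Q$ is locally complete. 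The paper first passes to the subobject $\tau_{\mathrm{hcomp}}\Q \times_\Q \tau_{\mathrm{vcomp}}\Q$, proves in \ref{lem:comp core is complete} that this is a \emph{complete} double $\infty$-category (this is where local completeness of $\Q$ is actually used, via \cite[Theorem 4.13]{CompJaco}), and only then applies the transpose to reduce to the vertical case.
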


The above result generalizes \cite[Theorem B]{CompJaco}, where the above was shown to hold for $\C = [1]$. To prove \ref{thmintro:uni prop},  we show a slightly more general result for the {directed \v{C}ech nerve} that was introduced in \cite{Effectivity}; see \ref{thm:uni prop of cech}. In the context of strict double cateogries, \ref{thmintro:uni prop} was shown by Grandis--Par\'e \cite[Theorem 1.8]{GrandisPare}.

\subsection{The remaining Gaitsgory--Rozenblyum conjecture.} We may complete the tables of the Gaitsgory--Rozenblyum conjectures with the following:

\begin{table}[H]
	\centering
	\renewcommand{\arraystretch}{1.2}
	\makebox[\textwidth]{%
	\begin{tabular}{| c | p{6cm} | p{5cm} |}
		\hline
		\textbf{Conjecture} & \textbf{Description} & \textbf{Status} \\
		\hline 
		Proposition 10.4.5.4 & An equation relating the  Gray tensor product and the squares construction. & This will be shown in this paper, appearing as \ref{thmintro:comparison}.\\
		\hline
	\end{tabular}}
	\caption{The Gaitsgory--Rozenblyum conjectures, part 3.}\label{table:gray conjectures}
\end{table}

\noindent As of yet,  no proof of \cite[Proposition 10.4.5.4]{GR}  has appeared in the literature. Its demonstration will be our second and last contribution of this paper:

\begin{thmintro}
    \label{thmintro:comparison}
    Let $\C,\D$ and $\E$ be $(\infty,2)$-categories. There exists a natural equivalence 
    $$
    \map(\C_h\times \D_v,\Sq(\E))\simeq \map(\C\otimes \D,\E).
    $$
\end{thmintro}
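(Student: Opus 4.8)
The strategy is to interpret both sides of the desired equivalence as mapping spaces in the $(\infty,1)$-category of double $\infty$-categories, and to reduce the comparison to the universal property established in \ref{thmintro:uni prop}. The key observation is that the right-hand side, $\map(\C \otimes \D, \E)$, should be reformulated as a mapping space out of a double $\infty$-category via the squares functor. Indeed, since $\Sq$ is fully faithful (as recorded in the second conjecture table), one expects a natural equivalence
$$
\map(\C \otimes \D, \E) \simeq \map_{\DbliCat}\bigl(\Sq(\C \otimes \D), \Sq(\E)\bigr),
$$
so the whole statement becomes a comparison between two double $\infty$-categories receiving maps into $\Sq(\E)$: namely $\C_h \times \D_v$ and $\Sq(\C \otimes \D)$, or rather a suitable sub-double-category thereof cut out by companion conditions.

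First I would set up the adjunction or exponential structure relating the Gray tensor product to an internal hom, using the fact (from \ref{table:gray conjectures}) that $\otimes$ commutes with colimits in both variables, hence is part of a closed monoidal structure. This lets me rewrite $\map(\C \otimes \D, \E)$ as $\map(\C, \underline{\fun}(\D, \E))$ for the appropriate lax internal hom. Next I would analyze the left-hand side $\map(\C_h \times \D_v, \Sq(\E))$ by applying \ref{thmintro:uni prop}. The vertical inclusion $\D_v$ and horizontal inclusion $\C_h$ interact with the companion conditions in complementary ways: a functor $\C_h \times \D_v \to \Sq(\E)$ sends arrows of $\D$ to vertical arrows and arrows of $\C$ to horizontal arrows of $\Sq(\E)$, and by the universal property such data are governed precisely by whether these arrows admit companions. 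Since in $\Sq(\E)$ \emph{every} vertical arrow admits a companion and every horizontal arrow is a companion (this being the defining feature of the squares construction and the content of the essential-image description, Theorem 10.5.2.3), these conditions are automatically satisfied, and the universal property identifies the mapping space with the corresponding space of lax-square data in $\E$.

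The heart of the argument is then a direct combinatorial identification: lax commutative squares in $\E$ indexed by the product $\C_h \times \D_v$ are exactly maps $\C \otimes \D \to \E$. This is most transparent on generators: the lax square $[1] \otimes [1]$ is by definition the free lax commutative square, which is precisely the cell structure recorded by $\Sq$ of a square. I would make this precise by showing the functor $\C \mapsto \Sq(\C_h)$-type assignments are compatible with the Gray tensor product on simplices, invoking the conjecture that iterated Gray tensor products of simplices are strict 2-categories (Proposition 10.3.3.5 in \ref{table:gray conjectures}) to control the generating cells, and then extending to all $\C, \D$ by the fact that both sides preserve colimits in each variable. Naturality in all three variables follows by tracking the construction through the universal properties, which are themselves natural.

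The main obstacle I anticipate is the \emph{coherence} of the identification between lax-square data and Gray-tensor maps: matching the two sides on generating simplices is elementary, but promoting this to a natural equivalence of mapping \emph{spaces} — as opposed to a bijection on homotopy classes — requires that the universal property of \ref{thmintro:uni prop} and the fully-faithfulness of $\Sq$ be applied in a genuinely functorial, $\infty$-categorical manner, so that all the comparison equivalences assemble into a single natural transformation. The asymmetry between the horizontal and vertical directions (the dual part of \ref{thmintro:uni prop} requires local completeness of the target, whereas $\Sq(\E)$ is locally complete) must be handled with care to ensure the companion conditions line up correctly on both factors of the product. Verifying that the colimit-extension argument does not introduce spurious higher cells, and that it respects the monoidal and universal-property constraints simultaneously, is where the real work lies.
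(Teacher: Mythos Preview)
Your proposal has a genuine gap in the logical flow. You invoke \ref{thmintro:uni prop} to analyze $\map(\C_h \times \D_v, \Sq(\E))$, but that theorem controls mapping spaces out of $\Sq(X)$ in terms of maps out of $X_v$; it says nothing directly about a product $\C_h \times \D_v$. Observing that every vertical arrow of $\Sq(\E)$ admits a companion and every horizontal arrow is one tells you only that the image of the monomorphism in \ref{thmintro:uni prop} is the whole mapping space --- it does not by itself produce any comparison between $\C_h \times \D_v$ and $\C \otimes \D$. At best, an iterated application (together with unverified claims that internal homs such as $[\C_h,\Sq(\E)]$ inherit all companions and are locally complete) would let you replace $\C_h \times \D_v$ by $\Sq(\C) \times \Sq(\D)$, after which you still owe the identification $\Gr(\Sq(\C)\times\Sq(\D)) \simeq \C \otimes \D$, a statement of the same shape and difficulty as the original. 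What you call ``the heart of the argument'' --- the direct combinatorial identification --- is therefore not a mop-up step but the entire content, and ``check on simplices and extend by colimits'' hides all of the work: the extension step is exactly where one must control how $\Gr$, the horizontal and vertical inclusions, and the Gray product interact with the specific pushouts that present $[n;m]$ from $[n]\otimes[m]$.

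The paper's proof does not use \ref{thmintro:uni prop} at all. It works on the left-adjoint side, proving $\Gr(\C_h \times \D_v) \simeq \C \otimes \D$ by density reduction to $\C=[n;m]$, $\D=[k;l]$, followed by a chain of explicit pushout computations in $(\infty,2)\Cat$ and $\CDbliCat$ (\ref{lemma:globular sum as quotient of Gray}, \ref{lemma:funny equation}, \ref{lemma:crush product}, \ref{lemma:comparaions step 2}, \ref{lemma:comparaions step 3}). The key technical leverage comes from knowing that certain collapse maps --- such as $[n]\otimes[m]\to[n;m]$ and $a\otimes b\to a\times b$ for globular sums --- are epimorphisms, which lets one transport pushout squares along them via \ref{lemma:easy diagram chasing}. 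The argument is staged: first both variables are $\infty$-categories (\ref{lemma:comparaions step 0.5}), then one variable is general (\ref{lemma:comparaions step 1}), then both, with each stage expressing the relevant $\Gr$ as a pushout already computed at the previous stage. Uniqueness and naturality are handled separately by the rigidity result \ref{prop:auto of gray}.
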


The proof of this result will be the subject of \ref{section:comparison}.
As will be explained in \ref{section:cech and sq}, the functor $\Sq$ admits a left adjoint that we will denote by $\Gr$. By the Yoneda lemma, the previous theorem is then formally equivalent to asserting the existence of an equivalence
    $$
    \Gr(\C_h \times \D_v) \simeq \C \otimes \D
    $$
that is natural in $(\infty,2)$-categories $\C$ and $\D$.

It should be noted that the original conjecture is slightly different as it stipulates not only that there exists such an equivalence, but that it is realized through the comparison map defined by Gaitsgory and Rozenblyum in \cite[Subsection 10.4.5]{GR}. However, we will show in \ref{prop:auto of gray} that any natural transformation $(-)\otimes(-)\to (-)\otimes(-)$ must be the identity. Consequently, the original comparison map constructed by Gaitsgory and Rozenblyum, and the one considered here necessarily coincide.

\subsection*{Conventions} We will make use of the following notation and terminology throughout the article:
\begin{itemize}
    \item We will write $\infty\Cat$ and $\S$ for the $\infty$-categories of $\infty$-categories and spaces (or $\infty$-groupoids).
    \item The functor $\tau_{0}:\infty\Cat\to \S$ denotes the functor that carries an $\infty$-category to its underlying $\infty$-groupoid, i.e.\ the right adjoint to the inclusion $\S \subset \infty\Cat$. We will write $\tau_{0}^i:\infty\Cat\to \S$ for the left adjoint to this inclusion, called the \textit{groupoidification functor}.
    \item  If $\C$ is an $\infty$-category, we will write $\PSh(\C) := \fun(\C^\op, \S)$ for the $\infty$-category of presheaves on $\C$, and $\PSh_{\mathrm{Set}}(\C):= \fun(\C^\op, \mathrm{Set}) \subset \PSh(\C)$ for the full subcategory of set-valued presheaves on $\C$.
    \item Throughout the text, we will use the \textit{oplax} variant of the Gray tensor product for 2- and $(\infty,2)$-categories. We recall the definitions in \ref{section:prelims}.
    \item We will say that a a subcategory $\C$ of an $\infty$-category $\D$ is \textit{dense} when the induced nerve functor $\D\to \PSh(\C)$, obtained by restricting the Yoneda embedding on $\D$, is fully faithful. In all the situations that we will encounter, $\C$ is a small and $\D$ is cocomplete. In such case, this implies that $\D$ is a reflective subcategory of $\PSh(\C)$, and that any object $x\in\D$ can be canonically expressed as the colimit of the functor $\C_{/x}\to \C \to \D$.
\end{itemize}

\section{Two-dimensional $\infty$-categories}\label{section:prelims}

We will commence by briefly collecting the basic definitions and aspects of $(\infty,2)$-categories and double $\infty$-categories.

\subsection{Shapes for $(\infty,2)$-categories} We recall that, on account of \cite{JoyalTierney}, the full inclusion of the subcategory $\Delta \to \infty\Cat$ is dense. The essential image of the induced inclusion $\infty\Cat \to \PSh(\Delta)$ selects the \textit{Segal spaces} that are \textit{complete} in the sense of Rezk \cite{RezkSeg}. We collect these two notions here:

\begin{definition}\label{def:segal spaces}
    A presheaf $\C : \Delta^\op \to \S$ is called a Segal space if for every $n \geq 0$, the canonical map 
    $$
    \C([n]) \to \C([1]) \times_{\C([0]} \dotsb \times_{\C([0])} \C([1])
    $$
    is an equivalence. Let $J$ be the simplicial set defined by the pushout square 
    \[ \begin{tikzcd}
        {[1] \sqcup [1]} \arrow[rr, "{(\{0 \leq 2\}, (\{1\leq 3\})}"]\arrow[d] && {[3]} \arrow[d] \\
        {[0] \sqcup [0]} \arrow[rr] && J
    \end{tikzcd}
    \] 
    in $\PSh(\Delta)$.
    The Segal space $\C$ is said to be \textit{complete} if it is local with respect to the map $J \to [0]$, i.e. $\C([0]) = \map_{\PSh(\Delta)}([0], \C) \to \map_{\PSh(\Delta)}(J, \C)$ is an equivalence.
\end{definition} 

\begin{remark}
    Let $\C$ be a Segal space.
    The inclusion $[1] \xrightarrow{\{0\leq 1\}} [3] \to J$ induces a monomorphism $\map_{\PSh(\Delta)}(J,\C) \to \map_{\PSh(\Delta)}([1], \C) = \C([1])$ that selects those arrows that are equivalences. This is the content of \cite[Theorem 6.2]{RezkSeg}.
\end{remark}

We may similarly define $(\infty,2)$-categories as presheaves on a suitable collection of 2-categorical shapes. 

\begin{definition}
A (strict) 2-category is called \textit{gaunt} if the only invertible 1- and 2-cells are identities. We will write $2\Gaunt$ for the full subcategory of the category of strict 2-categories spanned by the gaunt 2-categories.
\end{definition}

Important examples of gaunt 2-categories are Joyal's \textit{globular sums}:

\begin{definition}\label{def:glob sums}
Suppose that $n$ is a non-negative integer. Consider a tuple $\overline{m} = (m_0,...,m_{n-1})$ of non-negative integers. Then we will write
$[n;\overline{m}]$ for the gaunt $2$-category whose objects are given by $0, 1, ..., n$ and such that for any $0\leq i,j \leq n$, 
$$[n;\overline{m}](i,j):=\begin{cases}
[m_i]\times \dotsb \times [m_{j-1}] & \text{if $i \leq j$,} \\
\emptyset & \text{otherwise.}
\end{cases}$$
We will write $\Theta_2$ for the full subcategory of $2\Gaunt$ whose objects are of the shape $[n;\overline{m}]$ for some $n$ and tuple $m$. 
The objects in $\Theta_2$ are referred to as \textit{globular sums}. Note that we may view $\Delta$ as a full subcategory of $\Theta_2$ via the functor
$\Delta\to \Theta_2$ that carries $[n]$ to the globular sum $[n;(0,...,0)]$.
\end{definition}

\begin{definition}
A presheaf $\C :\Theta_2^\op \to \S$ is called an \textit{$(\infty,2)$-category} if:
\begin{itemize}
    \item for any tuple $\overline{m}=(m_0,...,m_{n-1})$, the canonical map 
    $$\C([n;\overline{m}])\to \C([1;m_0])\times_{\C([0])}...\times_{\C([0])}\C([1;m_{n-1}])$$ is an equivalence,
    \item for any $n \geq 0$, the canonical map 
    $$\C([1;n])\to \C([1;1])\times_{\C([1])}...\times_{\C([1])}\C([1;1])$$
    is an equivalence,
    \item $\C$ is local with respect to the maps $J \to [0]$ and $[1;J] \to [1]$, where $[1;J]$ is defined by the pushout square
    \[ \begin{tikzcd}
        {[1;1] \sqcup [1;1]} \arrow[rr, "{[1;(\{0 \leq 2\}, (\{1\leq 3\})]}"]\arrow[d] && {[1;3]} \arrow[d] \\
        {[1] \sqcup [1]} \arrow[rr] && {[1;J]}
    \end{tikzcd}
    \] 
    in $\PSh(\Theta_2)$.
\end{itemize}
We will write $(\infty,2)\Cat \subset \PSh(\Theta_2)$ for the full subcategory spanned by the $(\infty,2)$-categories. 
\end{definition}

\begin{construction}
The involutions $\op : 2\Gaunt \to 2\Gaunt$ and $\co : 2\Gaunt \to 2\Gaunt$ given by reversing the directions of 1- and 2-cells respectively, both restrict to involutions $\op, \co : \Theta_2 \to \Theta_2$ of $\Theta_2$. In turn, these induce functors $\op^*, \co^* : \PSh(\Theta_2) \to \PSh(\Theta_2)$ that restrict to involutions $$(-)^\op, (-)^\co : (\infty,2)\Cat \to (\infty,2)\Cat,$$
respectively.
\end{construction}

\begin{notation}
We follow the notation in \cite[Definition 1.1.5]{Effectivity}. There is a canonical inclusion 
$\iota : \infty\Cat \rightarrow (\infty,2)\Cat$ that is obtained by left Kan extending the functor $\Delta \rightarrow (\infty,2)\Cat$ along the inclusion $\Delta \to \infty\Cat$. We will leave the notation of $\iota$ implicit throughout this paper. The functor $\iota$ admits a right adjoint $$\tau_{1}:(\infty,2)\Cat\to \infty\Cat,$$ called the \textit{$1$-truncation} or \textit{$1$-core} that forgets the $2$-cells. Moreover, $\iota$ admits a left adjoint $$\tau^i_{ 1}:(\infty,2)\Cat\to \infty\Cat$$ that is called the \textit{intelligent $1$-truncation}, and inverts the $2$-cells.
We will simply write $\tau_{0}$ and $\tau^i_{0}$ for the functors $(\infty,2)\Cat\to \S$ given by the composites $\tau_{0}\tau_{1}$ and $\tau^i_{0}\tau^i_{ 1}$ respectively.
\end{notation}

\begin{construction}\label{def:discrete}
The canonical inclusion 
$\Theta_2\to 2\Gaunt$ gives rise to an adjunction
\[
	{\Str:(\infty,2)\Cat} \rightleftarrows {2\Gaunt:N}.
\]
The resulting functor $N$ is fully faithful, and its essential image is spanned by the \textit{discrete} $(\infty,2)$-categories, i.e.\ those $(\infty,2)$-categories $\C$ for which $\C([n;\overline{m}])$ is a set for all $[n;\overline{m}] \in \Theta_2$. We will leave the notation of $N$ implicit in what follows.
\end{construction}

\begin{construction}
For the sake of simplicity, we will write $[n;m]$ for the globular sum $[n;(m,...,m)]$. We will write $[\Delta;\Delta]$ for the 1-category that fits in the pushout square
\[\begin{tikzcd}
	{\{[0]\}\times \Delta} & {\Delta\times \Delta} \\
	{\{[0]\}} & {[\Delta;\Delta]}
	\arrow[from=1-1, to=1-2]
	\arrow[from=1-1, to=2-1]
	\arrow[from=1-2, to=2-2]
	\arrow[from=2-1, to=2-2]
\end{tikzcd}\]
of strict categories, which is in fact also a pushout square of $\infty$-categories.
One readily verifies that the bicosimplicial object $\Delta^{\times 2} \rightarrow 2\Gaunt : ([n],[m]) \mapsto [n;m]$ factors to a functor $[\Delta;\Delta]\to 2\Gaunt$.
By \cite[Remark 1.1.10]{Effectivity} or \cite[Remark 3.10]{CompJaco}, the (non-full) subcategories
$$[\Delta;\Delta] \to 2\Gaunt \quad \mbox{and} \quad [\Delta;\Delta]\to 2\Gaunt \to  (\infty,2)\Cat$$
are dense.
\end{construction}

\subsection{The Gray tensor product for gaunt 2-categories} In this subsection, we gather the basics of the oplax Gray tensor product for gaunt $2$-categories.

\begin{notation}
We will write $$(-) \otimes_G (-):2\Gaunt\times 2\Gaunt \to 2\Gaunt$$ for the oplax Gray tensor product of gaunt 2-categories. This operation was originally defined by Gray in \cite{gray2006formal}.  
\end{notation}

\begin{remark}
Given two gaunt 2-categories $A$ and $B$, $A \otimes_G B$ is generated by $k$-cells of shape $a\otimes_G b$ where $a$ is a $(k-i)$-cell of $A$ and $b$ is an $i$-cell of $B$. The composition of these cells is subject to several coherence equations, as described, for example, in \cite[Section 3.1]{maehara2021gray}.
\end{remark}

\begin{remark}
\label{rem:when factor throug tau1}
A functor $\phi:A\otimes_GB\to E$ between gaunt 2-categories factors through $A \otimes_G\tau^i_1B$ if and only if for any $0$-cell $a$ in $A$ and any $2$-cell $b$ in $B$, $\phi(a\otimes_G b)$ is an identity 2-cell.
\end{remark}

\begin{proposition}
\label{prop:auto of gray gaunt}
The functor $\otimes_G$, viewed as an object of  $\fun(2\Gaunt\times 2\Gaunt,2\Gaunt)$, admits no non-trivial endomorphism.
\end{proposition}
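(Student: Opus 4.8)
The plan is to use the generation property of the Gray tensor product recalled above: every cell of $A \otimes_G B$ is a composite of generating cells of the form $a \otimes_G b$, where $a$ is a cell of $A$ and $b$ is a cell of $B$ (see \cite[Section 3.1]{maehara2021gray}). An endomorphism $\eta$ of $\otimes_G$ in $\fun(2\Gaunt \times 2\Gaunt, 2\Gaunt)$ has components $\eta_{A,B} : A \otimes_G B \to A \otimes_G B$ that are $2$-functors, and a $2$-functor which is the identity on a generating family of cells is the identity. Since $2\Gaunt$ consists of (strict) $2$-categories, the only generating cells are the $a \otimes_G b$ with $\dim a + \dim b \le 2$. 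Hence it will suffice to show that each $\eta_{A,B}$ fixes all such generating cells.

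Next I would reduce this to a finite check on globes. Write $G_0 = [0]$, $G_1 = [1]$ and $G_2 = [1;1]$ for the $0$-, $1$- and $2$-globes. An $i$-cell $a$ of $A$ is classified by a functor $a : G_i \to A$, and likewise a $j$-cell $b$ of $B$ by $b : G_j \to B$; the generating cell $a \otimes_G b$ is then the image of the top cell $\iota_i \otimes_G \iota_j$ of $G_i \otimes_G G_j$ under the functor $a \otimes_G b$. Naturality of $\eta$ along the morphism $(a,b) : (G_i, G_j) \to (A,B)$ gives $\eta_{A,B} \circ (a \otimes_G b) = (a \otimes_G b) \circ \eta_{G_i, G_j}$, so it is enough to prove that $\eta_{G_i, G_j}$ fixes $\iota_i \otimes_G \iota_j$ for each of the six pairs $(i,j)$ with $i + j \le 2$.

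I would settle these by induction on $i + j$. The case $(0,0)$ is immediate, as $[0]$ has only the identity endofunctor. For $(1,0)$ and $(0,1)$, naturality along the two coface maps $[0] \to [1]$ forces $\eta_{[1],[0]}$ and $\eta_{[0],[1]}$ to fix both objects of $[1]$, hence to be the identity. For $(2,0)$ and $(0,2)$, naturality along the object- and $1$-cell-inclusions $[1] \to [1;1]$ (using the settled case $(1,0)$) shows that $\eta_{[1;1],[0]}$ fixes both objects and both parallel $1$-cells of $[1;1]$, and since there is a unique $2$-cell with that boundary its action on the $2$-cell is forced. The decisive case is $(1,1)$: here $[1] \otimes_G [1]$ is the free oplax square, and naturality along the maps $[1] \otimes_G d^k$ and $d^k \otimes_G [1]$ (together with the cases $(1,0)$, $(0,1)$, $(0,0)$) pins down $\eta_{[1],[1]}$ on all four objects and all four generating $1$-cells, leaving only the central $2$-cell $\iota_1 \otimes_G \iota_1$ to determine.

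The main obstacle is precisely this last step: showing that once its boundary is fixed, the $2$-cell $\iota_1 \otimes_G \iota_1$ must be preserved. This reduces to the assertion that in $[1] \otimes_G [1]$ the hom-category from $00$ to $11$ contains a unique non-identity $2$-cell between the two composite $1$-cells, which I would read off from the explicit combinatorics of the Gray tensor product in \cite[Section 3.1]{maehara2021gray}. Granting this, $\eta_{[1],[1]}$ fixes the top cell, all six globe cases hold, and the reduction above yields $\eta_{A,B} = \id$ for every pair $(A,B)$, so $\eta$ is the identity endomorphism.
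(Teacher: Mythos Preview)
Your proposal is correct and follows essentially the same approach as the paper's proof: both reduce via naturality to the cases where $A$ and $B$ are globes, and both rely on the key observation that in $[1]\otimes_G[1]$, $[1;1]\otimes_G[0]$, and $[0]\otimes_G[1;1]$ there is at most one $2$-cell between any pair of parallel $1$-cells. The only difference is organizational: the paper proceeds by dimension (objects, then $1$-cells, then $2$-cells, handling the globe cases within each step), whereas you first reduce everything to the six globe pairs and then check them; the content is identical.
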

\begin{proof}
Let $\phi:\otimes_G\to \otimes_G$ be an endomorphism. We will show that for any pair of gaunt $2$-categories $A$ and $B$, the component $\phi_{A,B}:A\otimes_G B\to A \otimes_G B$ is given by the identity.

\textit{The functor $\phi_{A,B}$ is the identity on objects.} Note that $[0] \otimes_G [0] = [0]$, so that $\phi_{[0],[0]}= \id_{[0]}$. Suppose that we have objects $a\in A$, $b\in B$, then commutativity of the naturality square
\[\begin{tikzcd}
	{[0]\otimes_G[0]} & {A\otimes_G B} \\
	{[0]\otimes_G[0]} & {A\otimes_G B}
	\arrow["{a\otimes_G b}", from=1-1, to=1-2]
	\arrow[equals, from=1-1, to=2-1]
	\arrow["{\phi_{A,B}}", from=1-2, to=2-2]
	\arrow["{a\otimes_G b}"', from=2-1, to=2-2]
\end{tikzcd}\]
implies that $\phi(a\otimes_G b)=a\otimes_G b$.

\textit{The functor $\phi_{A,B}$ is the identity on $1$-cells.} Note that $[1] \otimes_G [0] = [1]$. Hence $\phi_{[1], [0]}$ is an endofunctor of $[1]$ that acts as the identity on objects on account of the previous step. Since $[1]$ is a poset, this implies that $\phi_{[1],[0]}$ is the identity. Similarly, one deduces that $\phi_{[0],[1]}$ is the identity.
Suppose now that we are given a $1$-cell $f : a \to a'\in A$, and an object $b\in B$.
The commutativity of the naturality square
\[\begin{tikzcd}
	{[1]\otimes_G[0]} & {A\otimes_G B} \\
	{[1]\otimes_G[0]} & {A\otimes_G B}
	\arrow["{f\otimes_G b}", from=1-1, to=1-2]
	\arrow[equals, from=1-1, to=2-1]
	\arrow["{\phi_{A,B}}", from=1-2, to=2-2]
	\arrow["{f\otimes_G b}"', from=2-1, to=2-2]
\end{tikzcd}\]
implies that $\phi(f\otimes_G b)=f\otimes_G b$. We can show similarly that $\phi(a\otimes_G g)=a\otimes_G g$ for any object $a \in A$ and any $1$-cell $g: b\to b' \in B$. 
As every $1$-cell of $A\otimes_G B$ can be expressed as a composite of $1$-cells of the shape $f\otimes_G b$ or $a\otimes_G g$, this implies that $\tau_1\phi_{A,B}:\tau_1(A\otimes_G B)\to \tau_1(A\otimes_G B)$ is the identity.

\textit{The functor $\phi_{A,B}$ is the identity on $2$-cells.} We now remark that the gaunt 2-categories $[1]\otimes_G [1], [1;1] \otimes_G [0]$ and $[0]\otimes_G [1;1]$ share the following property: for any pair of parallel $1$-cells, there exists at most one $2$-cell between them. 
As we already established that the components of $\phi$ are the identity on $1$-cells, this implies that $\phi_{[1],[1]}$, $\phi_{[1;1],[0]}$
and $\phi_{[0],[1;1]}$ are identities. Using the same naturality argument as before, this implies that $\phi_{A,B}(f\otimes_G g)=f\otimes_G g$ where $f$ is a $k$-cell of $A$ and $g$ a $(2-k)$-cell of $B$. The natural transformation $\phi_{A,B}$ then acts as the identity on the generating $2$-cells of $A\otimes_G B$, and therefore must be the identity on $2$-cells as well.
\end{proof}

\subsection{The Gray tensor product for $(\infty,2)$-categories} 
We will follow here the approach of Maehara \cite{maehara2021gray}, but adapted to space-valued presheaves instead of set-valued presheaves on $\Theta_2$. We refer to  \ref{subsection:introduction on Gray} of the introduction for an overview of the different definitions of this operations, and the links they maintain.

\begin{construction}[Maehara]The functor
$$
\Theta_2 \times \Theta_2 \to (\infty,2)\Cat : (a,b) \mapsto a \otimes_G b
$$
extends to a unique functor $\PSh(\Theta_2) \times \PSh(\Theta_2) \rightarrow (\infty,2)\Cat$ that preserves colimits in both variables. This then restricts to a functor
$$(-)\otimes(-):(\infty,2)\Cat\times (\infty,2)\Cat \to (\infty,2)\Cat$$
that will be called the \textit{(oplax) Gray tensor product}.
\end{construction}

\begin{theorem}[Maehara]
\label{theo:Gray tensor product preserves colimits}
The Gray tensor product preserves colimits in both variables.
\end{theorem}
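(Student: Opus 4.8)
The plan is to leverage the fact that the bi-cocontinuous extension to presheaves already preserves colimits, and to show that the Gray tensor product descends compatibly to the reflective localization $(\infty,2)\Cat \subset \PSh(\Theta_2)$. Write $L : \PSh(\Theta_2) \to (\infty,2)\Cat$ for the reflector (the left adjoint to the inclusion), and let $\bar S$ denote the class of \emph{local equivalences}, i.e.\ the maps inverted by $L$. By construction, $(-) \otimes (-)$ is the restriction to $(\infty,2)\Cat \times (\infty,2)\Cat$ of a functor $\PSh(\Theta_2) \times \PSh(\Theta_2) \to (\infty,2)\Cat$ preserving colimits in each variable, which I will abusively also denote $\otimes$. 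Since colimits in $(\infty,2)\Cat$ are computed by applying $L$ to the colimit formed in $\PSh(\Theta_2)$, the theorem follows once I show that for every $\C \in (\infty,2)\Cat$ the functors $\C \otimes (-)$ and $(-) \otimes \C$ carry local equivalences to equivalences in $(\infty,2)\Cat$. Indeed, given a diagram $\{X_j\}$ in $(\infty,2)\Cat$ with colimit $\C' = L(\colim_j X_j)$, the unit $\colim_j X_j \to \C'$ is a local equivalence, so $(\colim_j X_j) \otimes \C \to \C' \otimes \C$ is an equivalence, while the left-hand side is $\colim_j(X_j \otimes \C)$ by cocontinuity of the extension.

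It remains to prove that $(-) \otimes \C$ inverts local equivalences. The class $\bar S$ is the strongly saturated class generated by the set $S$ of maps appearing in the definition of $(\infty,2)$-categories: the Segal spine inclusions $[1;m_0] \sqcup_{[0]} \dotsb \sqcup_{[0]} [1;m_{n-1}] \to [n;\overline m]$, the inner maps $[1;1] \sqcup_{[1]} \dotsb \sqcup_{[1]} [1;1] \to [1;n]$, and the completeness maps $J \to [0]$ and $[1;J] \to [1]$. Because $(-) \otimes \C$ preserves colimits, it inverts every element of $\bar S$ as soon as it inverts the generating set $S$. Moreover, using cocontinuity in the second variable and writing $\C = \colim_k c_k$ as a colimit of representables $c_k \in \Theta_2$, it suffices to show that for each generator $f : A \to B$ in $S$ and each globular sum $c$, the map $f \otimes c$ is an equivalence in $(\infty,2)\Cat$; the statement for $(-) \otimes \C$ then follows by taking the colimit over $k$, and the statement for $\C \otimes (-)$ is entirely symmetric.

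This reduces the theorem to a combinatorial analysis of the Gray tensor product of globular sums, which I would carry out using the cellular presentation of $\otimes_G$. For the Segal generators, I would use that in strict $2$-categories the globular sum $[n;\overline m]$ is the genuine pushout of its spine (and likewise $[1;n]$ is glued from copies of $[1;1]$ along $[1]$), together with the biclosedness of $\otimes_G$, which forces it to preserve these pushouts; after applying the nerve $N$, the comparison map $f \otimes c$ is then identified with an iterated spine inclusion into the globular sum $[n;\overline m] \otimes_G c$, and hence is a local equivalence. For the completeness generators, I would first resolve $J$ through the representables of its defining pushout, compute $J \otimes_G c$ explicitly, and verify that the map $J \otimes c \to c$ collapses precisely the cells that completion inverts.

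The main obstacle is the treatment of the completeness maps. Here $J$ is not discrete, so the conclusion cannot be read off directly from the combinatorics of gaunt $2$-categories; the difficulty is that $\otimes_G$ is defined on gaunt $2$-categories and does not respect the univalence localization a priori, so one must control exactly which cells of $J \otimes_G c$ become invertible and check that $J \otimes c \to c$ is inverted by completion. A secondary technical point, already present for the Segal generators, is that $N$ does not preserve colimits, so transferring the pushout decomposition of $[n;\overline m] \otimes_G c$ from strict $2$-categories to the presheaf level requires recognizing the comparison as a composite of spine inclusions rather than as an isomorphism.
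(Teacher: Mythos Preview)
Your approach is correct and matches the paper's: both reduce to showing that the cocontinuous extension $\PSh(\Theta_2)^{\times 2} \to (\infty,2)\Cat$ inverts the generating Segal and completeness maps in each variable, so that it descends along the reflective localization. The paper simply cites Maehara's \cite[Subsection 6]{maehara2021gray} for this verification, which is precisely the combinatorial work you outline (and whose hard case, the completeness extensions, you rightly flag as the main obstacle).
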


\begin{proof}
The extension $\PSh(\Theta_2) \times \PSh(\Theta_2) \rightarrow (\infty,2)\Cat$ preserves colimits in both variables and sends each spine inclusion and completeness extension to an equivalence on account of \cite[Subsection 6]{maehara2021gray}.
\end{proof}

\begin{theorem}[Maehara]
\label{theo:Gray tensor of globular sums}
The canonical map
$$a_0\otimes a_1\otimes...\otimes a_n\to a_0\otimes_Ga_1\otimes_G ...\otimes_G a_n$$
is an equivalence for every sequence $a_0, a_1,...,a_n$ of globular sums.
\end{theorem}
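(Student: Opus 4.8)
The plan is to argue by induction on $n$, using the associativity of both Gray tensor products to strip off one factor at a time. The cases $n=0$ and $n=1$ are immediate from the construction of $\otimes$, since by definition the colimit-preserving extension restricts on a pair of globular sums $(a,b)$ to the strict product $a \otimes_G b$, viewed as a discrete $(\infty,2)$-category through $N$. For the inductive step, write $A := a_0 \otimes_G \cdots \otimes_G a_{n-1}$, which is a gaunt $2$-category, and suppose we already know that $a_0 \otimes \cdots \otimes a_{n-1} \simeq A$. Since $a_0\otimes\cdots\otimes a_n \simeq A \otimes a_n$ by associativity of $\otimes$, while $a_0\otimes_G\cdots\otimes_G a_n \simeq A \otimes_G a_n$ by associativity of $\otimes_G$, the theorem for $n$ reduces to the following claim: for every gaunt $2$-category $A$ and every globular sum $b$, the canonical comparison $A \otimes b \to A \otimes_G b$ is an equivalence.

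To prove this claim, I would exploit the density of $\Theta_2$ together with the fact that $\otimes$ preserves colimits. Writing $A$ as the canonical colimit $A \simeq \colim_{(c \to A)} c$ of globular sums indexed by the slice $(\Theta_2)_{/A}$, \ref{theo:Gray tensor product preserves colimits} gives
\[
A \otimes b \;\simeq\; \colim_{(c\to A)} (c \otimes b) \;\simeq\; \colim_{(c\to A)} (c \otimes_G b),
\]
where the second equivalence is the already-established case of a single pair of globular sums. On the strict side, the oplax Gray tensor product is part of a biclosed monoidal structure on $2$-categories, so $(-) \otimes_G b$ is a left adjoint and preserves colimits; hence the same diagram $c \mapsto c \otimes_G b$ has colimit $A \otimes_G b$ when computed among gaunt $2$-categories.

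The crux of the argument — and what I expect to be the main obstacle — is to reconcile these two colimits, i.e.\ to show that the fully faithful inclusion $N : 2\Gaunt \to (\infty,2)\Cat$ preserves the particular colimit $\colim_{(c\to A)}(c \otimes_G b)$. Since $N$ is merely a right adjoint, this is not formal; it amounts to checking that the gaunt $2$-category $A \otimes_G b$, regarded as a presheaf on $\Theta_2$ through $N$, already satisfies the Segal and completeness conditions and is exhibited as the reflection of the presheaf-level colimit of the $c \otimes_G b$. The completeness conditions are automatic because $A \otimes_G b$ is gaunt, so the content lies entirely in the Segal-type gluing: one must match the cells of $A \otimes_G b$, described explicitly by Gray's formula as products of cells of $A$ and of $b$, against the cells contributed by the globular sums $c$ lying over $A$, and verify that their composition is computed correctly. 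This is precisely the combinatorial heart of Maehara's analysis of the Gray tensor product on $\Theta_2$. Concretely, I would reduce $A$ to its spine/cellular decomposition into copies of $[0]$, $[1]$ and $[1;1]$, glued along the two families of Segal pushouts — which $N$ does preserve, because $(\infty,2)$-categories are by definition the local objects for them — so that the verification is localized to the Gray products of these basic cells with $b$, where it can be carried out by hand.
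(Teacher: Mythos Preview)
The paper does not prove this statement at all; it simply cites Maehara's original paper, where the combinatorial work is carried out. So there is nothing to compare against on the paper's side. Your proposal, by contrast, tries to give an actual argument, and the reduction you set up is sensible: fix a parenthesization, induct on $n$, and reduce to showing that for a gaunt $2$-category $A$ (namely an iterated strict Gray product of globular sums) and a globular sum $b$, the canonical map $A\otimes b \to A\otimes_G b$ is an equivalence. You are also right that the crux is whether the colimit $\colim_{(c\to A)}(c\otimes_G b)$, computed in gaunt $2$-categories, is preserved by the inclusion $N$ into $(\infty,2)\Cat$.

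The gap is precisely in your last paragraph, where you propose to resolve this by a ``spine/cellular decomposition'' of $A$ into copies of $[0]$, $[1]$, $[1;1]$ glued along Segal pushouts. This does not work: the $A$ you care about is an iterated $\otimes_G$ of globular sums, which is \emph{not} itself a globular sum and has no such simple Segal decomposition. Even if one could write $A$ as an iterated pushout along spine-type maps, applying $(-)\otimes_G b$ to such a pushout does not produce another spine-type pushout: the pieces $[1]\otimes_G b$ and $[1;1]\otimes_G b$ are themselves non-globular gaunt $2$-categories, and knowing that $N$ preserves spine pushouts tells you nothing about whether it preserves \emph{these} pushouts. So you have only relocated the difficulty, not removed it. This is exactly the substantive combinatorial input that Maehara supplies, via an explicit cellular analysis of iterated Gray products of globular sums. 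In short, your outline correctly isolates the hard step but does not discharge it; the paper is right to defer to \cite{maehara2021gray} at this point. A minor side remark: be careful invoking ``associativity of $\otimes$'' in the inductive step, since in this paper the monoidal structure on $(\infty,2)\Cat$ is established as a \emph{corollary} of the present theorem; you should instead simply fix a parenthesization throughout.
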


\begin{proof}
This is the content of  \cite[Corollary 7.11]{maehara2021gray}.
\end{proof}

\begin{corollary} 
The Gray tensor product extends to a monoidal structure on $(\infty,2)\Cat$ such that $\Str:(\infty,2)\Cat\to 2\Gaunt$ is a (strong) monoidal functor.
\end{corollary}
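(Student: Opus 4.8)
The plan is to exhibit the Gray tensor product as a double monoidal localization of a Day convolution structure on $\PSh(\Theta_2)$: first to descend to $(\infty,2)\Cat$, producing the monoidal structure itself, and then to descend once more to $2\Gaunt$, producing the monoidality of $\Str$. The point that requires care is \emph{coherence}: Theorems \ref{theo:Gray tensor product preserves colimits} and \ref{theo:Gray tensor of globular sums} only directly yield that $\otimes$ is bi-cocontinuous and associative up to equivalence on globular sums, whereas a monoidal structure demands a coherent system of associators and unitors. The Day convolution machinery is precisely what organizes these coherences for free, which is why I route the argument through it; constructing the associators by hand is the genuine difficulty that I deliberately offload.

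First I would recall that $(2\Gaunt, \otimes_G, [0])$ is a monoidal $1$-category, its associativity and unitality going back to Gray (associativity being the classical result also recovered by Verity). As the inclusion $\Theta_2 \hookrightarrow 2\Gaunt$ is dense, it equips $\Theta_2$ with a promonoidal structure whose associated promonoidal Day convolution endows $\PSh(\Theta_2)$ with a monoidal structure $\widetilde\otimes$, cocontinuous in each variable, determined on representables by $y(a)\,\widetilde\otimes\, y(b) \simeq N(a\otimes_G b)$ and with unit $y([0]) = [0]$. By uniqueness of bi-cocontinuous extensions, $\widetilde\otimes$ agrees with the composite of the bifunctor from the construction of the Gray tensor product with the inclusion $(\infty,2)\Cat \hookrightarrow \PSh(\Theta_2)$; Theorem \ref{theo:Gray tensor of globular sums} moreover identifies its iterated values on globular sums with the iterated $\otimes_G$, confirming that the associativity data of $\widetilde\otimes$ is the expected one.

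Next I would descend $\widetilde\otimes$ along the reflective localization $L\colon \PSh(\Theta_2)\to (\infty,2)\Cat$. By a standard monoidal localization criterion (Lurie, \emph{Higher Algebra}, 2.2.1.9), it suffices to show that the $L$-local equivalences are stable under tensoring with an arbitrary object on either side. This class is strongly saturated and generated by the spine inclusions together with the completeness maps $J\to[0]$ and $[1;J]\to[1]$; since each functor $a\,\widetilde\otimes\,(-)$ and $(-)\,\widetilde\otimes\,a$ is cocontinuous, it is enough to treat the case where the fixed factor $a$ is representable, and this is exactly what the proof of Theorem \ref{theo:Gray tensor product preserves colimits} provides. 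Bi-cocontinuity then extends the conclusion to all $a$. Consequently $(\infty,2)\Cat$ acquires a monoidal structure for which $L$ is monoidal, and whose underlying bifunctor is the Gray tensor product.

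Finally, for the monoidality of $\Str$ I would apply the same criterion to the composite reflective localization $\Str\circ L\colon \PSh(\Theta_2)\to 2\Gaunt$ onto the discrete $(\infty,2)$-categories. The class it inverts is generated by the $L$-local equivalences together with the maps witnessing discreteness, and the same reduction to representable fixed factors — again invoking Theorem \ref{theo:Gray tensor of globular sums} to compute tensors of globular sums — shows this class is $\widetilde\otimes$-stable. Thus $2\Gaunt$ inherits a monoidal structure making $\Str\circ L$ monoidal; by Theorem \ref{theo:Gray tensor of globular sums} and density this structure restricts to $\otimes_G$ on globular sums and hence coincides with $\otimes_G$ everywhere. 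Since $L$ is already monoidal, it follows that $\Str$ is strong monoidal. Here the rigidity of $\otimes_G$ recorded in \ref{prop:auto of gray gaunt} is not logically needed, but it guarantees that the resulting identifications are canonical. The hardest part remains the coherence bookkeeping, which the Day convolution formalism absorbs and which I expect to be the only genuinely delicate step.
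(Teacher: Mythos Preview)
Your approach is correct and follows the same overall strategy as the paper: use Day convolution to organize the coherences, then apply Lurie's monoidal localization criterion (HA~2.2.1.9) to descend to $(\infty,2)\Cat$. The paper differs from your proposal in two places. First, rather than invoking a promonoidal structure on $\Theta_2$, the paper passes to the smallest full subcategory $\langle\Theta_2\rangle \subset 2\Gaunt$ containing $\Theta_2$ and closed under $\otimes_G$; this is an honest monoidal $1$-category, so one can cite the standard Day convolution on $\PSh(\langle\Theta_2\rangle)$ and avoid the (mild) technical overhead of setting up promonoidal Day convolution in the $\infty$-categorical setting. Second, for the monoidality of $\Str$, the paper does not run a second localization argument: it appeals directly to the universal property of Day convolution, extending the monoidal inclusion $\langle\Theta_2\rangle \to 2\Gaunt$ to a cocontinuous monoidal functor $\PSh(\langle\Theta_2\rangle) \to 2\Gaunt$, which then factors through $(\infty,2)\Cat$ as $\Str$. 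Your second-localization route would also work, but the paper's is shorter and sidesteps the need to describe explicit generators for the class of $(\Str\circ L)$-local equivalences and to verify their stability under $\widetilde\otimes$.
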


\begin{proof}
Let $\left<\Theta_2 \right>$ be the smallest full subcategory of $2\Gaunt$ that includes $\Theta_2$ and that is stable under the Gray tensor product $\otimes_G$. As $\left<\Theta_2 \right>$ includes $\Theta_2$, it is again a dense subcategory of $(\infty,2)\Cat$. The associated fully faithful functor $(\infty,2)\Cat \rightarrow \PSh(\left<\Theta_2 \right>)$ given by restricting the Yoneda embedding admits a left adjoint
$$L:\PSh(\left<\Theta_2 \right>)\to (\infty,2)\Cat$$ that is given by $L'i^*$ where $L'$ is the left adjoint to the inclusion $(\infty,2)\Cat \rightarrow \PSh(\Theta_2)$.

By Day convolution (\cite[Proposition 4.8.1.10]{LurieHigherAlgebra} or \cite[Proposition 2.14]{glasman2016day}), the monoidal structure on $\left<\Theta_2\right>$ induces a monoidal structure on $\PSh(\left<\Theta_2\right>)$ whose tensor product will be denoted by $\widehat{\otimes}_G$. It has the property that  $\widehat{\otimes}_G$ commutes with colimits in both variables, and the Yoneda embedding $\left<\Theta_2\right> \to \PSh(\left<\Theta_2\right>)$ upgrades to a monoidal functor. If $X, Y \in \PSh(\left<\Theta_2\right>)$, then there is a natural equivalence $L(X \widehat{\otimes}_G Y) \simeq LX \otimes LY$ in $(\infty,2)\Cat$. This follows from the observation that both sides commute with colimits in the variables $X$ and $Y$, so that we may reduce checking for $X, Y \in \left< \Theta_2\right>$ by density. The desired conclusion then follows from \ref{theo:Gray tensor of globular sums}. We may  then use \ref{theo:Gray tensor product preserves colimits} to deduce that the localization functor $L$ is compatible with the monoidal structure on $\PSh(\left< \Theta_2 \right>)$ in the sense of \cite[Definition 2.2.1.6]{LurieHigherAlgebra}. Thus \cite[Proposition 2.2.1.9]{LurieHigherAlgebra} implies that the monoidal structure on $\PSh(\left<\Theta_2\right >)$ transfers to a monoidal structure on $(\infty,2)\Cat$ so that $L$ is monoidal.

To see that $\Str$ is monoidal, note that the monoidal inclusion $\left<\Theta_2\right> \to 2\Gaunt$ extends to a monoidal functor $\PSh(\left<\Theta_2\right>) \to 2\Gaunt$ by the universal property of Day convolution. This then derives to a monoidal functor $(\infty,2)\Cat \to 2\Gaunt$ whose underlying functor is precisely $\Str$.
\end{proof}

\begin{proposition}
\label{prop:auto of gray}
The functor $\otimes$, viewed as an object of $\fun((\infty,2)\Cat\times (\infty,2)\Cat,(\infty,2)\Cat)$, has no non-trivial endomorphism.
\end{proposition}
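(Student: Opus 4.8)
The statement to prove is:

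\begin{proposition}
The functor $\otimes$, viewed as an object of $\fun((\infty,2)\Cat\times (\infty,2)\Cat,(\infty,2)\Cat)$, has no non-trivial endomorphism.
\end{proposition}

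Let me think about how to prove this.

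The plan is to reduce the statement to its gaunt counterpart \ref{prop:auto of gray gaunt} by restricting to globular sums. Let $\phi : \otimes \to \otimes$ be an endomorphism. Since $\otimes$ preserves colimits in both variables by \ref{theo:Gray tensor product preserves colimits}, it belongs to the full subcategory of $\fun((\infty,2)\Cat \times (\infty,2)\Cat, (\infty,2)\Cat)$ spanned by those functors that preserve colimits separately in each variable. On this subcategory, restriction along the inclusion $\Theta_2 \times \Theta_2 \hookrightarrow (\infty,2)\Cat \times (\infty,2)\Cat$ is fully faithful: this follows from the universal property of $(\infty,2)\Cat$ as a (bi-)localization of $\PSh(\Theta_2)$ in each variable, together with the density of $\Theta_2$ in $(\infty,2)\Cat$, under which every $(\infty,2)$-category is canonically the colimit of its globular sums. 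Consequently, the space of endomorphisms of $\otimes$ agrees with the space of endomorphisms of the restriction $\otimes|_{\Theta_2 \times \Theta_2}$, viewed as a functor $\Theta_2 \times \Theta_2 \to (\infty,2)\Cat$, and it suffices to show the latter is trivial.

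Next I would identify this restriction with the gaunt Gray tensor product. By \ref{theo:Gray tensor of globular sums}, for globular sums $a$ and $b$ the canonical map $a \otimes b \to a \otimes_G b$ is an equivalence, so $\otimes|_{\Theta_2 \times \Theta_2}$ is naturally identified with the composite of $\otimes_G|_{\Theta_2 \times \Theta_2} : \Theta_2 \times \Theta_2 \to 2\Gaunt$ with the fully faithful nerve $N : 2\Gaunt \to (\infty,2)\Cat$ from \ref{def:discrete}. Because $N$ is fully faithful, the endomorphisms of this functor computed in $(\infty,2)\Cat$ coincide with the endomorphisms of $\otimes_G|_{\Theta_2 \times \Theta_2}$ computed in $2\Gaunt$. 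This reduces the claim to showing that $\otimes_G|_{\Theta_2 \times \Theta_2}$ admits no non-trivial endomorphism as a functor valued in gaunt $2$-categories.

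For this last point I would observe that the argument in the proof of \ref{prop:auto of gray gaunt} applies verbatim after restriction. Indeed, there the identity $\phi_{A,B} = \id$ is deduced purely from naturality squares whose source corners are tensors of the globular sums $[0]$, $[1]$ and $[1;1]$, and whose structure maps $[0] \to A$, $[1] \to A$, $[1;1] \to A$ (and likewise for $B$) select the cells of $A$ and $B$; the objects such as $[1] \otimes_G [1]$ enter only as the \emph{values} of the functor, never as subscripts of $\phi$. When $A$ and $B$ are themselves globular sums, all of these structure maps are morphisms of $\Theta_2$, and the components $\phi_{[0],[0]}$, $\phi_{[1],[0]}$, $\phi_{[0],[1]}$, $\phi_{[1],[1]}$, $\phi_{[1;1],[0]}$ and $\phi_{[0],[1;1]}$ are all evaluated at pairs of globular sums. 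Thus the whole argument takes place inside $\Theta_2 \times \Theta_2$ and yields $\phi_{a,b} = \id$ for all globular sums $a$ and $b$, which is what remains to be shown. I expect the main obstacle to be the first reduction: one must take care that restriction along $\Theta_2 \times \Theta_2$ is fully faithful on the functors that are cocontinuous in \emph{both} variables separately, rather than on those cocontinuous in a single variable, which is precisely why the bivariate cocontinuity of \ref{theo:Gray tensor product preserves colimits} is essential. Once this is secured, the remaining identifications are formal, since the gaunt argument was already built out of globular shapes.
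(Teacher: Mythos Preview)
Your proof is correct and follows essentially the same route as the paper: reduce to $\Theta_2 \times \Theta_2$ by density and bivariate cocontinuity, then use \ref{theo:Gray tensor of globular sums} and full faithfulness of $N$ to land in $2\Gaunt$. The only difference is the final step: the paper invokes density of $\Theta_2$ in $2\Gaunt$ a second time to identify $\map(\otimes_G|_{\Theta_2^{\times 2}},\otimes_G|_{\Theta_2^{\times 2}})$ with $\map(\otimes_G,\otimes_G)$ and then cites \ref{prop:auto of gray gaunt} directly, whereas you instead observe that the proof of \ref{prop:auto of gray gaunt} already only uses naturality at components indexed by globular sums---both are fine, and your variant has the mild advantage of not silently using that $\otimes_G$ is cocontinuous in each variable on $2\Gaunt$.
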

\begin{proof}  
As $\Theta_2$ is a dense subcategory of $2\Gaunt$ and $(\infty,2)\Cat$, the two functors
\begin{gather*}
\fun(2\Gaunt\times 2\Gaunt,2\Gaunt) \to \fun(\Theta_2 \times \Theta_2, 2\Gaunt), \\
\fun((\infty,2)\Cat\times (\infty,2)\Cat,(\infty,2)\Cat) \to \fun(\Theta_2 \times \Theta_2, (\infty,2)\Cat)
\end{gather*}
given by restriction are both fully faithful when restricted to functors that are cocontinuous in each variable.
Since the inclusion $2\Gaunt\to (\infty,2)\Cat$ is fully faithful, we obtain the following equivalences between mapping spaces:
\begin{align*}
\map_{\fun((\infty,2)\Cat^{\times 2},(\infty,2)\Cat)}(\otimes, \otimes) &\simeq \map_{\fun(\Theta_2^{\times 2}, (\infty,2)\Cat)}(\otimes|\Theta_2^{\times 2},\otimes|\Theta_2^{\times 2}) \\
&\simeq \map_{\fun(\Theta_2^{\times 2},2\Gaunt)}(\otimes_G|\Theta_2^{\times 2},\otimes_G|\Theta_2^{\times 2}) \\
&\simeq \map_{\fun(2\Gaunt^{\times 2},2\Gaunt)}(\otimes_G,\otimes_G).
\end{align*}
Hence  \ref{prop:auto of gray gaunt} implies that the space of endomorphisms of $\otimes$ is contractible.
\end{proof}

We record the following facts that will be of use later:

\begin{proposition}
For any two $(\infty,2)$-categories $\C$ and $\D$, there are natural equivalences
$$(\C\otimes \D)^{\op}\simeq \D^{\op}\otimes \C^{\op} ~~ \text{and} ~~ (\C\otimes \D)^\co \simeq \D^\co\otimes \C^\co.$$
\end{proposition}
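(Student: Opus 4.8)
The plan is to reduce the two claimed equivalences to the corresponding statements for the strict oplax Gray tensor product $\otimes_G$ on $2\Gaunt$, and then bootstrap to $(\infty,2)\Cat$ by density and cocontinuity, in exactly the manner exploited in the proof of \ref{prop:auto of gray}.

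First I would observe that both sides of each claimed equivalence are functors $(\infty,2)\Cat \times (\infty,2)\Cat \to (\infty,2)\Cat$ that preserve colimits separately in each variable. Indeed, the involutions $(-)^\op$ and $(-)^\co$ are equivalences, hence cocontinuous, while $\otimes$ preserves colimits in both variables by \ref{theo:Gray tensor product preserves colimits}; thus $(\C, \D) \mapsto (\C \otimes \D)^\op$ and $(\C, \D) \mapsto \D^\op \otimes \C^\op$, and likewise for $\co$, are each cocontinuous in both variables. Since $\Theta_2$ is dense in $(\infty,2)\Cat$, the restriction functor along $\Theta_2 \times \Theta_2 \hookrightarrow (\infty,2)\Cat \times (\infty,2)\Cat$ is fully faithful when restricted to such bi-cocontinuous functors, exactly as used in \ref{prop:auto of gray}. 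It therefore suffices to construct natural equivalences $(a \otimes b)^\op \simeq b^\op \otimes a^\op$ and $(a \otimes b)^\co \simeq b^\co \otimes a^\co$ for globular sums $a, b$, and full faithfulness will promote these to natural equivalences of the bi-cocontinuous extensions over all of $(\infty,2)\Cat \times (\infty,2)\Cat$.

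Next I would reduce this to a purely $2$-categorical statement. By \ref{theo:Gray tensor of globular sums} the canonical comparison $a \otimes b \to a \otimes_G b$ is an equivalence, naturally in the globular sums $a, b$, and the same applies to $b^\op \otimes a^\op$ and $b^\co \otimes a^\co$ since $(-)^\op$ and $(-)^\co$ preserve $\Theta_2$. Moreover, the involutions $(-)^\op, (-)^\co$ on $(\infty,2)\Cat$ are by construction induced from the reversal involutions on $\Theta_2$, which are the restrictions of the involutions on $2\Gaunt$; since the nerve $N$ of \ref{def:discrete} is fully faithful and these reversals are compatible, one has $N(X)^\op \simeq N(X^\op)$ and $N(X)^\co \simeq N(X^\co)$ for every gaunt $2$-category $X$. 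Combining these identifications reduces the problem to the natural isomorphisms
\[
(a \otimes_G b)^\op \cong b^\op \otimes_G a^\op, \qquad (a \otimes_G b)^\co \cong b^\co \otimes_G a^\co
\]
of gaunt $2$-categories.

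Finally, these last isomorphisms are the classical duality formulas for the strict oplax Gray tensor product, which I would either cite from the $2$-categorical literature or verify directly on generators: $a \otimes_G b$ is presented by cells $a' \otimes_G b'$ together with the interchange $2$-cells recorded in \cite[Section 3.1]{maehara2021gray}, the involution $(-)^\op$ reverses $1$-cells while fixing $2$-cells, $(-)^\co$ reverses $2$-cells while fixing $1$-cells, and in each case the accompanying swap of tensor factors accounts for the reversal of $1$-cell composition. I expect this gaunt-level check to be the main, if routine, obstacle: one must confirm that the combination of the relevant reversal with the factor swap carries the oplax interchanger of $a \otimes_G b$ to the oplax interchanger of $b^\op \otimes_G a^\op$ (respectively $b^\co \otimes_G a^\co$), so that both sides are formed from the \emph{oplax} tensor product $\otimes_G$ rather than its lax variant, and that the resulting isomorphisms are natural in $a$ and $b$. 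Granting this, the reductions above deliver the stated natural equivalences.
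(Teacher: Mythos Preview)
Your proposal is correct and follows essentially the same route as the paper: reduce to globular sums by density and bi-cocontinuity, use \ref{theo:Gray tensor of globular sums} to pass to the strict Gray tensor product, and invoke the classical duality isomorphisms there. The paper compresses this into two lines and cites \cite[Proposition A.20]{ara2020joint} for the strict statement rather than sketching the generator-level verification you outline.
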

\begin{proof}
We may reduce to the case that $\C$ and $\D$ are globular sums. As the Gray tensor product of globular sums is gaunt, the result follows from \cite[Proposition A.20]{ara2020joint}.
\end{proof}

\begin{lemma}
\label{lemma:square as a colimit}
The gaunt $2$-category $[1]\otimes [1]$ is equivalent to the colimit of the diagram
\[\begin{tikzcd}
	{[2]} & {[1;\{0\}]} & {[1;1]} & {[1;\{1\}]} & {[2]}
	\arrow["{d_1}"', from=1-2, to=1-1]
	\arrow[from=1-2, to=1-3]
	\arrow[from=1-4, to=1-3]
	\arrow["{d_1}", from=1-4, to=1-5]
\end{tikzcd}\]
computed in $\PSh(\Theta_2)$.
\end{lemma}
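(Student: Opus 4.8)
The plan is to exhibit the colimit $P$ of the displayed diagram in $\PSh(\Theta_2)$ together with a comparison map $c \colon P \to [1]\otimes[1]$, and then to verify that $c$ is an equivalence by a levelwise inspection. Recall from \ref{theo:Gray tensor of globular sums} that $[1]\otimes[1] \simeq [1]\otimes_G[1]$ is the gaunt $2$-category with objects $00, 10, 01, 11$, four boundary $1$-cells $00\to 10$, $10\to 11$, $00\to 01$, $01\to 11$, two diagonal $1$-cells $\phi_0, \phi_1 \colon 00 \to 11$ given by the two directed composites, and a single generating $2$-cell $\phi_0 \Rightarrow \phi_1$. To build the cocone I would send each copy of $[2]$ to one of the two directed paths, via $0\mapsto 00$, $1 \mapsto 10$ (resp.\ $1\mapsto 01$), $2\mapsto 11$, so that the long edge $d_1$ lands on $\phi_0$ (resp.\ $\phi_1$); and I would send the globe $[1;1]$ to the diagonal, carrying its two parallel $1$-cells to $\phi_0$ and $\phi_1$ and its generating $2$-cell to $\phi_0\Rightarrow\phi_1$. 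After fixing the orientation of the two inclusions $[1;\{0\}], [1;\{1\}]\to[1;1]$ accordingly, these maps agree on the common sources, so that they assemble into the desired map $c$ out of the colimit.

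The second step is to compute $P$ explicitly, and here the key observation is that all four structure maps of the diagram are monomorphisms in $\PSh(\Theta_2)$. Indeed, $d_1 \colon [1]\to[2]$ is the inclusion of the full subposet $\{0,2\}$, and each $[1;\{i\}]\to[1;1]$ is the inclusion of one of the two parallel $1$-cells; both are faithful and injective on cells, hence monomorphisms already in $\Theta_2$, and therefore levelwise injective as maps of representable presheaves. Consequently the homotopy colimit defining $P$ may be computed as the ordinary levelwise colimit of the underlying set-valued presheaves: writing the diagram as the iterated pushout $[2]\sqcup_{[1]}[1;1]\sqcup_{[1]}[2]$ and using that a homotopy pushout of sets along an injection is the ordinary pushout, one sees that $P$ is discrete, with no spurious higher homotopy.

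It then remains to check that $c$ is a levelwise bijection. Since both $P$ and $[1]\otimes[1]$ are discrete, this reduces to identifying the naive gluing of the underlying shapes with the generating cells of $[1]\otimes_G[1]$. Concretely, gluing the two $2$-simplices to the globe $[1;1]$ along their long edges produces exactly four objects (the two endpoints of the globe and the two midpoints of the simplices), the four boundary arrows, the two diagonal composites identified with the globe's parallel $1$-cells, and the single $2$-cell between them; matching these against the description of the generating cells $a\otimes_G b$ of $[1]\otimes_G[1]$ shows that $c$ is a bijection on every value $P([n;\overline{m}])\to ([1]\otimes[1])([n;\overline{m}])$.

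The main obstacle is the passage from the homotopy colimit in $\PSh(\Theta_2)$ to the naive combinatorial gluing: a priori the colimit of representables could carry higher homotopy or fail to be an $(\infty,2)$-category, so that no strict identification would be possible. The monomorphism observation of the second step is precisely what rules this out, reducing the statement to the finite verification of the last step, namely the recognition of the oriented square $[1]\otimes_G[1]$ as two triangles glued along a diagonal $2$-cell.
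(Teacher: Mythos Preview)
Your argument is correct and follows essentially the same strategy as the paper's proof: reduce the homotopy colimit in $\PSh(\Theta_2)$ to an ordinary colimit of set-valued presheaves, and then carry out the finite combinatorial identification with $[1]\otimes_G[1]$. The paper compresses the reduction step into a citation of \cite[Lemma 2.1.1.5]{loubaton2024categorical}, whereas you spell it out via the observation that the attaching maps $d_1\colon[1]\to[2]$ and $[1;\{i\}]\to[1;1]$ are monomorphisms, so that the iterated pushout is already a homotopy pushout and the result stays discrete; the final cell-matching is the same ``readily verified'' step the paper leaves implicit.
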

\begin{proof}
On account of \cite[Lemma 2.1.1.5]{loubaton2024categorical}, we may compute this colimit in $\PSh_\mathrm{Set}(\Theta_2)$, where it is readily verified.
\end{proof}

\begin{proposition}
\label{lemma:globular sum as quotient of Gray}
Let $n,m$ be non-negative integers. Then there are natural pushout squares
\[\begin{tikzcd}
	{\coprod_{k\leq n}\{k\}\otimes[m]} & {[n]\otimes[m]} & {\coprod_{k\leq n}[m]^\op\otimes \{k\}} & {[m]^\op\otimes[n]} \\
	{\coprod_{k\leq n}\{k\}\otimes[0]} & {[n;m]}, & {\coprod_{k\leq n}[0]\otimes \{k\}} & {[n;m]}
	\arrow[from=1-1, to=1-2]
	\arrow[from=1-1, to=2-1]
	\arrow[from=1-2, to=2-2]
	\arrow[from=1-3, to=1-4]
	\arrow[from=1-3, to=2-3]
	\arrow[from=1-4, to=2-4]
	\arrow[from=2-1, to=2-2]
	\arrow[from=2-3, to=2-4]
\end{tikzcd}\]
in $(\infty,2)\Cat$.
\end{proposition}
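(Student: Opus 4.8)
The plan is to prove the first pushout square and then deduce the second by duality. Since the localization functor $L'\colon\PSh(\Theta_2)\to(\infty,2)\Cat$ left adjoint to the inclusion preserves colimits and restricts to the identity on $(\infty,2)$-categories, and since all four corners of the square are $(\infty,2)$-categories, it suffices to exhibit the square as a pushout in $\PSh(\Theta_2)$: if the pushout computed there already lands on the local object $[n;m]$, then applying $L'$ shows that it is also the pushout in $(\infty,2)\Cat$. By \ref{theo:Gray tensor of globular sums} we may identify $[n]\otimes[m]$ with the gaunt Gray tensor product $[n]\otimes_G[m]$, whose objects are the pairs $(i,j)$ with $i\in[n]$ and $j\in[m]$; the map $\coprod_{k\leq n}\{k\}\otimes[m]\to[n]\otimes[m]$ is then the inclusion of the $n+1$ pairwise disjoint ``boundary fibres'' $\{(k,j):j\in[m]\}$, and is in particular a monomorphism. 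Consequently the pushout is a homotopy pushout, and as all four presheaves are discrete (being nerves of gaunt $2$-categories) we may compute it levelwise in $\PSh_{\mathrm{Set}}(\Theta_2)$ as a pushout of sets, exactly as in the proof of \ref{lemma:square as a colimit} via \cite[Lemma 2.1.1.5]{loubaton2024categorical}.

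It then remains to check that collapsing the $n+1$ boundary fibres of $[n]\otimes_G[m]$ to points yields precisely $[n;m]$. I would organize this combinatorial identification by a double induction on $(n,m)$, writing $Q(n,m)$ for the pushout in question. Using that $\otimes$ preserves colimits in both variables (\ref{theo:Gray tensor product preserves colimits}) together with the spine decompositions $[n]\simeq[1]\sqcup_{[0]}[n-1]$ and $[m]\simeq[1]\sqcup_{[0]}[m-1]$, a colimit-interchange (Fubini) argument splits $Q(n,m)$ along the shared middle fibre. In the $[n]$-direction that shared fibre is itself collapsed, so $Q(n,m)\simeq Q(1,m)\sqcup_{[0]}Q(n-1,m)$, matching the horizontal spine decomposition $[n;m]\simeq[1;m]\sqcup_{[0]}[n-1;m]$. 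In the $[m]$-direction (with $n=1$) the shared arrow $[1]\otimes\{\ast\}$ runs between the two collapsed fibres and therefore survives, so $Q(1,m)\simeq Q(1,1)\sqcup_{[1]}Q(1,m-1)$, matching the vertical Segal decomposition $[1;m]\simeq[1;1]\sqcup_{[1]}[1;m-1]$. This reduces everything to the base case $n=m=1$, where one checks directly in $\PSh_{\mathrm{Set}}(\Theta_2)$ that collapsing the two side edges $\{0\}\otimes[1]$ and $\{1\}\otimes[1]$ of the lax square $[1]\otimes[1]$ identifies its four vertices in pairs and turns its two composite arrows into a pair of parallel arrows joined by the single $2$-cell, i.e.\ produces $[1;1]$; this is the operation inverse to \ref{lemma:square as a colimit}.

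Finally, the second pushout square follows by applying the involution $(-)^\op$ to the first. As $(-)^\op$ is an equivalence it preserves pushouts, and using the duality equivalences $(\C\otimes\D)^\op\simeq\D^\op\otimes\C^\op$ established above together with $[n]^\op\cong[n]$, $\{k\}^\op\cong\{n-k\}$ and $[n;m]^\op\cong[n;m]$ (the last via order reversal of the objects, which does not affect the hom-categories), the image of the first square is exactly the second after reindexing the coproduct by $k\mapsto n-k$.

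The main obstacle is the base case together with the compatibility of the fibre collapse with the two different spine decompositions: this is precisely the point at which the genuine combinatorics of the Gray tensor product enters, since one must verify that the horizontal gluing object remains a point $[0]$ while the vertical gluing object becomes the arrow $[1]$. Everything else is formal colimit bookkeeping given \ref{theo:Gray tensor product preserves colimits} and \ref{theo:Gray tensor of globular sums}.
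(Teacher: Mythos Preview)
Your proposal is correct and follows essentially the same route as the paper: reduce to the base case $n=m=1$ using that all four corners are cocontinuous in both simplicial variables (your explicit spine induction is just an unpacking of this density argument), and then settle the base case via \ref{lemma:square as a colimit}. The only cosmetic difference is that the paper deduces the second square from the first using $(-)^{\co}$ rather than $(-)^{\op}$; both dualities work here since $[m]^{\op}\cong[m]^{\co}=[m]$ and $[n;m]^{\op}\cong[n;m]\cong[n;m]^{\co}$.
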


\begin{proof}
We focus on the left-hand square, the second is obtained by applying the duality $(\uvar)^\co$. 
The right vertical arrow $[n]\otimes[m]\to [n;m]$ was constructed in  \cite[Construction 3.57]{CompJaco}. Since the vertices of the squares are parametrized by functors $\Delta \times \Delta \to (\infty,2)\Cat$ that are cocontinuous in each variable, we can reduce to the case that $n=1$ and $m=1$. This follows from \ref{lemma:square as a colimit}.
\end{proof}

As promised, we conclude this section by proving the equivalence between the Gray tensor product $\otimes_L$ and $\otimes$. 

\begin{proposition}
\label{prop:comparaison of gray tensor product}
There is a unique invertible natural transformation $\otimes_L\to \otimes$.
\end{proposition}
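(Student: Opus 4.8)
The plan is to show that the whole space of natural transformations $\otimes_L \to \otimes$ is contractible, and then to check that its unique point is invertible. The crux is that both operations are cocontinuous in each variable and both agree with the gaunt Gray tensor product $\otimes_G$ on globular sums, so that the comparison is entirely controlled by the restriction to $\Theta_2 \times \Theta_2$, exactly as in the proof of \ref{prop:auto of gray}.

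First I would record the two structural inputs for $\otimes_L$. By its construction in \cite{Effectivity} as the intelligent $2$-truncation of the Gray tensor product on $(\infty,\omega)$-categories, the functor $\otimes_L$ preserves colimits in each variable, and its restriction along $\Theta_2 \times \Theta_2 \to (\infty,2)\Cat \times (\infty,2)\Cat$ is naturally equivalent to the functor $(a,b)\mapsto a\otimes_G b$. Since $\otimes$ enjoys these same two properties by construction together with \ref{theo:Gray tensor product preserves colimits} and \ref{theo:Gray tensor of globular sums}, both functors lie in the full subcategory of $\fun((\infty,2)\Cat\times (\infty,2)\Cat,(\infty,2)\Cat)$ spanned by functors that are cocontinuous in each variable, on which restriction to $\Theta_2^{\times 2}$ is fully faithful.

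Running the same chain of equivalences as in \ref{prop:auto of gray}, but now with source $\otimes_L$, yields
$$
\map(\otimes_L, \otimes) \simeq \map_{\fun(\Theta_2^{\times 2}, (\infty,2)\Cat)}\bigl(\otimes_L|_{\Theta_2^{\times 2}},\,\otimes|_{\Theta_2^{\times 2}}\bigr) \simeq \map_{\fun(2\Gaunt^{\times 2},2\Gaunt)}(\otimes_G,\otimes_G),
$$
where the first equivalence is the full faithfulness of restriction and the second uses that both restrictions are identified with $\otimes_G$. By \ref{prop:auto of gray gaunt} the right-hand side is contractible, so there is a unique natural transformation $\alpha : \otimes_L \to \otimes$. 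To see that $\alpha$ is invertible, I would observe that, under the identifications above, its restriction to $\Theta_2^{\times 2}$ is the identity of $\otimes_G$, hence an equivalence on every pair of globular sums. As $\alpha$ is a transformation between functors cocontinuous in each variable and every $(\infty,2)$-category is a colimit of globular sums, it follows that $\alpha_{\C,\D}$ is an equivalence for all $\C,\D$, which gives the claim.

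The main obstacle is verifying the two structural inputs for $\otimes_L$, namely its cocontinuity in each variable and the identification of its value on globular sums with $\otimes_G$. Both require unwinding the definition of $\otimes_L$ through the intelligent $2$-truncation and checking that this truncation disturbs neither property; the relevant facts are established in \cite{Effectivity}, and once they are in hand the remainder of the argument is purely formal.
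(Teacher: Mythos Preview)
Your overall strategy coincides with the paper's: reduce to $\Theta_2^{\times 2}$ by cocontinuity and density, and compare both functors to $\otimes_G$ there. The uniqueness argument via \ref{prop:auto of gray gaunt} is exactly what the paper uses.

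The gap is in your second structural input. You assert that $\otimes_L|_{\Theta_2^{\times 2}}$ is naturally equivalent to $(a,b)\mapsto a\otimes_G b$ and attribute this to \cite{Effectivity}, but that reference only gives the $(\infty,\omega)$-categorical statement $a\otimes_L^\omega b \simeq a\otimes_G^\omega b$ for globular sums. What remains is to show that the intelligent $2$-truncation $\tau_2^i(a\otimes_G^\omega b)$ is a gaunt $2$-category, so that it agrees with $a\otimes_G b$. This is not automatic: $a\otimes_G^\omega b$ generally has nontrivial $3$-cells, and inverting them could in principle create non-identity invertible $2$-cells. The paper does not find this in \cite{Effectivity} either; it proves it directly by writing $\tau_2^i([1;n]\otimes_L^\omega[1;m])$ as an explicit colimit of gaunt $2$-categories (using \cite[Proposition 1.4.22]{Effectivity} and \ref{lemma:square as a colimit}), then invoking a cartesian-transformation argument to see that this colimit can be computed in set-valued presheaves, hence is discrete. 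That computation is the actual content of the proposition; your proposal identifies the right reduction but leaves precisely this step unaddressed.
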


\begin{proof}
We recall that the tensor product $\otimes_L$ is obtained as the composite
$$(\infty,2)\Cat\times(\infty,2)\Cat\to (\infty,\omega)\Cat\times(\infty,\omega)\Cat\xrightarrow{\otimes^\omega_L} (\infty,\omega)\Cat\xrightarrow{\tau_2^i} (\infty,2)\Cat, $$
where $\otimes_L^{\omega}$ denotes the Gray tensor product on the $\infty$-category of $(\infty,\omega)$-categories which was defined in \cite[Construction 1.4.1]{Effectivity}. Here $\tau^i_2$ is the intelligent $2$-truncation functor of \cite[Definition 1.1.5]{Effectivity}.

The unicity of the comparison will follow from \ref{prop:auto of gray}. It remains to show the existence.
On account of \cite[Theorem 1.4.14]{Effectivity}, for any globular sums $a,b$ in $\Theta_2$,  $a\otimes_L^{\omega}b$ coincides with $a\otimes_G^\omega b$, where $\otimes_G^\omega$ denotes the Gray tensor product for gaunt $\omega$-categories. Recall that the Gray tensor product on gaunt $2$-categories is the $2$-truncation of the Gray tensor product on gaunt $\omega$-categories (see Appendix A of \cite{ara2020joint}). 

By \ref{theo:Gray tensor of globular sums}, this then induces a comparison functor $\otimes_L\to \otimes$. As these two bifunctors preserve colimits in both variables, it is sufficient to show that for any $0\leq n,m$, the component $\tau_2^i([1;n]\otimes^{\omega}_L[1;m])\to [1;n]\otimes[1;m]$ is an equivalence. Since the $2$-truncation commutes with the strictification functor, it thus suffices to demonstrate that $\tau_2^i([1;n]\otimes^{\omega}_L[1;m])$ is a gaunt $2$-category.

It follows from \cite[Lemma 1.6.12]{Effectivity} that $\tau^i_1(A \otimes_L^{\omega} B)\simeq \tau^i_1(A)\times \tau^{i}_1(B)$ for all $(\infty,\omega)$-categories $A$ and $B$. Combining this with \cite[Proposition 1.4.22]{Effectivity}, this then implies that $\tau_2^i([1;m]\otimes^{\omega}_L[1;n])$ is canonically the colimit of the diagram
\[\begin{tikzcd}
	{[1;[n]\times\{0\}\times [m]]} && {[1;[n]\times\{1\}\times [m]]} \\
	{[2;n,m]} & {[1;[n]\times[1]\times [m]]} & {[2;m,n]}
	\arrow[from=1-1, to=2-1]
	\arrow[from=1-1, to=2-2]
	\arrow[from=1-3, to=2-2]
	\arrow[from=1-3, to=2-3]
\end{tikzcd}\]
of gaunt 2-categories in $(\infty,2)\Cat$. 
Let $F_{n,m} : M \to 2\Gaunt$ be the functor that shapes the above diagram.
On account of \ref{lemma:square as a colimit}, there is an equivalence 
$$
\tau_2^i([1]\otimes^{\omega}_L[1])\simeq \colim(M \xrightarrow{F_{0,0}} 2\Gaunt \to (\infty,2)\Cat \to \PSh(\Theta_2)).
$$
in $\PSh(\Theta_2)$. As the natural transformation $F_{n,m}\to F_{0,0}$ is cartesian, \cite[Proposition 2.2.1.27]{loubaton2024categorical} implies that there is an equivalence 
$$
\tau_2^i([1;n]\otimes^{\omega}_L[1;m])\simeq \colim(M \xrightarrow{F_{n,m}} 2\Gaunt \to (\infty,2)\Cat \to \PSh(\Theta_2))
$$
in $\PSh(\Theta_2)$.
The composite $2\Gaunt \to (\infty,2)\Cat \to \PSh(\Theta_2)$ factors  through the inclusion $\PSh_{\mathrm{Set}}(\Theta_2) \to \PSh(\Theta_2)$ that is induced by the functor $\mathrm{Set} \to \S$; see \ref{def:discrete}. Now \cite[Lemma 2.1.1.5]{loubaton2024categorical} implies that
\begin{align*}
\tau_2^i([1;m]\otimes^{\omega}_L[1;n]) &\simeq \colim(M \xrightarrow{F} 2\Gaunt \to \PSh_{\mathrm{Set}}(\Theta_2) \to \PSh(\Theta_2)) 
\\&\simeq \colim(M \xrightarrow{F} 2\Gaunt \to \PSh_{\mathrm{Set}}(\Theta_2)) 
\end{align*}
in $\PSh(\Theta_2)$. So $\tau_2^i([1;m]\otimes^{\omega}_L[1;n])$ is discrete, and thus gaunt.
\end{proof}

\subsection{Double $\infty$-categories}\label{subsection:dbl cats} We now recall the necessary material on double $\infty$-categories.

\begin{definition}
    A bisimplicial space $\P : \Delta^\op \times \Delta^\op \to \S$ is called a \textit{double $\infty$-category} if:
    \begin{itemize}
        \item the restrictions $\P([n],-) : \Delta^\op \to \S$ is a complete Segal space for every $n \geq 0$
        \item the restriction $\P(-,[m]) : \Delta^\op \to \S$ is a Segal space for every $m \geq 0$. 
    \end{itemize}
    We will write $\DbliCat \subset \PSh(\Delta^{\times 2})$ for the full subcategory spanned by the double $\infty$-categories.
\end{definition}

\begin{notation}
    The Yoneda embedding $\Delta^{\times 2} \to \PSh(\Delta^{\times 2})$ factors through $\DbliCat$. We will write $\langle n,m \rangle$ for the double $\infty$-category given by the image of $([n],[m])$.
    
    This was previously denoted by $[n,m]$ in \cite{JacoThesis} and \cite{CompJaco}, but we have opted for this notation here to make it better distinguishable from the objects of $[\Delta;\Delta]$.
\end{notation}

\begin{remark}
    Equivalently, a double $\infty$-category $\P$ is the datum of a functor $\P_\bullet :  \Delta^\op \to \infty\Cat$ so that the canonical functor $\P_n \to \P_1 \times_{\P_0} \dotsb \times_{\P_0} \P_1$ is an equivalence, where $\P_n$ is defined by the formula: $$\map_{\infty\Cat}([m], \P_n) =  \map_{\DbliCat}(\langle n,m \rangle, \P).$$
\end{remark}

\begin{definition}
   A double $\infty$-category $\P$ is called \textit{locally complete} if for every two objects $x,y \in \P$, the Segal space  
   $
   \P(-,[1]) \times_{\P(-,[0]) \times \P(-,[0])} \{(x,y)\}
   $
   is complete. More strongly, we will say that $\P$ is \textit{complete} if $\P(-,[1])$ is complete. We will write $\CDbliCat \subset \DbliCat$ for the full subcategory spanned by the complete double $\infty$-categories. 
\end{definition}

\begin{remark}
    The subcategory $\CDbliCat \subset \DbliCat$ is reflective. 
\end{remark}

\begin{remark}
    The $\infty$-categories $\DbliCat$ and $\CDbliCat$ (and also, the full subcategory spanned by the locally complete double $\infty$-categories) inherit the cartesian product from $\PSh(\Delta^{\times 2})$, and they are cartesian closed on account of \cite[Proposition 3.44]{CompJaco}.
\end{remark}

\begin{construction}\label{cons:dualities for double}
    We recall the following duality operations from \cite[Section 3]{CompJaco}.
There are involutions $\hop := \op \times \id, \vop := \id \times \op :\Delta\times \Delta \to \Delta \times \Delta$  that induce functors $\hop^*, \vop^* : \PSh(\Delta\times \Delta) \to \PSh(\Delta\times \Delta)$ and restrict to the \textit{horizontal} and \textit{vertical opposite} involutions $$(-)^\hop, (-)^\vop :\DbliCat\to \DbliCat,$$
respectively. 
We will also consider the \textit{transpose} duality. This is defined similarly. The functor $\tp : \Delta \times \Delta \to \Delta \times \Delta$ that swaps the coordinates, induces a functor $\tp^* : \PSh(\Delta\times \Delta)\to\PSh(\Delta\times\Delta)$ that restricts to an involution
$$
(-)^\tp : \CDbliCat \to \CDbliCat.
$$
We note that the strong completeness assumption is important here.
\end{construction}

\begin{notation}\label{not: vertical horizontal inclusion}
    We recall from \cite[Subsection 3.3]{CompJaco} that there are fully faithful functors
    $
    (-)_h, (-)_v : \PSh([\Delta;\Delta]) \to \PSh(\Delta^{\times 2})
    $
    that preserve colimits, and are determined by the fact that there are natural pushout squares 
    \[
    \begin{tikzcd}
	{\coprod_{k=0}^n\langle 0,m \rangle} & {\langle n,m \rangle} & {\coprod_{k\leq n}\langle m, 0 \rangle}^\hop & {\langle m,n\rangle}^\hop \\
	{\coprod_{k=0}^n\langle 0, 0 \rangle} & {[n;m]_h} & {\coprod_{k\leq n}\langle 0, 0\rangle} & {[n;m]_v}.
	\arrow[from=1-1, to=1-2]
	\arrow[from=1-1, to=2-1]
	\arrow[from=1-2, to=2-2]
	\arrow[from=1-3, to=1-4]
	\arrow[from=1-3, to=2-3]
	\arrow[from=1-4, to=2-4]
	\arrow[from=2-1, to=2-2]
	\arrow[from=2-3, to=2-4]
	\end{tikzcd}
	\]
	Since $[\Delta;\Delta]$ is dense in $(\infty,2)\Cat$, we may restrict these functors to $(\infty,2)$-categories. It follows from \cite[Subsection 3.4]{CompJaco} 
	that the functors  restrict to fully faithful and colimit preserving functors $$(-)_h, (-)_v : (\infty,2)\Cat \to \CDbliCat.$$
	These are called the \textit{horizontal} and \textit{vertical inclusion} functors respectively.
\end{notation}

\section{The directed \v{C}ech nerve and the squares construction}\label{section:cech and sq}

Given a morphism $f:A\to B$ between spaces, its \v{C}ech nerve, denoted by $\Cn(f)$, is defined to be the simplicial space that in degree $n$ corresponds to the $n$-fold iterated pullback
$$\Cn(f)_n = A\times_BA\times_B\dotsb \times_BA\;$$
see \cite[Subsection 6.1.2]{HTT}.
We will focus here on the categorification of this notion of \v{C}ech nerve that was introduced by the first author in \cite{Effectivity}. As a special case, we will recover the \textit{squares construction} that plays a fundamental role in the $(\infty,2)$-categorical set up of Gaitsgory and Rozenblyum \cite[Section 10.4]{GR}.

\begin{definition}
We will consider the full subcategory
$
\Filt \subset \fun([1], (\infty,2)\Cat)
$
of \textit{(1,1)-filtrations}, or simply, \textit{filtrations}, that is spanned by those functors $\C \to \D$ so that $\C$ is a $(\infty,1)$-category.

We will write 
$
\Filt^\twoheadrightarrow \subset \Filt
$ 
for the full subcategory spanned by the \textit{eso} (\textit{essentially surjective on objects}) filtrations $\C \to \D$, i.e. for which $\C \to \tau_{1}\D$ is essentially surjective.
\end{definition}

\begin{remark}\label{rem: trivial filtration}
    The functor $\Filt \to (\infty,2)\Cat$ given by evaluation at $1$ admits a right adjoint that carries an $(\infty,2)$-category $\C$ to the filtration $\tau_1\C \to \C$.
\end{remark}

\begin{construction}\label{cons: realization-nerve adjunction}
Let us consider the bicosimplicial filtration
$$
\textstyle \Delta \times \Delta \rightarrow \Filt : ([n],[m]) \mapsto  (\tau_0[n] \otimes [m] \rightarrow [n] \otimes [m]).
$$
The $\infty$-category $\Filt$ is cocomplete, and hence we obtain an  adjunction
$
 \PSh(\Delta \times \Delta) \rightleftarrows \Filt : \Cn(-)
$
so that the left functor is Kan extended from the bicosimplicial filtration.
For a filtration $f : \C \rightarrow \D$, $\Cn(f)$ is computed level-wise by 
$$
\textstyle \Cn(f)_{n,m} = \map_\Filt(\tau_0[n] \otimes [m] \rightarrow [n] \otimes [m], \C \xrightarrow{f} \D),
$$
and thus sits in a pullback square \[\begin{tikzcd}[ampersand replacement=\&]
	{\Cn(f)_{n,m}} \& \map_{(\infty,2)\Cat}([n] \otimes [m], \D) \\
	\prod_{k=0}^n\map_{(\infty,2)\Cat}(\{k\} \otimes [m], \C) \& \prod_{k=0}^n\map_{(\infty,2)\Cat}(\{k\} \otimes [m], \D).
	\arrow[from=1-1, to=1-2]
	\arrow[from=1-1, to=2-1]
	\arrow[from=1-2, to=2-2]
	\arrow[from=2-1, to=2-2]
\end{tikzcd}\]
Since the Gray tensor product is cocontinuous in both variables, $\Cn(f)$ is a double $\infty$-category, and the above adjunction derives 
to an adjunction
$$
|-| :  \DbliCat \rightleftarrows \Filt : \Cn(-).
$$
If $f$ is a filtration, then the double $\infty$-category $\Cn(f)$ is called the \textit{directed \v{C}ech nerve of $f$}. If $\P$ is a double $\infty$-category, then $|\P|$ is called the \textit{realization of $\P$}. The domain of $|\P|$ is always given by the vertical $\infty$-category $\P_0$. 

After composing the above realization-nerve adjunction with the adjunction of \ref{rem: trivial filtration}, we obtain another adjunction 
$$
\Gr : \DbliCat \rightleftarrows (\infty,2)\Cat : \Sq.
$$
The left adjoint carries a double $\infty$-category $\P$ to the codomain $|\P|_1$ of its realization. The right adjoint carries an $(\infty,2)$-category $\C$ to its \textit{double $\infty$-category of squares in $\C$}, and it is level-wise described by 
    $$\map_{\DbliCat}(\langle n,m\rangle, \Sq(\C))= \map_{(\infty,2)\Cat}([n] \otimes [m], \C).$$
\end{construction}

\begin{example}
    \label{exemple:realization of vertical and horizontal inclusion}
    Let $\C$ be an $(\infty,2)$-category. In \cite[Construction 3.57]{CompJaco}, canonical inclusions
    $\C_h \to \Sq(\C)$ and $\C_v \to \Sq(\C)$ were constructed. These inclusions are adjoint to equivalences $\Gr(\C_h) \simeq \C$ and $\Gr(\C_v) \simeq \C$ as argued in \cite[Proposition 3.58]{CompJaco}. One can use these observations to show that 
    $|\C_h| \simeq \tau_0\C \to \C$ and  $|\C_v| \simeq \tau_1\C \to \C$.
\end{example}

\begin{remark}
\label{remark:Gr and completion}
It is readily verified that the double $\infty$-category $\Sq(\C)$ is always complete, 
for every $(\infty,2)$-category $\C$. By adjunction, this implies that $\Gr$ factors through the completion functor $\DbliCat \to \CDbliCat$ that is left adjoint to the inclusion. In particular, the restriction $\Gr: \CDbliCat \to (\infty,2)\Cat$ preserves colimits.
\end{remark}

\subsection{The universal property of the \v{C}ech nerve and squares}\label{subsection:uni prop} Using the results of \cite{Effectivity}, we may characterize the directed \v{C}ech nerve in terms of \textit{companions}. This notion makes an appearance in the work of Gaitsgory--Rozenblyum as well \cite[Subsection 10.5.1]{GR}, but under a different name. 

\begin{definition}\label{def:comp}
	Let $f : x \rightarrow y$ be a vertical arrow of a double $\infty$-category $\P$. A horizontal arrow 
	$F : x \rightarrow y$ of $\P$ is called the \textit{companion} of $f$  
	if there exist cells
	\[
		\eta = \begin{tikzcd}
			x\arrow[d, equal] \arrow[r, equal, ""name=f] & x\arrow[d, "f"] \\
			x \arrow[r, "F"'name=t] & y
			\arrow[from=f,to=t,Rightarrow, shorten <= 6pt, shorten >= 6pt]
		\end{tikzcd} 
		\quad 
		\text{and}
		\quad
		\epsilon = \begin{tikzcd}
			x \arrow[r, "F"name=f]\arrow[d, "f"' ] & y \arrow[d, equal] \\
			y \arrow[r, equal, ""'name=t] & y
			\arrow[from=f,to=t,Rightarrow, shorten <= 6pt, shorten >= 6pt]
		\end{tikzcd}
	\] 
	that satisfy the following two \textit{triangle identities}:
	\[
		\begin{tikzcd}
			x\arrow[d, equal] \arrow[r, equal, ""'name=h1] & x\arrow[d, "f"] \\
			x \arrow[r, ""name=h2]\arrow[d, "f"'] & y \arrow[d, equal] \\
			y \arrow[r, equal, ""'name=h3] & y
			\arrow[from=h1, to=h2, phantom, "\scriptstyle\eta"]
			\arrow[from=h2, to=h3, phantom, "\scriptstyle\epsilon"]
		\end{tikzcd}
		\simeq
		\begin{tikzcd}
			x\arrow[r,equal] \arrow[d, "f"'name=t1] & x \arrow[d, "f"name=f1] \\
			y \arrow[r, equal]  & y,
			\arrow[from=f1, to=t1, equal, shorten <= 14pt, shorten >= 14pt]
		\end{tikzcd}
		\quad 
		\begin{tikzcd}
			x \arrow[r, equal, ""name=h1]\arrow[d, equal] & x \arrow[d] \arrow[r, "F"name=h3] & y\arrow[d, equal] \\
			x \arrow[r, "F"'name=h2] & y \arrow[r, equal, ""'name=h4] & y
			\arrow[from=h1, to=h2, phantom, "\scriptstyle\eta"]
			\arrow[from=h3, to=h4, phantom, "\scriptstyle\epsilon"]
		\end{tikzcd}
		\simeq
		\begin{tikzcd}
			x\arrow[d,equal] \arrow[r, "F"name=f1] & y \arrow[d,equal] \\
			x \arrow[r, "F"'name=t1] & y.
			\arrow[from=f1, to=t1, equal, shorten <= 12pt, shorten >= 10pt]
		\end{tikzcd}
	\]
    The cells $\eta$ and $\epsilon$ are called the \textit{companionship unit} and \textit{counit} respectively.
\end{definition}

\begin{remark}
    It is shown by the second author in \cite{CompJaco} that companions are unique up to contractible choice if they exist, and that $\Sq([1])$ is the universal double $\infty$-category that contains a single companionship. Moreover, it is shown that the spaces of companionship units and counits for a vertical arrow with a companion are always contractible.
\end{remark}

\begin{remark}
    Examples of companions may be found in \cite[Section 4]{CompJaco}. As explained in \cite[Example 4.5]{CompJaco}, every vertical arrow in the squares construction $\Sq(\C)$ on an $(\infty,2)$-category $\C$ admits a companion, and each horizontal arrow is a companion. This in particular implies that \v{C}ech nerves of filtrations admit all companions; see also \cite[Lemma 2.6.7]{Effectivity}.
\end{remark}

\begin{remark}
    One may readily verify that if $f:x\to y$ and $g:y\to z$ are vertical arrows  in a double $\infty$-category $\P$ that admit companions $F$ and $G$ respectively, then the composite $GF$ is the companion of $gf$.
\end{remark}

We will show the following characterization:

\begin{theorem}\label{thm:uni prop of cech}
    Let $\P$ be a double $\infty$-category. Then restriction along the unit $\P \to \Cn|\P|$ gives a monomorphism 
    $$
    \map_{\DbliCat}(\Cn|\P|, \Q) \to \map_{\DbliCat}(\P, \Q)
    $$
    for every double $\infty$-category $\Q$. The image is given by the subspace of functors $\P \to \Q$ so that every vertical arrow of $\P$ is carried to a vertical arrow in $\Q$ that admits a companion.
\end{theorem}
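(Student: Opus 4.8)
The plan is to reduce the statement to the case where the target is itself a directed \v{C}ech nerve, where the adjunction $|-|\dashv\Cn$ of \ref{cons: realization-nerve adjunction} makes everything transparent, and then to transfer the conclusion to an arbitrary $\Q$ by comparing it with its unit $\Q\to\Cn|\Q|$. Before doing so I would dispose of the easy inclusion: any functor $G:\Cn|\P|\to\Q$ restricts along the unit $\eta_\P:\P\to\Cn|\P|$ to a functor sending every vertical arrow of $\P$ to a companionable vertical arrow of $\Q$. Indeed, every vertical arrow of the \v{C}ech nerve $\Cn|\P|$ admits a companion, and the cells $\eta,\epsilon$ together with the triangle identities of \ref{def:comp} are preserved by every functor of double $\infty$-categories, so companions are carried to companions. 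Thus the image of restriction is contained in the asserted subspace, and the content lies in the reverse inclusion together with the injectivity.

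Next I would establish the key input that $\Cn$ restricts to a fully faithful functor on the eso filtrations $\Filts$, equivalently that the counit $|\Cn\mathcal{F}|\to\mathcal{F}$ is an equivalence for every $\mathcal{F}\in\Filts$; this is where the combinatorial results of \cite{Effectivity} and the level-wise formula for $\Cn$ enter. Since every realization $|\P| = (\P_0 \to \Gr\P)$ is eso, the map being the identity on objects, the triangle identities force $|\eta_\P|:|\P|\to|\Cn|\P||$ to be an equivalence. Combining this with the adjunction $|-|\dashv\Cn$ yields, for every double $\infty$-category $\Q$, a commuting square
\[
\begin{tikzcd}
\map_{\DbliCat}(\Cn|\P|,\Q) \arrow[r]\arrow[d, "r_\P"'] & \map_{\DbliCat}(\Cn|\P|,\Cn|\Q|)\arrow[d, "\wr"] \\
\map_{\DbliCat}(\P,\Q) \arrow[r] & \map_{\DbliCat}(\P,\Cn|\Q|)
\end{tikzcd}
\]
whose horizontal maps are postcomposition with $\eta_\Q$ and whose right-hand vertical map is an equivalence, since both its source and target are mapping spaces into the \v{C}ech nerve $\Cn|\Q|$, to which the previous paragraph applies. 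This reduces the problem to deciding which functors $\P\to\Q$, after postcomposition with $\eta_\Q$, become restrictions of functors $\Cn|\P|\to\Cn|\Q|$ that factor through the subcategory $\Q$.

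Finally I would settle the monomorphism and the image by a companionship argument. For injectivity, the fiber of $r_\P$ over a functor $g$ is the space of extensions of $g$ along $\eta_\P$; such an extension is forced to send the essentially unique companion of each vertical arrow $f$ of $\P$, together with its companionship unit and counit, to a companionship datum for $g(f)$ in $\Q$. Since the space of companions of a fixed arrow, equipped with its unit and counit, is empty or contractible by \cite{CompJaco}, this fiber is empty or contractible, so $r_\P$ is a monomorphism. For the image, given $g$ sending every vertical arrow to a companionable one, the square produces a unique $\tilde g:\Cn|\P|\to\Cn|\Q|$ lifting $\eta_\Q g$; one checks that $\Cn|\P|$ is generated by the image of $\P$ together with the companions of the vertical arrows of $\P$, that $\tilde g$ carries the former into $\Q$ by construction, and that it carries the companion of a vertical $f$ to a companion of $g(f)$, which lies in the sub-double $\infty$-category $\Q\subseteq\Cn|\Q|$ precisely when $g(f)$ already admits a companion there. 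Here one again uses uniqueness of companions to match the canonical companion in the \v{C}ech nerve with the one coming from $\Q$. Hence $\tilde g$ factors through $\Q$, and $g$ lies in the image.

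The main obstacle I expect to be the identification of $\Cn|\Q|$ as \emph{$\Q$ with companions freely adjoined to every vertical arrow}: concretely, the full faithfulness of $\Cn$ on $\Filts$ and the precise description of the monomorphism $\eta_\Q$, exhibiting $\Q\subseteq\Cn|\Q|$ as exactly those horizontal arrows not forced into existence as new companions. Everything else is formal once these structural facts about the directed \v{C}ech nerve, supplied by \cite{Effectivity} and the companion calculus of \cite{CompJaco}, are in place.
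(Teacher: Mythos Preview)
Your broad strategy --- reduce to the case where the target is a directed \v{C}ech nerve and invoke full faithfulness of $\Cn$ on $\Filts$ --- is the same as the paper's, and your commutative square with the right vertical equivalence is correct. The divergence, and the gap, is in how you pass from an arbitrary $\Q$ to a \v{C}ech nerve.

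You embed $\Q$ into $\Cn|\Q|$ via the unit $\eta_\Q$ and then try to argue that a lift $\tilde g:\Cn|\P|\to\Cn|\Q|$ factors back through $\Q$. This requires (i) that $\eta_\Q$ is a monomorphism, (ii) that $\Cn|\P|$ is ``generated'' by the image of $\P$ together with the companions of its vertical arrows, and (iii) that these two facts suffice to force the factorization on all cells, including $2$-cells. None of these is established: (i) is not obvious and you do not prove it; (ii) is not made precise (what notion of generation for double $\infty$-categories?); and (iii) is simply asserted. You flag the ``main obstacle'' yourself in the last paragraph, but the description of $\eta_\Q$ you say you need --- exhibiting $\Q\subseteq\Cn|\Q|$ as exactly those horizontal arrows not newly created as companions --- is essentially a reformulation of the universal property you are trying to prove, so the argument as written is circular.

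The paper avoids all of this by going the other direction: instead of enlarging $\Q$ to $\Cn|\Q|$, it passes to the sub double $\infty$-category $\tau_{\mathrm{vcomp}}\Q\subseteq\Q$ spanned by cells all of whose vertical edges admit companions. By construction, restriction along $\eta_\P$ lands in $\map(\P,\tau_{\mathrm{vcomp}}\Q)$ and, conversely, any $g$ in the asserted image already factors through $\tau_{\mathrm{vcomp}}\Q$; so one must only show that
\[
\map_{\DbliCat}(\Cn|\P|,\tau_{\mathrm{vcomp}}\Q)\longrightarrow\map_{\DbliCat}(\P,\tau_{\mathrm{vcomp}}\Q)
\]
is an equivalence. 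The point is that \cite[Theorem 3.4.1]{Effectivity} supplies an eso filtration $f$ with $\tau_{\mathrm{vcomp}}\Q\simeq\Cn(f)$, and then the factorization $\map_{\Filt}(|\P|,f)\to\map(\Cn|\P|,\Cn(f))\to\map(\P,\Cn(f))$ together with the adjunction and full faithfulness of $\Cn$ on $\Filts$ finishes the proof by $2$-out-of-$3$. In short: replace $\Q$ by a sub double $\infty$-category that already \emph{is} a \v{C}ech nerve, rather than by a larger one in which you must re-locate $\Q$.
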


The theorem specializes to the following result that appeared as a conjecture in \cite[Section 2.7]{JacoThesis} before, and generalizes \cite[Theorem 4.13]{CompJaco}:

\begin{corollary}\label{cor:uni prop squares vert}
    Let $\C$ be an $(\infty,2)$-category, and $\Q$ be a double $\infty$-category. Then the canonical map $\C_v \to \Sq(\C)$ induces a monomorphism 
     $$
    \map_{\DbliCat}(\Sq(\C), \Q) \to \map_{\DbliCat}(\C_v, \Q)
    $$
    whose image is given by the functors $\C_v \to \Q$ that carry every arrow in $\C$ to a vertical arrow of $\Q$ that admits a companion.
\end{corollary}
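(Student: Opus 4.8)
The plan is to deduce the corollary as the special case $\P = \C_v$ of \ref{thm:uni prop of cech}. The two things I must check are that $\Cn|\C_v|$ is identified with $\Sq(\C)$ in a way compatible with the relevant unit maps, and that the image condition of \ref{thm:uni prop of cech}, phrased in terms of vertical arrows of $\C_v$, translates into the condition on arrows of $\C$ stated in the corollary.

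For the first point, I would recall from \ref{cons: realization-nerve adjunction} that the adjunction $\Gr \dashv \Sq$ is obtained by composing the realization-nerve adjunction $|{-}| \dashv \Cn$ with the adjunction of \ref{rem: trivial filtration}, whose left adjoint is evaluation at $1$ and whose fully faithful right adjoint $R : (\infty,2)\Cat \to \Filt$ sends an $(\infty,2)$-category to the filtration $\tau_1\C \to \C$. In particular $\Sq = \Cn \circ R$. By \ref{exemple:realization of vertical and horizontal inclusion}, the realization $|\C_v|$ is the filtration $\tau_1\C \to \C$, which is exactly $R(\C)$; hence $\Cn|\C_v| \simeq \Cn R(\C) = \Sq(\C)$. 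Since $R$ is fully faithful and $|\C_v|$ lies in its essential image, the unit of the adjunction of \ref{rem: trivial filtration} at $|\C_v|$ is an equivalence. Applying $\Cn$ and using that the unit of the composite adjunction $\Gr \dashv \Sq$ at $\C_v$ factors as $\C_v \to \Cn|\C_v| \to \Sq\Gr(\C_v)$ through the unit of $|{-}| \dashv \Cn$, I would conclude that the unit $\C_v \to \Cn|\C_v|$ coincides, under the identification $\Cn|\C_v| \simeq \Sq(\C)$, with the canonical map $\C_v \to \Sq(\C)$ described in \ref{exemple:realization of vertical and horizontal inclusion}.

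For the second point, I would appeal to the defining pushout presentation of the vertical inclusion in \ref{not: vertical horizontal inclusion}: the horizontal arrows of $\C_v$ are identities and its vertical arrows are precisely the arrows of $\C$. Consequently, a functor $\C_v \to \Q$ carries every vertical arrow of $\C_v$ to a vertical arrow of $\Q$ admitting a companion if and only if it carries every arrow of $\C$ to such a vertical arrow. Combining this with \ref{thm:uni prop of cech} applied to $\P = \C_v$, together with the identification of the unit with the canonical map, yields exactly the monomorphism and the image described in the corollary.

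The result is a formal consequence of \ref{thm:uni prop of cech}, so I do not expect a substantive obstacle. The only point demanding care is the compatibility of the unit $\C_v \to \Cn|\C_v|$ with the canonical map $\C_v \to \Sq(\C)$; this rests entirely on the observation that $|\C_v|$ already lies in the essential image of the fully faithful functor $R$, so that the intervening unit of the adjunction of \ref{rem: trivial filtration} is invertible and no further comparison is needed.
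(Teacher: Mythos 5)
Your proposal is correct and follows essentially the same route as the paper: the paper's proof likewise identifies $|\C_v|$ with the filtration $\tau_1\C \to \C$ via \ref{exemple:realization of vertical and horizontal inclusion}, deduces $\Cn|\C_v| \simeq \Sq(\C)$, and invokes \ref{thm:uni prop of cech}. Your additional verification that the unit $\C_v \to \Cn|\C_v|$ agrees with the canonical map $\C_v \to \Sq(\C)$ (via the invertibility of the unit of the adjunction of \ref{rem: trivial filtration} at $|\C_v|$) is a point the paper leaves implicit, and it is handled correctly.
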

\begin{proof}
    On account of the example \ref{exemple:realization of vertical and horizontal inclusion}, $|\C_v|$ corresponds to the filtration $\tau_1\C\to \C$, and we then have $\Cn|\C_v|\simeq \Sq(\C)$. The result then follows from  \ref{thm:uni prop of cech}.
\end{proof}

We will need an auxiliary definition.

\begin{construction}
    Suppose that $\P$ is a double $\infty$-category. Then for non-negative integers $n$ and $m$, we may consider subspaces
    $
    (\tau_\mathrm{vcomp}\P)_{n,m}, (\tau_\mathrm{hcomp})_{n,m} \subset \P_{n,m}
    $
    which are defined so that for every $\sigma : \langle n,m \rangle \to \P$, the following holds:
    \begin{enumerate}
        \item $\sigma$ belongs to $ (\tau_\mathrm{vcomp}\P)_{n,m}$ if and only if for every map $f : [1]_v \to \langle n,m \rangle$, the restriction $\sigma|f$ selects a vertical arrow that admits a companion,
        \item $\sigma$ belongs to $(\tau_\mathrm{hcomp}\P)_{n,m}$ if and only if for every map $f : [1]_h \to \langle n,m \rangle$, the restriction $\sigma|f$ selects a horizontal arrow that is a companion.
    \end{enumerate}
    These subspaces assemble to bisimplicial subspaces $\tau_\mathrm{vcomp}\P$ and $\tau_\mathrm{hcomp}\P$ of $\P$. With these definitions, note that for any bisimplicial space $X$, the maps
    $$
    \map_{\PSh(\Delta \times \Delta)}(X, \tau_\mathrm{vcomp}\P),~\map_{\PSh(\Delta \times \Delta)}(X, \tau_\mathrm{hcomp}\P) \to \map_{\PSh(\Delta \times \Delta)}(X, \P)
    $$
    are monomorphisms whose image correspond to the maps $\sigma : X \to \P$ that carry every vertical arrow (resp.\ horizontal arrow) of $X$ to a vertical arrow (resp.\ horizontal arrow) of $\P$ that admits (resp.\ is given by) a companion.
\end{construction}

\begin{lemma}
    Suppose that $\P$ is a double $\infty$-category. Then $\tau_\mathrm{vcomp}\P$ and $\tau_\mathrm{hcomp}\P$ are double $\infty$-categories.
\end{lemma}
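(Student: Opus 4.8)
The plan is to verify directly that $\tau_\mathrm{vcomp}\P$ and $\tau_\mathrm{hcomp}\P$ satisfy the two defining conditions of a double $\infty$-category: that the restriction in the first variable is a Segal space, and that the restriction in the second variable is a \emph{complete} Segal space. I would use two inputs recorded above. First, since companions are unique up to contractible choice, admitting a companion (resp.\ being a companion) is a property, so the inclusions $\tau_\mathrm{vcomp}\P \hookrightarrow \P$ and $\tau_\mathrm{hcomp}\P \hookrightarrow \P$ are levelwise monomorphisms; that is, $(\tau_\mathrm{vcomp}\P)_{n,m}$ and $(\tau_\mathrm{hcomp}\P)_{n,m}$ are unions of path components of $\P_{n,m}$, and by the preceding construction they assemble to sub-bisimplicial spaces. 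Second, I use that the vertical arrows admitting a companion, resp.\ the horizontal arrows that are companions, are closed under composition and contain all identities. Recall finally that, with the present conventions, the vertical direction is the second (complete) simplicial coordinate and the horizontal direction is the first, so that a vertical arrow of $\langle n,m\rangle$ sits at a single horizontal vertex while a horizontal arrow sits at a single height.

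For the Segal conditions there are two mechanisms. Consider $\tau_\mathrm{vcomp}\P$ and its Segal map in the first variable: restricting a cell $\sigma : \langle n,m\rangle \to \P$ along the spine inclusions $\langle 1,m\rangle \hookrightarrow \langle n,m\rangle$ produces pieces $\sigma_1,\dots,\sigma_n$, and since every vertical arrow lies at one horizontal vertex it appears among the vertical arrows of some $\sigma_k$; hence $\sigma \in \tau_\mathrm{vcomp}\P$ if and only if each $\sigma_k$ does. This \emph{distributional} argument is purely formal: it restricts the Segal equivalence for $\P(-,[m])$ to the relevant unions of path components, with base $(\tau_\mathrm{vcomp}\P)_{0,m}$. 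For the Segal map in the second variable the tracked vertical arrows now lie \emph{along} the direction of decomposition, so the spine pieces only witness the atomic vertical arrows; here one invokes closure under composition and identities to promote ``all atomic vertical arrows have companions'' to ``all vertical arrows have companions'', which yields the restricted Segal equivalence (the base being $(\tau_\mathrm{vcomp}\P)_{n,0} = \P_{n,0}$, as $\langle n,0\rangle$ has no vertical arrows). The case of $\tau_\mathrm{hcomp}\P$ is the mirror image: the first-variable Segal map is the one requiring closure under composition, while the second-variable one is distributional.

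It remains to upgrade the second-variable Segal spaces $(\tau_\mathrm{vcomp}\P)([n],-)$ and $(\tau_\mathrm{hcomp}\P)([n],-)$ to \emph{complete} Segal spaces. For this I would isolate the following inheritance statement: if $X$ is a complete Segal space and $Y \subseteq X$ is a sub-simplicial space which is itself a Segal space and levelwise a union of path components of $X$, then $Y$ is complete. Granting this, both claims follow, since $\P([n],-)$ is complete and the relevant restrictions were just shown to be Segal subspaces that are levelwise unions of components. To prove the statement, note that the $J$-evaluation $\map(J,Y) \to \map(J,X)$ is a monomorphism, being a limit of the levelwise monomorphisms $Y_k \to X_k$; composing with the completeness equivalence $\map(J,X) \simeq X_0$ exhibits the space of equivalences of $Y$ as a union of path components of $X_0$. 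Given an equivalence of $Y$ with image $x \in X_0$, its underlying arrow is an equivalence of $X$, hence the degeneracy $s_0 x$; its source lies in $Y_0$, so $x \in Y_0$, and conversely for $x \in Y_0$ the degeneracy $s_0 x$ lies in $Y$. Thus the space of equivalences of $Y$ is exactly $Y_0 \subseteq X_0$, so the completeness map $Y_0 \to \map(J,Y)$ is an equivalence and $Y$ is complete.

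The main obstacle is the interaction of the two simplicial directions: the second-variable Segal condition for $\tau_\mathrm{vcomp}\P$ genuinely relies on the closure of companionable vertical arrows under composition, and dually for the first-variable condition of $\tau_\mathrm{hcomp}\P$, whereas the orthogonal directions are handled by the formal distributional argument. I stress that, although $\tau_\mathrm{vcomp}$ and $\tau_\mathrm{hcomp}$ are formally dual, one cannot deduce the $\tau_\mathrm{hcomp}\P$ case from the $\tau_\mathrm{vcomp}\P$ case via the transpose involution, since that duality is only available on $\CDbliCat$ and $\P$ is not assumed complete; this is precisely why the completeness inheritance lemma is phrased to apply uniformly to both (for $\tau_\mathrm{vcomp}\P$ the subobject is wide, for $\tau_\mathrm{hcomp}\P$ it is full), and why both cases are argued directly.
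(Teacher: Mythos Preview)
Your argument is correct. The Segal verification you give spells out in detail what the paper leaves as ``readily verified''; the distinction you draw between the distributional direction and the direction requiring closure under composition is exactly right, and your remark that one cannot simply transpose to pass between the two cases is well taken.

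For completeness, the paper takes a slightly different route. Rather than isolating your general inheritance lemma for sub-Segal-spaces, it handles both $\tau_\mathrm{vcomp}\P$ and $\tau_\mathrm{hcomp}\P$ simultaneously via a single $2$-out-of-$3$ diagram comparing $\map([0]_v,-)$ and $\map([J]_v,-)$ for all three of $\tau_\mathrm{vcomp}\P$, $\P$, and $\tau_\mathrm{hcomp}\P$. The key observations there are that $[J]_v$ has only identity horizontal arrows, so $\map([J]_v,\tau_\mathrm{hcomp}\P)\to\map([J]_v,\P)$ is an equivalence, and that any map $[J]_v\to\P$ factors through $[0]_v$ by the completeness of $\P$, so its vertical arrows are identities and hence companionable, making $\map([J]_v,\tau_\mathrm{vcomp}\P)\to\map([J]_v,\P)$ an equivalence as well. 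Your approach is more modular and yields a reusable statement about sub-Segal-spaces of complete Segal spaces; the paper's is more economical for this particular lemma and exploits the specific shape of $[J]_v$.
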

\begin{proof}
    One may readily verify that the Segal conditions in both simplicial directions are met. It remains to check that both objects are local with respect to $[J]_v \to [0]_v$. Let us consider the commutative diagram
     \[
    \begin{tikzcd}[column sep = small, row sep = small]
        \map_{\PSh(\Delta^{\times 2})}([0]_v, \tau_\mathrm{vcomp}\P) \arrow[d, "\simeq"'] \arrow[r] & \map_{\PSh(\Delta^{\times 2})}([J]_v, \tau_\mathrm{vcomp}\P) \arrow[d]  \\
        \map_{\PSh(\Delta^{\times 2})}([0]_v, \P)  \arrow[r, "\simeq"] & \map_{\PSh(\Delta^{\times 2})}([J]_v, \P)   \\ 
        \map_{\PSh(\Delta^{\times 2})}([0]_v,\tau_\mathrm{hcomp}\P) \arrow[r]\arrow[u, "\simeq"] &  \map_{\PSh(\Delta^{\times 2})}([J]_v, \tau_\mathrm{hcomp}\P) \arrow[u, "\simeq"],
    \end{tikzcd}
    \]
    where we marked all the equivalences that follow either by construction or assumption. The top right vertical arrow is a monomorphism by construction, and must be essentially surjective as well. Thus all horizontal arrows are equivalences by 2-out-of-3.
\end{proof}

\begin{proof}[Proof of \ref{thm:uni prop of cech}]
    We equivalently have to show that $$\map_{\DbliCat}(\Cn|\P|, \tau_{\mathrm{vcomp}}\Q) \to \map_{\DbliCat}(\P, \tau_\mathrm{vcomp}\Q)$$
    is an equivalence. On account of \cite[Theorem 3.4.1]{Effectivity}, there exists an eso filtration $f$ so that $\tau_\mathrm{comp}\Q \simeq \Cn(f)$. The total composite in the factorization 
     $$\map_{\Filt}(|\P|, f) \to \map_{\DbliCat}(\Cn|\P|, \Cn(f)) \to \map_{\DbliCat}(\P, \Cn(f))$$
     is an equivalence by adjunction. The first map is an equivalence on account of \cite[Theorem 3.4.1]{Effectivity}, thus the desired result follows from 2-out-of-3.
\end{proof}

There is a version of \ref{cor:uni prop squares vert} where the vertical inclusion is replaced by the horizontal inclusion into the squares construction. In this case, we need an additional completeness assumption (cf.\ \cite[Corollary 4.15]{CompJaco}).

\begin{theorem}\label{thm:uni prop squares hor}
    Let $\C$ be an $(\infty,2)$-category, and $\Q$ be a locally complete double $\infty$-category. Then the canonical map $\C_h \to \Sq(\C)$ induces a monomorphism 
     $$
    \map_{\DbliCat}(\Sq(\C), \Q) \to \map_{\DbliCat}(\C_h, \Q)
    $$
    whose image is given by the functors $\C_h \to \Q$ that carry every arrow in $\C$ to a horizontal arrow of $\Q$ that is a companion.
\end{theorem}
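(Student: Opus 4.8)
The plan is to deduce \ref{thm:uni prop squares hor} from the vertical statement \ref{cor:uni prop squares vert} by transporting it along the transpose duality. First I would record the compatibility of $\Sq$ and of the inclusions with the transpose. Using the duality $(\C\otimes\D)^\co\simeq\D^\co\otimes\C^\co$ together with the formula $\map_{\DbliCat}(\langle n,m\rangle,\Sq(\C))=\map_{(\infty,2)\Cat}([n]\otimes[m],\C)$, one checks on representables that $\Sq(\C)^\tp\simeq\Sq(\C^\co)$, and from the defining pushout squares of \ref{not: vertical horizontal inclusion} that $(\C_h)^\tp\simeq(\C^\co)_v$ (the transpose swaps horizontal and vertical arrows without reversing their directions, and reflecting a square across the diagonal reverses the $2$-cells, which is exactly the $\co$-duality). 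These identifications are compatible with the inclusion maps, so $\tp^*$ carries $\C_h\to\Sq(\C)$ to the vertical inclusion $(\C^\co)_v\to\Sq(\C^\co)$. Since $\tp^*:\PSh(\Delta\times\Delta)\to\PSh(\Delta\times\Delta)$ is an equivalence, it identifies the restriction map in \ref{thm:uni prop squares hor} with
\[
\map_{\PSh(\Delta\times\Delta)}(\Sq(\C^\co),\Q^\tp)\to\map_{\PSh(\Delta\times\Delta)}((\C^\co)_v,\Q^\tp).
\]
Moreover, the transpose interchanges ``vertical arrow admitting a companion'' with ``horizontal arrow that is a companion'', so the image predicted by \ref{thm:uni prop squares hor} corresponds exactly to the image predicted by \ref{cor:uni prop squares vert} applied to $\C^\co$ with target $\Q^\tp$.

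The difficulty is that $\Q^\tp$ is \emph{not} a double $\infty$-category. The transpose exchanges the vertical direction of $\Q$, which is complete, with its horizontal direction, which is only Segal; thus $\Q^\tp$ is a bisimplicial Segal space that is complete in the horizontal direction but, a priori, only \emph{locally} complete in the vertical direction — this is precisely the content of the local completeness hypothesis on $\Q$. Consequently \ref{cor:uni prop squares vert} does not apply to $\Q^\tp$ on the nose, and the real work is to extend the \v{C}ech-nerve argument of \ref{thm:uni prop of cech} to such targets. I would revisit that proof: its only use of full vertical completeness of the target occurs in the lemma asserting that $\tau_\mathrm{vcomp}$ of the target is again a double $\infty$-category, through locality with respect to $[J]_v\to[0]_v$. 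Writing $X:=\Q^\tp$, the Segal conditions for $\tau_\mathrm{vcomp}X$ in both directions hold as before, so everything reduces to verifying that $\tau_\mathrm{vcomp}X$ is vertically complete.

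This verification is the step I expect to be the main obstacle, and it is where local completeness of $\Q$ is consumed. A vertical arrow of $X$ that admits a companion is, under the transpose, a horizontal arrow of $\Q$ that is a companion; such an arrow being invertible corresponds to a horizontal equivalence of $\Q$ that is a companion, and the local completeness of $\Q$ forces these to be degenerate. This yields exactly the completeness of $\tau_\mathrm{vcomp}X$ in the vertical direction. Granting this, $\tau_\mathrm{vcomp}X$ is a double $\infty$-category, and by \cite[Theorem 3.4.1]{Effectivity} it is the directed \v{C}ech nerve of an eso filtration, so the argument of \ref{thm:uni prop of cech} goes through verbatim to show that
\[
\map_{\DbliCat}(\Sq(\C^\co),\tau_\mathrm{vcomp}X)\to\map_{\DbliCat}((\C^\co)_v,\tau_\mathrm{vcomp}X)
\]
is an equivalence, using $\Sq(\C^\co)\simeq\Cn|(\C^\co)_v|$. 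Transporting back along $\tp^*$ and unwinding the companion correspondence then produces precisely the monomorphism and image description of \ref{thm:uni prop squares hor}. The subtle point throughout is the completeness bookkeeping of this last paragraph, namely confirming that local — rather than full — completeness of $\Q$ is exactly what is needed to recognize the transposed object as a \v{C}ech nerve and thereby run the vertical universal property.
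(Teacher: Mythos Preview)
Your strategy is essentially the same as the paper's: reduce to the vertical statement \ref{cor:uni prop squares vert} via the transpose duality, with the crux being a completeness argument that consumes the local completeness hypothesis on $\Q$. The paper organizes this slightly differently. Rather than transposing $\Q$ immediately and then repairing the resulting failure of vertical completeness inside the \v{C}ech-nerve argument, the paper first passes to the sub double $\infty$-category $\Q' := \tau_\mathrm{hcomp}\Q \times_\Q \tau_\mathrm{vcomp}\Q$ and proves a standalone lemma (\ref{lem:comp core is complete}) that $\Q'$ is a \emph{complete} double $\infty$-category whenever $\Q$ is locally complete. The reduction to $\Q'$ is harmless because $\Sq(\C)$ has all companions and every horizontal arrow is a companion, so maps out of $\Sq(\C)$ and $\C_h$ already land in $\Q'$. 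Once $\Q'$ is complete, the transpose $(\Q')^{\tp,\hop,\vop}$ is again a genuine double $\infty$-category, and \ref{cor:uni prop squares vert} applies on the nose without re-opening the proof of \ref{thm:uni prop of cech}. Your version amounts to proving the same lemma on the transposed side (showing $\tau_\mathrm{vcomp}(\Q^\tp)\simeq(\tau_\mathrm{hcomp}\Q)^\tp$ is a double $\infty$-category), so the mathematical content is identical; the paper's packaging just keeps the completeness argument separate and reuses the vertical corollary verbatim.

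Two small points. First, your justification ``local completeness of $\Q$ forces these to be degenerate'' is exactly the nontrivial step, and the paper's proof of \ref{lem:comp core is complete} makes it precise via \cite[Theorem 4.13]{CompJaco}: for locally complete targets the restrictions $\map(\Sq([1]),-)\to\map([1]_v,-)$ and $\map(\Sq([1]),-)\to\map([1]_h,-)$ are equivalences onto the companion loci, which lets one transport invertibility between the two directions. Second, the precise duality bookkeeping differs: you use $(-)^\co$ alone, while the paper identifies $\C_h\to\Sq(\C)$ with $(\C^\op)_v^{\tp,\hop,\vop}\to\Sq(\C^\op)^{\tp,\hop,\vop}$. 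Both are correct descriptions of the same inclusion up to the evident identities relating $\tp$, $\hop$, $\vop$ with $\op$ and $\co$; just be careful to track which one you are using when matching the image descriptions.
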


To prove this, we need the following input: 
\begin{lemma}\label{lem:comp core is complete}
    Suppose that $\P$ is a locally complete double $\infty$-category. Then the sub double $\infty$-category $\Q := \tau_\mathrm{vcomp}\P \times_\P \tau_\mathrm{hcomp}\P$ of $\P$ is a complete double $\infty$-category.
\end{lemma}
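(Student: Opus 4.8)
The plan is to verify directly that the Segal space $\Q(-,[1])$ is complete, which is by definition the assertion that $\Q$ is a complete double $\infty$-category. (That $\Q$ is a double $\infty$-category at all follows exactly as in the previous lemma, since the Segal and completeness conditions cut out by being a double $\infty$-category are stable under the levelwise intersection defining the pullback $\Q$ inside $\P$.) Write $W := \Q(-,[1])$ and $V := \Q(-,[0])$, and let $p : W \to V \times V$ be the map of Segal spaces given by the two vertical endpoints, so that $p$ sends a square of $\Q$ to its top and bottom horizontal arrows. I will deduce completeness of $W$ from two facts: (a) the Segal space $V$ is complete; and (b) $\Q$ is locally complete, i.e.\ every fibre $W_{(x,y)} := W \times_{V \times V} \{(x,y)\}$ over a pair of objects is complete. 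Granting these, let $\sigma$ be an equivalence in $W$; since $p$ preserves equivalences, its image $(F,G)$ is an equivalence in $V \times V$, so by (a) the horizontal arrows $F$ and $G$ are degenerate. Thus $\sigma$ lies in the fibre $W_{(x,y)}$ over its corner objects and, its inverse having degenerate $p$-image as well, restricts to an equivalence there; by (b) this equivalence is degenerate. Since degenerate arrows are always equivalences and $s_0$ is a split monomorphism, this identifies $W_0 \xrightarrow{\ \sim\ } W_{\mathrm{heq}}$ and shows that $W$ is complete.

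For (a), recall that the horizontal arrows of $\Q$ are precisely the companion horizontal arrows of $\P$, while $V$ carries all objects of $\P$. Let $F$ be an equivalence in $V$; then $F$ and its inverse $F^{-1}$ are both morphisms of $V$, hence companions, say of vertical arrows $f$ and $f'$. Using that a composite of companions is the companion of the composite of the underlying vertical arrows (\cite{CompJaco}), the horizontal identities $F^{-1} \circ F$ and $F \circ F^{-1}$ are the companions of $f' \circ f$ and $f \circ f'$; since companions are unique up to contractible choice and the identity is the companion of the identity (\cite{CompJaco}), it follows that $f' \circ f \simeq \mathrm{id}$ and $f \circ f' \simeq \mathrm{id}$, so $f$ is a vertical equivalence. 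But $\P$ is a double $\infty$-category, so its Segal space $\P(0,-)$ of objects and vertical arrows is complete; hence $f$ is degenerate, and its companion $F$ is then the companion of an identity, so itself a horizontal identity. Every equivalence in $V$ is therefore degenerate, which is exactly completeness of $V$.

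For (b), fix objects $x, y$. The fibre $W_{(x,y)}$ has as objects the vertical arrows $x \to y$ admitting a companion, and as morphisms the squares of $\Q$ with identity horizontal boundary; it includes via a levelwise monomorphism into the fibre $\P(-,[1]) \times_{\P(-,[0]) \times \P(-,[0])} \{(x,y)\}$, which is complete by the local-completeness hypothesis on $\P$. Any equivalence $\sigma$ of $W_{(x,y)}$ is then an equivalence of this larger, complete fibre, hence degenerate there, so $\sigma = s_0(v)$ for a vertical arrow $v : x \to y$. Because $\sigma$ lies in $\Q$, its vertical arrows admit companions, so $v$ is an object of $W_{(x,y)}$ and $\sigma = s_0(v)$ is a degenerate arrow of $W_{(x,y)}$. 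The equivalences of $W_{(x,y)}$ are thus precisely its degenerate arrows, and the same split-monomorphism argument yields completeness of $W_{(x,y)}$.

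The main obstacle will be step (a): the whole argument rests on the precise interaction between companionship and invertibility — that a companion horizontal arrow is invertible exactly when the vertical arrow it witnesses is — and this is where the structural properties of companions from \cite{CompJaco}, namely their uniqueness up to contractible choice and their compatibility with composition, are indispensable. Once (a) is in place, the built-in vertical completeness of the double $\infty$-category $\P$ disposes of the vertical equivalence obtained, while the local-completeness hypothesis is used only, and exactly, to supply the complete fibres in step (b).
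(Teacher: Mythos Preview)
Your reduction of completeness of $W := \Q(-,[1])$ to (a) completeness of $V := \Q(-,[0])$ together with (b) completeness of the fibres $W_{(x,y)}$ is exactly the decomposition the paper uses as well: the paper first notes that $\Q$ inherits local completeness from $\P$ (your step (b)) and then says it \emph{remains} to check locality with respect to $J_h \to [0]_h$, which is precisely completeness of $V$ (your step (a)). So the architecture is the same; the difference lies in the proof of (a).

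There is a genuine gap in your step (a). You argue: $F^{-1}F \simeq \id$ is the companion of $f'f$, and $\id$ is the companion of $\id$, hence by ``uniqueness of companions'' $f'f \simeq \id$. But the uniqueness statement in \cite{CompJaco} goes the other way: given a vertical arrow, its companion is unique. What you need is the \emph{reverse} direction --- that a horizontal arrow determines the vertical arrow of which it is a companion --- and this is exactly the statement that the restriction $\map_{\DbliCat}(\Sq([1]),\P) \to \map_{\DbliCat}([1]_h,\P)$ is a monomorphism. That assertion is \emph{not} automatic; it is the content of \cite[Corollary 4.15]{CompJaco} and requires local completeness (the paper flags this explicitly just before the lemma). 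Your closing claim that local completeness is used ``only, and exactly'' in step (b) is therefore incorrect: it is also indispensable in step (a), and without it one cannot conclude $f'f \simeq \id$ from $F^{-1}F \simeq \id$.

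The paper handles step (a) by organising the same idea through the universal property of $\Sq([1])$: it sets up a diagram comparing $\map_{\DbliCat}([1]_v,\Q)$ and $\map_{\DbliCat}([1]_h,\Q)$ via $\map_{\DbliCat}(\Sq([1]),\Q)$, invokes \cite[Theorem 4.13]{CompJaco} (with the local completeness of $\Q$ just established) to make both comparison maps equivalences, and observes that in a companionship the vertical arrow is an equivalence if and only if the horizontal one is. This transfers the known completeness in the vertical direction to the horizontal direction. Your argument can be repaired along the same lines: invoke local completeness of $\P$ (or of $\Q$) to justify the reverse uniqueness, after which the rest of your step (a) goes through.
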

\begin{proof}
    For brevity, let us write $\Q := \tau_\mathrm{vcomp}\P \times_\P \tau_\mathrm{hcomp}\P$.
    One readily verifies that $\Q$ is again locally complete. It remains to check that it is local with respect to $[J]_h \to [0]_h$. To this end, we may consider the commutative diagram 
    \[
    \begin{tikzcd}[column sep = small, row sep = small]
        \map_{\DbliCat}([0,0], \Q) \arrow[d,equal] \arrow[r] & \map_{\DbliCat}([1]_v, \Q)' \arrow[r, hook] & \map_{\DbliCat}([1]_v, \Q)  \\
        \map_{\DbliCat}([0,0], \Q) \arrow[d,equal] \arrow[r] & \map_{\DbliCat}(\Sq([1]), \Q)' \arrow[r, hook] \arrow[u] \arrow[d] & \map_{\DbliCat}(\Sq([1]), \Q)\arrow[u] \arrow[d]  \\ 
        \map_{\DbliCat}([0,0], \Q) \arrow[r] &  \map_{\DbliCat}([1]_h, \Q)' \arrow[r, hook] & \map_{\DbliCat}([1]_h, \Q).
    \end{tikzcd}
    \] 
    Here, $\map_{\DbliCat}([1]_i, \Q)' \simeq \map_{\DbliCat}(J_i, \Q)$ denotes the subspace of equivalences for $i \in \{v,h\}$. We wrote $\map_{\DbliCat}(\Sq([1]), \Q)'$ for the subspace of maps $f : \Sq([1]) \to \Q$ so that the restrictions $f|[1]_v$ and $f|[1]_h$ select equivalences. But one readily verifies that $f|[1]_v$ is an equivalence if and only if $f|[1]_h$ is an equivalence. So the top and bottom right squares in the above diagram are pullback squares. The legs of the right span in the diagram are equivalences on account of \cite[Theorem 4.13]{CompJaco}. Here we used that $\Q$ is locally complete. From the above, it now follows that the legs of the middle span are equivalences as well. We conclude that all the left horizontal maps in the diagram are equivalences by 2-out-of-3.
\end{proof}

\begin{proof}[Proof of \ref{thm:uni prop squares hor}]
    Since $\Sq(\C)$ has all companions, we equivalently have to show that 
    $$
    \map_{\DbliCat}(\Sq(\C), \Q') \to \map_{\DbliCat}(\C_h, \Q')
    $$
    is a monomorphism with the prescribed image where $\Q' := \tau_\mathrm{hcomp}\Q \times_\Q \tau_\mathrm{vcomp}\Q$. The horizontal inclusion $\C_h \to \Sq(\C)$ is equivalent to the inclusion 
    $$
       (\C^\op)_v^{\tp, \hop,\vop} \to \Sq(\C^\op)^{\tp, \hop, \vop},
    $$
    as can be shown from \cite[Construction 3.57]{CompJaco}.
    On account of \ref{lem:comp core is complete}, $\Q'$ is a complete double $\infty$-ca\-te\-gory. Hence, we can apply dualities to identify the above map with 
    $$
    \map_{\DbliCat}(\Sq(\C^\op), (\Q')^{\tp, \hop, \vop}) \to \map_{\DbliCat}((\C^\op)_v, (\Q')^{\tp, \hop, \vop}).
    $$
    Thus the desired conclusion follows from \ref{cor:uni prop squares vert}.
\end{proof}

\section{The Gray tensor product via squares}\label{section:comparison}

As announced in the introduction, we will now settle the following description of the Gray tensor product that was conjectured by Gaitsgory--Rozenblyum in \cite[Subsection 10.4.5]{GR}:

\begin{theorem}\label{theo:comparaison of gray tensor}
    Let $\C$ and $\D$ be $(\infty,2)$-categories. Then there exists a unique natural equivalence 
    $$
    \Gr(\C_h \times \D_v) \simeq \C \otimes \D.
    $$
\end{theorem}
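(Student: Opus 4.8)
The plan is to treat uniqueness formally and to reduce existence to a computation on globular sums anchored by the defining property of $\Sq$. For uniqueness, observe that once one natural equivalence $\Gr((-)_h\times(-)_v)\xrightarrow{\sim}(-)\otimes(-)$ is produced, any two such equivalences differ by a natural self-equivalence of $\otimes$; by \ref{prop:auto of gray} the endomorphism space of $\otimes$ is contractible, so the space of natural equivalences in question is either empty or contractible. It therefore suffices to exhibit one.

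The next step is to check that both bifunctors $F := \Gr((-)_h\times(-)_v)$ and $G := (-)\otimes(-)$ preserve colimits in each variable separately. For $G$ this is \ref{theo:Gray tensor product preserves colimits}. For $F$, fix $\D$: the functor $(-)_h$ is colimit preserving by \ref{not: vertical horizontal inclusion}, the functor $(-)\times\D_v$ preserves colimits because $\CDbliCat$ is cartesian closed, the object $\C_h\times\D_v$ lies in $\CDbliCat$ since this subcategory is closed under products, and $\Gr$ preserves colimits on $\CDbliCat$ by \ref{remark:Gr and completion}; the variable $\C$ is handled symmetrically. Given this, I would invoke the same fully faithfulness used in the proof of \ref{prop:auto of gray}: by density of $\Theta_2$, restriction along $\Theta_2\times\Theta_2\to(\infty,2)\Cat\times(\infty,2)\Cat$ is fully faithful on bifunctors that are cocontinuous in each variable. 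Producing the desired natural equivalence is thus reduced to producing a natural family of equivalences $\Gr(a_h\times b_v)\simeq a\otimes b$ for globular sums $a,b\in\Theta_2$.

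For simplices the identification is essentially definitional. Under the external product description of bisimplicial spaces one has $[n]_h\times[m]_v\simeq\langle n,m\rangle$, and the defining property of the realization–nerve adjunction, namely $\map_{\DbliCat}(\langle n,m\rangle,\Sq(\E))\simeq\map_{(\infty,2)\Cat}([n]\otimes[m],\E)$, yields $\Gr(\langle n,m\rangle)\simeq[n]\otimes[m]$, naturally in $[n],[m]\in\Delta$. To promote this to an arbitrary globular sum $[n;m]$ I would exploit the two compatible pushout presentations at hand. On the Gray side, \ref{lemma:globular sum as quotient of Gray} exhibits $[n;m]$ as the pushout collapsing the strings over the vertices of $[n]\otimes[m]$; on the double side, \ref{not: vertical horizontal inclusion} exhibits $[n;m]_h$ and $[n;m]_v$ as the analogous pushouts of representables $\langle n,m\rangle$. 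Feeding the latter through the cocontinuous functor $F$ and substituting $\Gr(\langle n,m\rangle)\simeq[n]\otimes[m]$, I would identify $F([n;m],b)$ with the very pushout computing $G([n;m],b)=[n;m]\otimes b$, argue symmetrically in the second variable, and thereby extend the simplex-level equivalence to all of $\Theta_2\times\Theta_2$ together with its naturality.

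The main obstacle is precisely this last matching. One must verify that applying $\Gr$ to the double-categorical pushout presentation of $a_h\times b_v$ reproduces, including all connecting and collapse maps, the Gray-theoretic pushout presentation of $a\otimes b$. This forces one to control the mixed products $\langle n,m\rangle\times b_v$ and their images under $\Gr$, which amounts to understanding products of simplices in each of the two directions; this is exactly the point at which the nontrivial combinatorics of the Gray tensor product enters. The base instance $\Gr([1]_h\times[1]_v)\simeq[1]\otimes[1]$ is already governed by the colimit decomposition of \ref{lemma:square as a colimit}, and I would organize the general argument around this calculation, using the cocontinuity established above to bootstrap from it.
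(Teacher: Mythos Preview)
Your overall architecture—uniqueness via \ref{prop:auto of gray}, cocontinuity in each variable, reduction to a dense generating subcategory, and the defining identification $\Gr(\langle n,m\rangle)\simeq[n]\otimes[m]$—is the same as the paper's. The paper reduces to $[\Delta;\Delta]$ rather than $\Theta_2$, but that is immaterial.

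The gap is that your proposal stops exactly where the substance lies, and your sketch of how to close it does not work as stated. After crossing the pushout presentation of $[n;m]_h$ from \ref{not: vertical horizontal inclusion} with $[p]_v$ and applying $\Gr$, the span you obtain is
\[
\textstyle\coprod_{j\le n}[p] \;\longleftarrow\; \coprod_{j\le n}[m]\times[p] \;\longrightarrow\; [n]\otimes([m]\times[p]),
\]
whereas tensoring the pushout of \ref{lemma:globular sum as quotient of Gray} with $[p]$ yields
\[
\textstyle\coprod_{j\le n}[p] \;\longleftarrow\; \coprod_{j\le n}[m]\otimes[p] \;\longrightarrow\; [n]\otimes[m]\otimes[p].
\]
These are different spans; identifying their pushouts is the content of the theorem in this case, and neither cocontinuity nor \ref{lemma:square as a colimit} gives it to you, since the cartesian product $[m]\times[p]$ is not a colimit of Gray tensors in any useful way. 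The paper handles this by constructing an explicit collapse map $[n]\otimes([m]\times[p])\to[n;m]\otimes[p]$ in \ref{cons:funny equation} (using gauntness of the target and the cell-by-cell criterion of \ref{rem:when factor throug tau1}) and then verifying the requisite pushout in \ref{lemma:funny equation} via an epimorphism argument (\ref{lemma:important epis}, \ref{lemma:easy diagram chasing}).

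Your phrase ``argue symmetrically in the second variable'' also hides a genuine second step. Once one knows $\Gr([n;m]_h\times[p]_v)\simeq[n;m]\otimes[p]$, passing to $\Gr([n;m]_h\times[k;l]_v)$ is not formal: the pushout presentation of $[k;l]_v$ introduces a factor $\langle l,k\rangle^{\hop}$, and one must understand $\Gr(\langle n,m\rangle\times[k;l]_v)$, which is no longer covered by the base case. The paper treats this with a separate batch of lemmas (\ref{lemma:crush product}, \ref{lemma:comparaions step 3}, \ref{lemma:comparaions step 2}) that again rely on explicit Gray-tensor pushout computations.
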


We note that the unicity will be automatic on account of \ref{prop:auto of gray}.
To prove the above result, we will employ several density arguments to reduce to easier shapes. As an easy case, we have the following intermediate result:

\begin{lemma}
\label{lemma:comparaions step 0.5}
The desired natural equivalence of \ref{theo:comparaison of gray tensor} exists if $\C$ and $\D$ are $\infty$-categories.
\end{lemma}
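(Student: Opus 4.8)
The plan is to corepresent both sides and reduce to simplices. By the adjunction $\Gr \dashv \Sq$ of \ref{cons: realization-nerve adjunction} together with the Yoneda lemma, exhibiting a natural equivalence $\Gr(\C_h \times \D_v) \simeq \C \otimes \D$ is the same as exhibiting a natural equivalence
$$
\map_{\DbliCat}(\C_h \times \D_v, \Sq(\E)) \simeq \map_{(\infty,2)\Cat}(\C \otimes \D, \E)
$$
of functors in the $\infty$-categories $\C, \D$ and the $(\infty,2)$-category $\E$. Both sides are cocontinuous in each of the variables $\C$ and $\D$: the assignment $(\C,\D) \mapsto \C_h \times \D_v$ preserves colimits in each variable since $(-)_h$ and $(-)_v$ do so by \ref{not: vertical horizontal inclusion} and $\CDbliCat$ is cartesian closed, while $\otimes$ preserves colimits by \ref{theo:Gray tensor product preserves colimits}. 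Consequently both displayed mapping spaces carry colimits in $\C$ and $\D$ to limits, and since $\Delta$ is dense in $\infty\Cat$, it suffices to produce the equivalence naturally in the case $\C = [n]$ and $\D = [m]$.

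The key point is the elementary identification $[n]_h \times [m]_v \simeq \langle n,m\rangle$ in $\DbliCat$, natural in $[n],[m] \in \Delta$. Indeed, the horizontal inclusion of the $\infty$-category $[n]$ has trivial vertical direction and is the representable $\langle n, 0\rangle$, and dually $[m]_v \simeq \langle 0, m\rangle$; this is seen at once levelwise, or by noting that the defining pushout squares of \ref{not: vertical horizontal inclusion} degenerate to identities when the second (resp.\ first) index vanishes. Since the $[0]$-factors are terminal, the cartesian product of bisimplicial spaces gives $\langle n,0\rangle \times \langle 0,m\rangle \simeq \langle n,m\rangle$. Feeding this into the defining property of $\Sq$ from \ref{cons: realization-nerve adjunction} then yields
$$
\map_{\DbliCat}([n]_h \times [m]_v, \Sq(\E)) \simeq \map_{\DbliCat}(\langle n,m\rangle, \Sq(\E)) \simeq \map_{(\infty,2)\Cat}([n] \otimes [m], \E),
$$
and all of these equivalences are natural in $[n], [m]$ and $\E$. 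This is precisely the sought equivalence on representables.

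It remains to extend from simplices to arbitrary $\infty$-categories. I would write each $\C$ as the canonical colimit $\colim_{([n]\to\C) \in \Delta_{/\C}} [n]$ and likewise for $\D$, and invoke the cocontinuity recorded above to reduce the general statement to the representable case already treated; the naturality of the resulting equivalence is inherited from the functoriality of these colimit presentations. I do not expect a serious obstacle here, this being the easy case signalled by the name of the lemma. The only point demanding care is precisely this density step, where one must keep track of the fact that $(-)_h$ and $(-)_v$ land in $\CDbliCat$ and that $\Gr$ is cocontinuous only after restriction to $\CDbliCat$ (see \ref{remark:Gr and completion}); since $\Sq(\E)$ is complete, mapping into it does not distinguish colimits formed in $\CDbliCat$ from those in $\DbliCat$, so no completion correction intervenes. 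Finally, the uniqueness clause of \ref{theo:comparaison of gray tensor} is automatic from \ref{prop:auto of gray}.
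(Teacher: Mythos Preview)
Your proof is correct and follows the same route as the paper: reduce by density and cocontinuity to $\C=[n]$, $\D=[m]$, then invoke the defining formula for $\Sq$ from \ref{cons: realization-nerve adjunction} via the identification $[n]_h\times[m]_v\simeq\langle n,m\rangle$, which the paper leaves implicit. One minor correction: $\Gr$ is already a left adjoint on all of $\DbliCat$ by \ref{cons: realization-nerve adjunction} and hence cocontinuous there, so your caveat about needing to pass to $\CDbliCat$ is unnecessary (though harmless, since you argue via mapping into the complete object $\Sq(\E)$ anyway).
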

\begin{proof}
Since $\Delta$ is dense in $\infty\Cat$, and the bifunctors $\otimes$ and $\Gr((-)_h \times (-)_v)$ preserve colimits in each variable, we may reduce to the case that $\C = [n]$ and $\D = [m]$ for some non-negative integers $n,m$. In this case, it directly follows from \ref{cons: realization-nerve adjunction}.
\end{proof}

Before moving to the general case, the next step will be to prove \ref{theo:comparaison of gray tensor} in case that either $\C$ or $\D$ is an $\infty$-category. We recall the following facts:

\begin{lemma}
\label{lemma:easy diagram chasing}
Suppose that we are given a commutative diagram
\[\begin{tikzcd}
	a & c & x \\
	b & d & y
	\arrow["u", from=1-1, to=1-2]
	\arrow[from=1-1, to=2-1]
	\arrow[from=1-2, to=1-3]
	\arrow[from=1-2, to=2-2]
	\arrow[from=1-3, to=2-3]
	\arrow["v"', from=2-1, to=2-2]
	\arrow[from=2-2, to=2-3]
\end{tikzcd}\]
in an $\infty$-category with pushouts.
If the outer square is a pushout square, and if both $u$ and $v$ are epimorphisms, then the right square is a pushout square as well.
\end{lemma}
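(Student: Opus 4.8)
The plan is to test the pushout property against corepresentable functors: fix an object $z$ of the ambient $\infty$-category $\E$ and apply $\map(-,z)$ to the whole diagram. Recall that a square is a pushout if and only if the induced square of mapping spaces is a pullback for every $z$, and that $u$ and $v$ are epimorphisms precisely when $u^* : \map(c,z)\to\map(a,z)$ and $v^* : \map(d,z)\to\map(b,z)$ are monomorphisms of spaces for every $z$. So, writing $X = \map(x,z)$, $C = \map(c,z)$, $D = \map(d,z)$, $A = \map(a,z)$, $B = \map(b,z)$ and $Y = \map(y,z)$, the contravariant image of the diagram is
\[
\begin{tikzcd}[column sep=small, row sep=small]
Y \arrow[r]\arrow[d] & X \arrow[d,"p"] \\
D \arrow[r,"\alpha"]\arrow[d,"v^*"'] & C \arrow[d,"u^*"] \\
B \arrow[r,"\beta"] & A,
\end{tikzcd}
\]
in which $u^*$ and $v^*$ are monomorphisms, the outer rectangle is a pullback (the outer square being a pushout), and I must prove that the top square is a pullback. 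The commutativity of the left square of the original diagram gives $u^*\alpha\simeq\beta v^*$, and the bottom edge $b\to d\to y$ shows that the structure map $Y\to B$ of the outer pullback factors as $Y\to D\xrightarrow{v^*}B$; this is exactly where the hypothesis that $y$ receives the full commutative diagram, and not just the outer cocone, enters.

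The key point is that one cannot simply invoke the pasting law for pullbacks, since the bottom square need not be cartesian (the left square of the original diagram is not assumed to be a pushout). Instead I would argue directly, exploiting the two monomorphisms. There is a canonical comparison map
\[
\Phi : X\times_C D \longrightarrow X\times_A B
\]
induced by $\id_X$, $v^*$ and $u^*$; it is well defined because $u^*\alpha\simeq\beta v^*$. First I would check that the square with $\Phi$ on top, $v^*$ on the bottom, and the two projections to $D$ and $B$ as vertical maps is cartesian, i.e. that the natural map $X\times_C D \to (X\times_A B)\times_B D$ is an equivalence: unwinding, $(X\times_A B)\times_B D$ is the space of pairs $(x',d')$ with $u^*p(x')\simeq \beta v^*(d')\simeq u^*\alpha(d')$, and since $u^*$ is a monomorphism this is equivalent to the condition $p(x')\simeq\alpha(d')$ defining $X\times_C D$. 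Hence $\Phi$ is a base change of the monomorphism $v^*$, and is therefore itself a monomorphism.

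Finally, the commutativity of the right square of the original diagram provides a canonical map $s : Y\to X\times_C D$ built from $Y\to X$ and $Y\to D$ (which agree after projecting to $C$, since $p\circ(Y\to X)\simeq\alpha\circ(Y\to D)$). By construction $\Phi\circ s$ is the equivalence $e : Y\xrightarrow{\sim} X\times_A B$ coming from the outer rectangle, because $v^*\circ(Y\to D)$ is the structure map $Y\to B$. Thus $\Phi$ is a split epimorphism (with right inverse $s\circ e^{-1}$). A monomorphism of spaces that is a split epimorphism is an equivalence, so $\Phi$, and hence its section $s$, is an equivalence. This exhibits $Y\simeq X\times_C D$ naturally in $z$, which is precisely the assertion that the right square is a pushout. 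The main obstacle is conceptual rather than computational: recognising that the correct way to combine the two epimorphism hypotheses is to manufacture the monomorphism $\Phi$ and the section $s$, since the naive pasting of pullbacks is unavailable; once this is set up, the verification of cartesianness and of the identity $\Phi\circ s\simeq e$ are routine chases that feed on the commutativity of both squares of the original diagram.
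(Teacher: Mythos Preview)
Your argument is correct. The paper's own proof is the single sentence ``This is an easy diagram chase using the pasting law for pushout squares,'' so what you have written is a careful unpacking of that sentence rather than a different route. Your observation that the naive pasting law does not apply directly (the left square is not assumed to be a pushout) is exactly right, and your device of producing the monomorphism $\Phi$ together with the section $s$ is a clean way to finish.

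If you want to streamline the write-up, the same ingredients can be arranged as a short chain of equivalences instead of the mono/split-epi argument. Using that $u^*$ is a monomorphism one has $X\times_C D \simeq X\times_A D$ (since both legs to $A$ factor through the subobject $C$); factoring $D\to A$ through $B$ gives $X\times_A D \simeq (X\times_A B)\times_B D \simeq Y\times_B D$; and finally $Y\times_B D\simeq Y$ because $Y\to B$ factors through the monomorphism $v^*$. This is literally your computation, just skipping the detour through $\Phi$: the first step is your pullback identification using $u^*$, and the last step is your base-change of $v^*$ together with the factorisation of $Y\to B$.
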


\begin{proof}
This is an easy diagram chase using the pasting law for pushout squares.
\end{proof}

\begin{lemma}
\label{lemma:important epis}
Let $a$ and $b$ be globular sums. Then the  canonical maps $a \otimes b \to a \times b$ and $a \times b \to b$ are both epimorphisms in $(\infty,2)\Cat$.
\end{lemma}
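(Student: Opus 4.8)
The plan is to use the standard criterion that a map $f \colon X \to Y$ in $(\infty,2)\Cat$ is an epimorphism if and only if $f^* \colon \map(Y,Z) \to \map(X,Z)$ is a monomorphism of spaces for every $(\infty,2)$-category $Z$. For both maps I would rewrite these mapping spaces using internal homs. Recall that $(\infty,2)\Cat$ is cartesian closed, so that $-\times b$ admits a right adjoint $\fun(b,-)$, and that $-\otimes b$ admits a right adjoint $\underline{\fun}(b,-)$ since the Gray tensor product is cocontinuous in each variable by \ref{theo:Gray tensor product preserves colimits}. Under these adjunctions there are natural identifications $\map(a\times b, Z) \simeq \map(a, \fun(b,Z))$ and $\map(a\otimes b, Z) \simeq \map(a, \underline{\fun}(b,Z))$ in the variable $a$.

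For the projection $a\times b \to b$, I would first treat the case $b = [0]$, that is, the map $a \to [0]$. Since $a$ is a globular sum, it is weakly contractible: $\tau^i_0 a \simeq \ast$. Consequently the constant-diagram functor $W \to \fun(a, W)$ is fully faithful for every $(\infty,2)$-category $W$, and hence induces a monomorphism $\map([0],W) = \tau_0 W \to \tau_0\fun(a,W) = \map(a, W)$ on underlying spaces. Applying this with $W = \fun(b,Z)$ and unwinding the adjunctions identifies the comparison $\map(b,Z) \to \map(a\times b, Z)$ with this constant inclusion; thus it is a monomorphism for all $Z$, and $a\times b \to b$ is an epimorphism. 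Equivalently, $a\to [0]$ is an epimorphism and $-\times b$ preserves pushouts, hence epimorphisms.

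For the comparison $a\otimes b \to a\times b$, the mate of the canonical lax comparison natural transformation $-\otimes b \Rightarrow -\times b$ between left adjoints is a natural transformation $\fun(b,Z) \to \underline{\fun}(b,Z)$ between the corresponding right adjoints, and $f^*$ is obtained by applying $\map(a,-)$ to it. It therefore suffices to prove that this comparison of internal homs is a monomorphism in $(\infty,2)\Cat$, since $\map(a,-)$ preserves monomorphisms. The map $\fun(b,Z) \to \underline{\fun}(b,Z)$ is the identity on objects (both have the functors $b \to Z$ as objects) and includes natural transformations into oplax-natural transformations, with the corresponding inclusions on higher cells. I expect the crux to be the observation that this inclusion selects exactly those oplax cells whose structure $2$-cells are invertible; as invertibility is a property, it cuts out a union of connected components levelwise, so the comparison is a levelwise monomorphism and hence a monomorphism of $(\infty,2)$-categories. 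Granting this, $\map(a, \fun(b,Z)) \to \map(a, \underline{\fun}(b,Z))$ is a monomorphism for every globular sum $a$, which is what we want.

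The main obstacle is this last step: making precise the identification of $\fun(b,Z) \to \underline{\fun}(b,Z)$ with the inclusion of oplax cells carrying invertible structure data, and checking that it is a levelwise monomorphism. As a sanity check one can verify the base case $[1]\otimes[1] \to [1]\times[1]$ by hand, where $\map([1]\times[1],Z) \to \map([1]\otimes[1],Z)$ is the inclusion of the invertible square-fillers into all square-fillers, which is visibly a monomorphism. If one prefers to avoid the internal-hom computation altogether, an alternative is to identify $a\otimes b \to a\times b$ with a localization of $(\infty,2)$-categories, namely the one inverting the Gray interchange $2$-cells, since localizations are automatically epimorphisms.
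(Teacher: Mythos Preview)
Your treatment of the projection $a\times b\to b$ is essentially the paper's: both reduce to $a\to[0]$ via the cartesian product, and both use that this map is the groupoidification of a globular sum. Your detour through ``const is fully faithful'' is correct but slightly roundabout; the reason the constant functor is fully faithful is precisely that $a\to[0]$ is a localization, and once you know that you may as well conclude the epimorphism directly (this is what the paper does, citing \cite[Proposition 2.2.1.50]{loubaton2024categorical}).

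For $a\otimes b\to a\times b$, your alternative suggestion at the end --- that this map is a localization inverting the Gray interchange $2$-cells, hence an epimorphism --- is exactly the paper's argument, which cites \cite[Lemma 1.6.12]{Effectivity} for the localization identification and again \cite[Proposition 2.2.1.50]{loubaton2024categorical} for the conclusion. Your primary approach via internal homs, however, has a genuine circularity: asking that $\fun(b,Z)\to\underline{\fun}(b,Z)$ be a levelwise monomorphism on $\Theta_2$ unwinds, under the adjunctions you wrote down, to asking that $\map(c\times b,Z)\to\map(c\otimes b,Z)$ be a monomorphism for every globular sum $c$ --- which is precisely the statement you are trying to prove. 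Your proposed way out, that the image is cut out by ``structure $2$-cells invertible'', is true, but making it precise amounts to identifying $c\otimes b\to c\times b$ as the localization at those $2$-cells, which brings you back to the paper's argument.
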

\begin{proof}
It follows from \cite[Lemma 1.6.12]{Effectivity} that the map $a \otimes b \to a \times b$ is obtained by localizing all the cells of the shape $x\otimes y$ where $x,y$ are cells of non-negative dimension of respectively $a$ and $b$. The desired result then follows from \cite[Proposition 2.2.1.50]{loubaton2024categorical}.

For the second map, it suffices to show that $a\to [0]$ is an epimorphism as the cartesian product preserves colimits. This map witnesses a groupoidification, so it follows again from \cite[Proposition 2.2.1.50]{loubaton2024categorical}. 
\end{proof}

\begin{construction}\label{cons:funny equation}
We will now construct natural pushout squares
\[
    \begin{tikzcd}
	\tau_0[n]\times ([m]\times [k]) & {[n]\otimes ([m]\times [k])} \\
	\tau_0[n] \times [k] & {[n;m]\otimes [k]},
	\arrow[""{name=0, anchor=center, inner sep=0}, from=1-1, to=1-2]
	\arrow[from=1-1, to=2-1]
	\arrow[from=1-2, to=2-2]
	\arrow[from=2-1, to=2-2]
\end{tikzcd} \quad 
\begin{tikzcd}
 {[k]}\times [m]^\op \times \tau_0[n] & {([k]\times [m]^\op)\otimes[n]} \\
 {[k]} \times \tau_0[n]& {[k]\otimes[n;m]}.
	\arrow[""{name=0, anchor=center, inner sep=0}, from=1-1, to=1-2]
	\arrow[from=1-1, to=2-1]
	\arrow[from=1-2, to=2-2]
	\arrow[from=2-1, to=2-2]
\end{tikzcd}
\]
in $(\infty,2)\Cat$. We will just construct the left one, the construction of the right one is formally dual.

The first step will be to construct the collapse map $[n] \otimes ([m]\times [k]) \to [n;m] \otimes [k]$. Let $\psi : [n]\otimes[m] \to [n;m]$ be the map that appears in the square of \ref{lemma:globular sum as quotient of Gray}. Then we demonstrate that the dashed factorization of $(\infty,2)$-categories displayed in the left-hand side triangle exists:
\[
\begin{tikzcd}
        {[n]} \otimes [m] \otimes [k] \arrow[r, "{\psi \otimes [k]}"]\arrow[d] & {[n;m]} \otimes [k] \\ 
        {[n]} \otimes ([m] \times [k]) \arrow[ur, dashed]
\end{tikzcd} \quad
\Leftrightarrow \quad 
\begin{tikzcd}
        {[n]} \otimes_G [m] \otimes_G [k] \arrow[r, "{\psi \otimes_G [k]}"]\arrow[d] & {[n;m]} \otimes_G [k]. \\ 
        {[n]} \otimes_G ([m] \times [k]) \arrow[ur, dashed]
\end{tikzcd}
\]
This factorization will be unique since the left vertical map is an epimorphism by \ref{lemma:important epis}.
By Maehara's result recalled as \ref{theo:Gray tensor of globular sums}, $[n;m] \otimes [k]$ is 2-gaunt. So, by the adjunction $\Str\dashv N$ of \ref{def:discrete} and the monoidality of $\Str$, a triangle on the left-hand side in $(\infty,2)\Cat$ corresponds precisely to a triangle on the right-hand side in $2\Gaunt$. Let $u$ be a $0$-cell in $[n]$ and $v$ a $2$-cell in $[m]\otimes_G[k]$. Tracing through the construction of $\psi$, one 
sees that $u\otimes v$ is carried to an identity by $\psi \otimes_G [k]$. So it follows from \ref{rem:when factor throug tau1} that the desired dashed factorization exists.

Similar reasoning shows that the produced factorization fits in a canonical commutative diagram 
\[ 
\begin{tikzcd}
	{\tau_0[n] \times [m]\otimes [k]}\arrow[d]\arrow[r] & {[n]\otimes[m]\otimes[k]} \arrow[d] \\
	{\tau_0[n]\times [m]\times [k]}\arrow[d]\arrow[r] & {[n]\otimes([m]\times[k])} \arrow[d] \\
	{\tau_0[n]\otimes [k]} \arrow[r] & {[n;m]\otimes [k]},
\end{tikzcd}
\]
and the bottom square is uniquely determined since the top left and top right maps are epimorphisms. 
\end{construction}

\begin{lemma}
\label{lemma:funny equation}
The squares constructed in \ref{cons:funny equation} are pushout squares.
\end{lemma}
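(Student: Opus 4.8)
The plan is to obtain the left-hand square as the lower half of the vertically stacked diagram built at the end of \ref{cons:funny equation}, to recognize the full outer rectangle of that stack as a pushout gotten by Gray-tensoring a known pushout with $[k]$, and then to extract the bottom square via a vertical reformulation of \ref{lemma:easy diagram chasing}. The right-hand square will follow by transporting the left-hand one along the involution $(-)^\co$, just as it was constructed.

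First I would Gray-tensor the pushout square of \ref{lemma:globular sum as quotient of Gray} with $[k]$. After the identifications $\coprod_{i=0}^n\{i\}\otimes[m]\simeq\tau_0[n]\times[m]$ and $\coprod_{i=0}^n\{i\}\simeq\tau_0[n]$, that square reads
\[
\begin{tikzcd}
\tau_0[n]\times[m] \arrow[r]\arrow[d] & {[n]\otimes[m]} \arrow[d] \\
\tau_0[n] \arrow[r] & {[n;m].}
\end{tikzcd}
\]
Since $(-)\otimes[k]$ preserves colimits by \ref{theo:Gray tensor product preserves colimits}, its image is again a pushout. Using associativity of $\otimes$, the commutation of the finite coproduct $\tau_0[n]\times(-)$ with $(-)\otimes[k]$, and $\tau_0[n]\otimes[k]\simeq\tau_0[n]\times[k]$, the four corners become $\tau_0[n]\times([m]\otimes[k])$, $[n]\otimes([m]\otimes[k])$, $\tau_0[n]\times[k]$ and $[n;m]\otimes[k]$. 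A short naturality check, comparing the two collapse maps over the projection $[m]\to[0]$, identifies this pushout with the outer rectangle of the stacked diagram of \ref{cons:funny equation}.

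Next I would check that the two top vertical maps of the stack are epimorphisms. The right one is $[n]\otimes(-)$ applied to the collapse $[m]\otimes[k]\to[m]\times[k]$, which is an epimorphism by \ref{lemma:important epis}; since $[n]\otimes(-)$ preserves pushouts, and an epimorphism is exactly a map with invertible codiagonal, it sends this map to an epimorphism. The left one is the same collapse multiplied by $\tau_0[n]$, i.e.\ an $(n+1)$-fold coproduct of epimorphisms, which is again an epimorphism. With the outer rectangle a pushout and both top maps epimorphisms, the vertical form of \ref{lemma:easy diagram chasing} forces the bottom square — the left-hand square of \ref{cons:funny equation} — to be a pushout. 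Finally, applying the equivalence $(-)^\co$ together with the identity $(\C\otimes\D)^\co\simeq\D^\co\otimes\C^\co$ carries this pushout to the right-hand square, which is therefore a pushout as well. I expect the only fiddly point to be the bookkeeping that the tensored pushout genuinely coincides with the outer rectangle of the stacked diagram, matching all the structure maps; everything else is a formal consequence of colimit-preservation and \ref{lemma:important epis}.
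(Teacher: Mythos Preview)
Your proof is correct and follows the same approach as the paper, which simply states that the result follows directly from \ref{lemma:globular sum as quotient of Gray} and \ref{lemma:easy diagram chasing}. You have faithfully unpacked this terse argument: Gray-tensoring the pushout of \ref{lemma:globular sum as quotient of Gray} with $[k]$ yields the outer rectangle of the stacked diagram in \ref{cons:funny equation}, the top vertical maps are epimorphisms via \ref{lemma:important epis}, and the (transposed) form of \ref{lemma:easy diagram chasing} then extracts the bottom square; the right-hand square is handled by the same duality $(-)^\co$ that the paper invokes in \ref{cons:funny equation} and \ref{lemma:globular sum as quotient of Gray}.
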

\begin{proof}
    This follows directly from applying \ref{lemma:globular sum as quotient of Gray} and \ref{lemma:easy diagram chasing}. 
\end{proof}

We now have the following intermediate result:

\begin{lemma}
\label{lemma:comparaions step 1}
The desired natural equivalence of \ref{theo:comparaison of gray tensor} exists if $\C$ is an $\infty$-category and $\D$ is an $(\infty,2)$-category.
\end{lemma}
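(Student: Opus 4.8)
The plan is to reduce, via density and cocontinuity, to a single pushout square supplied by \ref{lemma:funny equation} whose corners are Gray tensors of $(\infty,1)$-categories, and then to invoke the already established \ref{lemma:comparaions step 0.5}. First I would record that both bifunctors preserve colimits separately in each variable. For $\otimes$ this is \ref{theo:Gray tensor product preserves colimits}. For $\Gr((-)_h\times(-)_v)$, note that $(-)_h$ and $(-)_v$ preserve colimits, that $\CDbliCat$ is cartesian closed so that $(-)\times\D_v$ is a left adjoint and hence cocontinuous, and that $\Gr$ preserves colimits on $\CDbliCat$ by \ref{remark:Gr and completion}; since the inclusion $\iota:\infty\Cat\to(\infty,2)\Cat$ is itself a left adjoint, $\C\mapsto\Gr(\C_h\times\D_v)$ is also cocontinuous as a functor on $\infty\Cat$. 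Using the density of $\Delta$ in $\infty\Cat$ and of $[\Delta;\Delta]$ in $(\infty,2)\Cat$, I may therefore reduce the asserted natural equivalence to the objects $\C=[k]$ and $\D=[n;m]$.

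On these objects, the right-hand pushout square of \ref{cons:funny equation} exhibits $[k]\otimes[n;m]$ as the pushout in $(\infty,2)\Cat$ of the span $[k]\times\tau_0[n]\leftarrow[k]\times[m]^\op\times\tau_0[n]\to([k]\times[m]^\op)\otimes[n]$. The decisive feature is that every corner of this span is a Gray tensor of two $(\infty,1)$-categories: the two left-hand corners are products, i.e.\ the Gray tensors of $[k]\times\tau_0[n]$ and of $[k]\times[m]^\op\times\tau_0[n]$ with the unit $[0]$, while the right-hand corner is the Gray tensor of the $\infty$-category $[k]\times[m]^\op$ with the simplex $[n]$. Hence \ref{lemma:comparaions step 0.5} identifies each corner naturally with the corresponding value of $\Gr((-)_h\times(-)_v)$.

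It then remains to exhibit a matching pushout on the side of $\Gr((-)_h\times(-)_v)$ and to check that the two spans agree under these equivalences. For this I would feed the pushout presentation of $[n;m]_v$ from \ref{not: vertical horizontal inclusion} into the cocontinuous functor $\Gr([k]_h\times(-))$. Unwinding the representables, the coproduct of the terms $[k]_h\times\langle 0,0\rangle$ computes $[k]\times\tau_0[n]$ after applying $\Gr$; the term $\langle m,0\rangle^\hop\simeq([m]^\op)_h$ yields $[k]\times[m]^\op\times\tau_0[n]$; and the term $\langle m,n\rangle^\hop$ contributes $([k]\times[m]^\op)\otimes[n]$, the two horizontal directions combining into the first Gray factor. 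This produces a pushout square with precisely the corners of \ref{cons:funny equation}. The main obstacle, and the step I would treat most carefully, is verifying that the structure maps of this span coincide with those of the funny-equation span; this is a bookkeeping check through the horizontal-opposite duality $(-)^\hop$ and the collapse maps built in \ref{cons:funny equation}, after which the universal property of the pushout delivers the desired natural equivalence $\Gr([k]_h\times[n;m]_v)\simeq[k]\otimes[n;m]$, and cocontinuity reassembles it into the full statement.
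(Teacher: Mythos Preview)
Your proposal is correct and follows essentially the same route as the paper: reduce by density and cocontinuity to $\C=[k]$ and $\D=[n;m]$, present $[k]_h\times[n;m]_v$ as the pushout coming from the defining pushout of $[n;m]_v$ in \ref{not: vertical horizontal inclusion}, identify the three corners via \ref{lemma:comparaions step 0.5} with the span of \ref{cons:funny equation}, and conclude using \ref{lemma:funny equation}. The paper is slightly terser about the cocontinuity and the matching of structure maps, but the argument is the same.
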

\begin{proof}
By density, we may reduce to the case that $\C = [k] \in \Delta$ and $\D = [n;m] \in [\Delta;\Delta]$. It follows from the construction of the vertical and horizontal inclusion
that $\C_h\times \D_v$ is naturally the colimit of the span
\[
\begin{tikzcd}
    {[k]_h} \times \tau_0[n]_v & \arrow[l]  ([k] \times [m]^\op)_h \times \tau_0[n]_v \arrow[r] & ([k] \times [m]^\op)_h \times [n]_v.
\end{tikzcd}
\]
On account of \ref{lemma:comparaions step 0.5}, the functor $\Gr$ carries this to the colimit of the span
\[
\begin{tikzcd}
     {[k]} \times \tau_0[n] & \arrow[l]  [k] \times [m]^\op \times \tau_0[n] \arrow[r] & ([k] \times [m]^\op)\otimes [n],
\end{tikzcd}
\]
and this is computed by $[k] \otimes [n;m]$ on account of \ref{lemma:funny equation}.
\end{proof}

To prove \ref{theo:comparaison of gray tensor}, we need some additional combinatorial input.

\begin{lemma}
\label{lemma:crush product}
Let $n,m,k$ be non negative integers.
Then both the right and outer square in the canonical commutative diagram
\[\begin{tikzcd}
	{\tau_1[n;m]\otimes [k]} & {\tau_1[n;m]\times [k]} & {\tau_1[n;m]} \\
	{[n;m] \otimes [k]} & {[n;m] \times [k]} & {[n;m]}
	\arrow[from=1-1, to=1-2]
	\arrow[from=1-1, to=2-1]
	\arrow[from=1-2, to=1-3]
	\arrow[from=1-2, to=2-2]
	\arrow[from=1-3, to=2-3]
	\arrow[from=2-1, to=2-2]
	\arrow[from=2-2, to=2-3]
\end{tikzcd}\]
are pushout squares in $(\infty,2)\Cat$.
\end{lemma}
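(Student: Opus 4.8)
The plan is to exploit the fact that Lemma~\ref{lemma:easy diagram chasing} is tailored precisely to deduce the right square from the outer square: once we know that the outer square is a pushout and that the two horizontal maps of the left square are epimorphisms, the right square is automatically a pushout. So I would organize the proof around establishing the outer square together with the two epimorphism conditions, and obtain the right square for free at the end.

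First I would dispatch the epimorphisms. The bottom comparison $[n;m]\otimes[k]\to[n;m]\times[k]$ is an epimorphism directly by \ref{lemma:important epis}. For the top comparison $\tau_1[n;m]\otimes[k]\to\tau_1[n;m]\times[k]$, the object $\tau_1[n;m]$ is no longer a globular sum, but the proof of \ref{lemma:important epis} only used that $A\otimes B\to A\times B$ is obtained by localizing the positive-dimensional cells $x\otimes y$ (via \cite[Lemma 1.6.12]{Effectivity}), hence is a localization and so an epimorphism by \cite[Proposition 2.2.1.50]{loubaton2024categorical}. This argument applies verbatim to $A=\tau_1[n;m]$ and $B=[k]$.

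The heart of the matter is to show that the outer square exhibits
$$ [n;m]\simeq \tau_1[n;m]\sqcup_{\tau_1[n;m]\otimes[k]}\bigl([n;m]\otimes[k]\bigr). $$
Writing $X_{n,m,k}$ for the right-hand pushout, I would reduce to $n=m=k=1$ by three successive reductions. Since $-\otimes[k]$ is cocontinuous and pushouts commute with colimits, and since in the $k$-direction the factor $\tau_1[n;m]$ is constant while the spine presentation $[k]=[1]\sqcup_{[0]}\dotsb\sqcup_{[0]}[1]$ is a \emph{connected} colimit, the assignment $k\mapsto X_{n,m,k}$ carries this spine to the corresponding gluing of the $X$'s; as $X_{n,m,0}=\tau_1[n;m]\sqcup_{\tau_1[n;m]}[n;m]=[n;m]$, this reduces the claim to $k=1$. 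The same connectedness mechanism applies to the two Segal presentations $[n;m]=[1;m]\sqcup_{[0]}\dotsb\sqcup_{[0]}[1;m]$ and $[1;m]=[1;1]\sqcup_{[1]}\dotsb\sqcup_{[1]}[1;1]$: one checks that $\tau_1[n;m]$ (respectively $\tau_1[1;m]$) admits the analogous presentation as the explicit path-like $1$-category that it is, that the junction values of $X$ are the expected $[0]$ and $[1]$, and thereby reduces first to $n=1$ and then to $m=1$.

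It then remains to verify the base case $[1;1]\simeq \tau_1[1;1]\sqcup_{\tau_1[1;1]\otimes[1]}([1;1]\otimes[1])$. Here every object is gaunt, so, following the strategy of \ref{lemma:square as a colimit}, I would compute the pushout in $\PSh_{\mathrm{Set}}(\Theta_2)$ and invoke \cite[Lemma 2.1.1.5]{loubaton2024categorical} to transfer it to $(\infty,2)\Cat$. I expect this base case to be the main obstacle: one must confirm that collapsing the $[1]$-direction on the $1$-core $\tau_1[1;1]\otimes[1]$ has exactly the effect of identifying, through the Gray interchange relation, the two copies of the unique $2$-cell of $[1;1]$ that sit over the two endpoints of $[1]$, so that the quotient recovers $[1;1]$ and nothing more. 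Finally, with the outer square established at all $(n,m,k)$ and the two epimorphisms in hand, I would conclude that the right square is a pushout by a single application of \ref{lemma:easy diagram chasing}.
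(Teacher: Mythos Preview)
Your reduction of the right square to the outer square via \ref{lemma:easy diagram chasing} and the epimorphism arguments matches the paper's organization.

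Your treatment of the outer square diverges from the paper's and contains a genuine gap at the base case. The paper does not reduce in $n,m,k$. Instead it builds a commutative cube (using \ref{cons:funny equation}) whose front face is the outer square, whose bottom face is a pushout by \ref{lemma:funny equation}, and whose back-to-front maps on the top face are epimorphisms. A second application of \ref{lemma:easy diagram chasing} then reduces the problem to the back face, which after undoing the functor $[n]\otimes(-)$ is the elementary square
\[
\begin{tikzcd}
\tau_0[m]\times[k] \arrow[r]\arrow[d] & \tau_0[m] \arrow[d] \\
{[m]\times[k]} \arrow[r] & {[m]}
\end{tikzcd}
\]
in $\infty\Cat$, easily checked for $m=k=1$. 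This leverages the already-established \ref{lemma:funny equation} and avoids any direct $2$-categorical computation.

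The gap in your approach is the base case itself. The pushout $\tau_1[1;1]\sqcup_{\tau_1[1;1]\otimes[1]}([1;1]\otimes[1])$ computed in $\PSh_{\mathrm{Set}}(\Theta_2)$ is \emph{not} the nerve of $[1;1]$, so the strategy of \ref{lemma:square as a colimit} does not apply. Evaluate at the level $[1;1]\in\Theta_2$: the inclusion $\tau_1[1;1]\otimes[1]\hookrightarrow[1;1]\otimes[1]$ misses precisely the $2$-cells involving $\alpha$ (namely $\alpha\otimes 0$, $\alpha\otimes 1$, their two whiskerings along the $[1]$-direction, and one composite), so the levelwise set pushout leaves these five cells untouched and pairwise distinct, rather than collapsing them to a single $\alpha$. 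The Gray interchange equation you correctly identify does force $\alpha\otimes 0=\alpha\otimes 1$ once the interchange $2$-cells become identities, but this is a relation among \emph{composites} of $2$-cells, visible in the strict $2$-categorical pushout but invisible to the levelwise presheaf pushout; the identification is only restored after the non-trivial localization $\PSh(\Theta_2)\to(\infty,2)\Cat$, which is exactly the step your method was meant to bypass. To salvage the base case you would need a separate argument that the strict $2$-categorical pushout here computes the $(\infty,2)$-categorical one. The paper's cube avoids this entirely by trading the $2$-categorical collapse for one in $\infty\Cat$.
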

\begin{proof}
Recall that \ref{lemma:important epis} implies that the horizontal maps in the left-hand  square are epimorphisms, so that it suffices that the outer square is a pushout square by \ref{lemma:easy diagram chasing}.

Using \ref{cons:funny equation}, we may construct a canonical commutative cube
\[
	\begin{tikzcd}[column sep = 0pt, row sep = small]
			[n] \otimes (\tau_0[m] \times [k]) \arrow[dr]\arrow[rr]\arrow[dd] && {[n]} \times \tau_0[m] \arrow[dr]\arrow[dd] \\
			& |[alias=f]|\tau_1[n;m] \otimes [k] \arrow[rr, crossing over] && \tau_1[n;m] \arrow[dd]\\ 
			{[n]} \otimes ([m] \times [k]) \arrow[dr]\arrow[rr] && {[n]}\otimes [m]\arrow[dr] \\
			& |[alias=t]| {[n;m]} \otimes [k] \arrow[rr] &&  {[n;m]},
			\arrow[from=f,to=t, crossing over]
	\end{tikzcd}
\]
so that the front face is precisely the square we are interested in. The bottom face is a pushout on account of \ref{lemma:funny equation}. The arrows in the top face that go from the back to the front face, are both epimorphisms. By \ref{lemma:easy diagram chasing}, it suffices to check that the back face is a pushout square. But this follows from the fact that the square 
\[
    \begin{tikzcd}
        \tau_0[m] \times [k] \arrow[r]\arrow[d] &  \tau_0[m] \arrow[d] \\
        {[m]} \times [k] \arrow[r] & {[m]},
    \end{tikzcd}
\]
is a pushout in $\infty\Cat$, as can be directly verified by reducing to $m=k=1$. 
\end{proof}

\begin{lemma}

\label{lemma:comparaions step 3}
Let $n,m,k,l$ be non-negative integers.
Then the canonical commutative square
\[\begin{tikzcd}
	{\tau_0[n]_h \times [m]_v \times\tau_1[k;l]_v} & {\langle n,m \rangle \times [k;l]_v} \\
	{\tau_0[n]_h\times\tau_1[k;l]_v} & {[n;m]_h\times[k;l]_v}
	\arrow[""{name=0, anchor=center, inner sep=0}, from=1-1, to=1-2]
	\arrow[""{name=0p, anchor=center, inner sep=0}, phantom, from=1-1, to=1-2, start anchor=center, end anchor=center]
	\arrow[from=1-1, to=2-1]
	\arrow[from=1-2, to=2-2]
	\arrow[from=2-1, to=2-2]
\end{tikzcd}\]
is a pushout in $\CDbliCat$.
\end{lemma}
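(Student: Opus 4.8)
The plan is to realize the square as built from the defining pushout of $[n;m]_h$ in \ref{not: vertical horizontal inclusion} together with the crushing pushout of \ref{lemma:crush product}. First I would record the identifications $\tau_0[n]_h \times [m]_v \simeq \coprod_{i=0}^n \langle 0,m\rangle$ and $\tau_0[n]_h \simeq \coprod_{i=0}^n \langle 0,0\rangle$, using that $(-)_h$ preserves colimits and that $\tau_0[n]$ is the discrete space on $n+1$ points, together with $\langle 0,m\rangle \simeq [m]_v$ and $\langle 0,0\rangle \simeq [0]$. Since $\CDbliCat$ is cartesian closed, the functor $(-) \times [k;l]_v$ preserves colimits, so applying it to the defining pushout square for $[n;m]_h$ (which is a pushout in $\CDbliCat$, all of its corners being complete) yields a pushout square with corners $\coprod_i \langle 0,m\rangle \times [k;l]_v$, $\langle n,m\rangle \times [k;l]_v$, $\tau_0[n]_h \times [k;l]_v$ and $[n;m]_h \times [k;l]_v$. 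This agrees with the square of \ref{lemma:comparaions step 3} in its right-hand column, and differs only in the left-hand column, where $[k;l]_v$ is replaced by $\tau_1[k;l]_v$.

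To bridge this difference I would interpolate the two squares by the three-column diagram
\[
\begin{tikzcd}[column sep = small]
	{\coprod_i \langle 0,m\rangle \times \tau_1[k;l]_v} \arrow[r]\arrow[d] & {\coprod_i \langle 0,m\rangle \times [k;l]_v} \arrow[r]\arrow[d] & {\langle n,m\rangle \times [k;l]_v} \arrow[d] \\
	{\tau_0[n]_h \times \tau_1[k;l]_v} \arrow[r] & {\tau_0[n]_h \times [k;l]_v} \arrow[r] & {[n;m]_h \times [k;l]_v}
\end{tikzcd}
\]
whose right square is the pushout just produced and whose outer square is the one we want. By the pasting law for pushouts it then suffices to prove that the left square is a pushout. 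The left square is the coproduct over $i$ of the square
\[
\begin{tikzcd}
	{[m]_v \times \tau_1[k;l]_v} \arrow[r]\arrow[d] & {[m]_v \times [k;l]_v} \arrow[d] \\
	{\tau_1[k;l]_v} \arrow[r] & {[k;l]_v}
\end{tikzcd}
\]
where the vertical maps are induced by the projection $[m]_v \to [0]$ and the horizontal maps by the counit $\tau_1[k;l]_v \to [k;l]_v$. As a coproduct of pushout squares is a pushout square, it remains to treat this single square.

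For this I would use that the vertical inclusion preserves finite products, so that $[m]_v \times \tau_1[k;l]_v \simeq ([m] \times \tau_1[k;l])_v$ and $[m]_v \times [k;l]_v \simeq ([m] \times [k;l])_v$; this is immediate from the cell-level description, as the $2$-cells of $\C_v$ are exactly those of $\C$. Consequently the square above is the image under $(-)_v$ of the square
\[
\begin{tikzcd}
	{[m] \times \tau_1[k;l]} \arrow[r]\arrow[d] & {[m] \times [k;l]} \arrow[d] \\
	{\tau_1[k;l]} \arrow[r] & {[k;l]}
\end{tikzcd}
\]
in $(\infty,2)\Cat$. This last square is precisely the right-hand square of \ref{lemma:crush product} after transposing and commuting the two factors of the product, and is therefore a pushout; since $(-)_v$ preserves colimits, the square in $\CDbliCat$ is a pushout as well, completing the argument.

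The main obstacle is the step just taken: one must resist computing the relevant pushout in presheaves, where collapsing the factor $[m]_v$ over the genuine $2$-cells of $[k;l]$ produces a priori distinct parallel $2$-cells. It is essential that the pushout is formed in $\CDbliCat$, where the vertical composition and completeness conditions identify these cells — this is exactly the phenomenon encoded by \ref{lemma:crush product}, and it is the reason the statement is phrased in $\CDbliCat$. The only other point requiring care is the product-preservation of the vertical inclusion, on which the whole reduction rests and which should be justified (rather than merely asserted) from the defining pushout squares of \ref{not: vertical horizontal inclusion}.
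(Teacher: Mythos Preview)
Your proposal is correct and follows essentially the same approach as the paper's proof. Both arguments expand the square into the same three-column diagram, identify the right square as the defining pushout of $[n;m]_h$ multiplied by $[k;l]_v$, and reduce the left square to \ref{lemma:crush product} transported along the colimit-preserving vertical inclusion $(-)_v$. The paper is simply terser: it asserts that the left square ``follows directly from \ref{lemma:crush product} and the fact that the vertical inclusion $(-)_v : (\infty,2)\Cat \to \CDbliCat$ preserves colimits,'' whereas you spell out the coproduct decomposition and isolate the product-preservation of $(-)_v$ as the hinge of the argument. That identification $([m]\times[k;l])_v \simeq [m]_v \times [k;l]_v$ is implicitly used in the paper as well, so your flagging it as the point requiring care is apt rather than a divergence.
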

\begin{proof}
We may expand the commutative square to the commutative diagram
\[\begin{tikzcd}
	{\tau_0[n]_h\times [m]_v \times\tau_1[k;l]_v} & {\tau_0[n]_h \times [m]_v \times[k;l]_v} & {\langle n,m \rangle \times [k;l]_v} \\
	{\tau_0[n]_h\times\tau_1[k;l]_v} & {\tau_0[n]_h\times[k;l]_v} & {[n;m]_h\times[k;l]_v,}
	\arrow[from=1-1, to=1-2]
	\arrow[from=1-1, to=2-1]
	\arrow[from=1-2, to=1-3]
	\arrow[from=1-2, to=2-2]
	\arrow[from=1-3, to=2-3]
	\arrow[from=2-1, to=2-2]
	\arrow[from=2-2, to=2-3]
\end{tikzcd}\]
of which the outer square is of our interest. 
Note that the right square is a pushout square. Thus we must demonstrate that the left square is a pushout square. This follows directly from \ref{lemma:crush product} and the fact that the vertical inclusion $(-)_v : (\infty,2)\Cat \to \CDbliCat$ preserves colimits.
\end{proof}

We have the following extension of \ref{lemma:funny equation}:

\begin{lemma}
\label{lemma:comparaions step 2}
Suppose that $\P$ is a double $\infty$-category, and $\C$ an $\infty$-category.
Then there exists a natural equivalence
$$\textstyle   \P_0 \bigcup_{\P_0\times \C} \Gr(\P \times \C_v) \simeq \P_0\bigcup_{\C^{\op}\otimes \P_0}\C^{\op}\otimes \Gr(\P)$$
of $(\infty,2)$-categories.
\end{lemma}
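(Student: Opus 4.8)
The plan is to reduce the statement, by density and cocontinuity, to the case where $\P$ is a representable double $\infty$-category $\langle n,m\rangle$ and $\C$ is a simplex $[k]$, and then to recognise both sides as two different pushout presentations of the single object $[n;k]\otimes[m]$. Since $|-|:\DbliCat\to\Filt$ is a left adjoint (\ref{cons: realization-nerve adjunction}), both $\P\mapsto\P_0$ and $\P\mapsto\Gr(\P)$ preserve colimits; as $\Gr$ is a left adjoint, the Gray tensor product is cocontinuous in each variable (\ref{theo:Gray tensor product preserves colimits}), and $\DbliCat$ is cartesian closed, the functors $\P\mapsto\Gr(\P\times\C_v)$ and $\P\mapsto\C^\op\otimes\Gr(\P)$ are cocontinuous too. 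Because pushouts commute with colimits and the structure maps are natural, both sides preserve colimits in $\P$, and the representables being dense in $\DbliCat$, it suffices to treat $\P=\langle n,m\rangle$. For the variable $\C$ the constant term $\P_0$ obstructs absolute cocontinuity, but both sides receive a canonical map from $\P_0$ and send $\C=\emptyset$ to $\P_0$, so they define functors $\infty\Cat\to(\infty,2)\Cat_{\P_0/}$ into the coslice that preserve colimits (using that $(-)_v$, $(-)^\op$, $\times$ and $\otimes$ are cocontinuous, together with \ref{remark:Gr and completion}). As $\Delta$ is dense in $\infty\Cat$, it then suffices to treat $\C=[k]$; extending the base-case equivalence first along $\C\in\infty\Cat$ for each representable $\P$, and then along $\P\in\DbliCat$ for each $\C$, assembles everything into a single natural equivalence.

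For the base case, \ref{cons: realization-nerve adjunction} gives $\P_0\simeq\tau_0[n]\times[m]$ and $\Gr(\langle n,m\rangle)\simeq[n]\otimes[m]$. Using the identification $\langle n,m\rangle\simeq[n]_h\times[m]_v$ (immediate from the Yoneda lemma) and the fact that $[m]_v\times[k]_v\simeq([m]\times[k])_v$ (readily verified), \ref{lemma:comparaions step 0.5} yields $\Gr(\langle n,m\rangle\times[k]_v)\simeq\Gr([n]_h\times([m]\times[k])_v)\simeq[n]\otimes([m]\times[k])$. Hence the left-hand side is the pushout of $\P_0\leftarrow \P_0\times[k]=\tau_0[n]\times([m]\times[k])\rightarrow[n]\otimes([m]\times[k])$, in which the left leg collapses the factor $[k]$. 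Applying \ref{lemma:funny equation} with the symbols $m$ and $k$ interchanged — legitimate because $[n]\otimes([m]\times[k])\simeq[n]\otimes([k]\times[m])$ by commutativity of the cartesian product — identifies this pushout with $[n;k]\otimes[m]$.

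It remains to identify the right-hand side with the same object. For this I would apply the cocontinuous functor $(-)\otimes[m]$ to the dual presentation of the globular sum coming from the right-hand square of \ref{lemma:globular sum as quotient of Gray}, namely $[n;k]\simeq([k]^\op\otimes[n])\cup_{\tau_0[n]\times[k]^\op}\tau_0[n]$, whose left leg collapses $[k]^\op$. Since $(\tau_0[n]\times[k]^\op)\otimes[m]\simeq\tau_0[n]\times([k]^\op\otimes[m])\simeq[k]^\op\otimes\P_0$ and $\tau_0[n]\otimes[m]\simeq\P_0$, this exhibits $[n;k]\otimes[m]$ as the pushout of $\P_0\leftarrow [k]^\op\otimes\P_0\rightarrow[k]^\op\otimes[n]\otimes[m]$, which is exactly the right-hand side of the lemma (note $[k]^\op\otimes[n]\otimes[m]\simeq\C^\op\otimes\Gr(\langle n,m\rangle)$). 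Both sides are thus canonically equivalent to $[n;k]\otimes[m]$, naturally in $[n],[m],[k]$.

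The main obstacle is bookkeeping rather than anything conceptual. One must check that the two pushout presentations of $[n;k]\otimes[m]$ produced above have legs matching the structure maps dictated by the lemma — on the left, the projection $\P_0\times\C\to\P_0$ and the realization unit $\P_0\times\C\to\Gr(\P\times\C_v)$; on the right, the maps induced by $\C^\op\to[0]$ and by the unit $\P_0\to\Gr(\P)$ — which amounts to tracing through \ref{cons: realization-nerve adjunction} and the collapse maps of \ref{cons:funny equation}, where all the globular sums and their Gray tensors in sight are gaunt (\ref{theo:Gray tensor of globular sums}). One must also confirm that the two density extensions (first along $\C$, then along $\P$) are compatible and glue to a single natural equivalence; this is routine given the cocontinuity established in the first step.
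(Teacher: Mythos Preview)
Your proposal is correct and follows essentially the same route as the paper: reduce by density and cocontinuity to $\P=\langle n,m\rangle$, $\C=[k]$, and identify both pushouts with $[n;k]\otimes[m]$ via \ref{lemma:funny equation} (with $m$ and $k$ swapped) on the left and \ref{lemma:globular sum as quotient of Gray} tensored with $[m]$ on the right. Your treatment is in fact slightly more careful than the paper's in two respects: you make explicit the passage to the coslice $(\infty,2)\Cat_{\P_0/}$ to handle the constant term in the variable $\C$ (the paper simply asserts cocontinuity in both variables), and you spell out the identification $\Gr(\langle n,m\rangle\times[k]_v)\simeq[n]\otimes([m]\times[k])$ via \ref{lemma:comparaions step 0.5} where the paper just writes down the answer.
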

\begin{proof}
    Both sides are indexed by functors $\DbliCat \times \infty\Cat \to (\infty,2)\Cat$ that preserve colimits in each variable. Hence, by density, we may reduce to the case that $\P = \langle n,m \rangle$ and $\C = [k]$. In this case, the inclusion $\P_0 \to \Gr(\P)$ of the vertical $\infty$-category is given by the obvious map $\tau_0[n] \times [m] \to [n]\otimes[m]$. Since $\Gr$ preserves colimits, the left-hand side is computed by 
    $$\textstyle \tau_0[n]\times [m] \bigcup_{\tau_0[n]\times ([m]\times[k])}[n]\otimes([m]\times [k]).$$
    This pushout is computed by $[n;k]\otimes[m]$ by \ref{lemma:funny equation}.
    The right-hand part of the equation corresponds to the pushout 
    $$\textstyle \tau_0[n]\times [m] \bigcup_{[k]^{\op}\otimes \tau_0[n]\otimes[m]}[k]^{\op}\otimes [n]\otimes[m].$$
    As the Gray tensor product commutes with colimits in both variables, \ref{lemma:globular sum as quotient of Gray} implies that this pushout is also  naturally equivalent to $[n;k]\otimes[m]$.
\end{proof}

We are now ready to complete the proof of \ref{theo:comparaison of gray tensor}.

\begin{proof}[Proof of \ref{theo:comparaison of gray tensor}]
The unicity follows from \ref{prop:auto of gray}.
By density, we may reduce to the case where $\C$ is of shape $[n;m] \in [\Delta;\Delta]$ and $\D$ of shape $[k;l]\in [\Delta;\Delta]$. We set $\P:= [n]_h\times [k;l]_v$. Note that the inclusion $(\P_0)_v \to \P$ is equivalent to the inclusion $\tau_0[n]_h \times \tau_1 [k;l]_v \to [n]_h \times [k;l]_v$.
We may use \ref{lemma:comparaions step 3} and \ref{remark:Gr and completion} to write  $\Gr(\C_h \times \D_v)$ as the colimit of the natural span
\[\begin{tikzcd}
	{\P_0} & {\P_0 \times [m]} & {\Gr(\P \times [m]_v)}.
	\arrow[from=1-2, to=1-1]
	\arrow[from=1-2, to=1-3]
\end{tikzcd}\]

It follows from \ref{lemma:comparaions step 1} and \ref{lemma:comparaions step 2} that there is a natural equivalence between $\Gr(\C_h \times \D_v)$ and the colimit of the span
\[\begin{tikzcd}
	\tau_0[n] \times \tau_{1}[k;l] & {[m]}^{\op}\otimes \tau_0[n] \otimes \tau_1[k;l] & {[m]^{\op}\otimes[n]\otimes [k;l]}.
	\arrow[from=1-2, to=1-1]
	\arrow[from=1-2, to=1-3]
\end{tikzcd}\]
By \ref{lemma:crush product}, this colimit is equivalent to the colimit of the span
\[\begin{tikzcd}
	{\tau_0[n]\times [k;l]} & {[m]^{\op}\otimes\tau_0[n]\otimes [k;l]} & {[m]^{\op}\otimes[n]\otimes[k;l]}.
	\arrow[from=1-1, to=1-2]
	\arrow[from=1-2, to=1-3]
\end{tikzcd}\]
We may now apply \ref{lemma:globular sum as quotient of Gray} to conclude that $\Gr(\C_h \times \D_v)$ is computed by $\C \otimes \D = [n;m]\otimes [k;l]$. Since all steps in the comparison depended naturally on $\C$ and $\D$, the naturality of the comparison is also witnessed.
\end{proof}

\nocite{*}
\bibliographystyle{amsalpha}
\bibliography{gr}

\end{document}